\title[Touchard-Riordan formulas, T-fractions, and Jacobi's triple product identity]{Touchard-Riordan formulas, T-fractions, \\ and Jacobi's triple product identity}
\author{Matthieu Josuat-Verg\`es}
\address{Fakultät für Mathematik, Universität Wien, AUSTRIA}
\email{matthieu.josuat-verges@univie.ac.at}
\author{Jang Soo Kim}
\address{School of Mathematics, University of Minnesota, USA}
\email{kimjs@math.umn.edu}
\thanks{Both authors were supported by the French National Research Agency ANR, grant ANR08-JCJC-0011.
   Moreover the first author was supported by the Austrian Science foundation FWF, START grant Y463.}
\subjclass[2000]{Primary: 05A19, 30B70, Secondary: 05A15, 33D45}
\keywords{Jacobi's triple product identity, continued fractions, Euler numbers, Genocchi numbers}
\date{}
\newtheorem{thm}{Theorem}[section]
\newtheorem{lem}[thm]{Lemma}
\newtheorem{prop}[thm]{Proposition}
\theoremstyle{definition}
\newtheorem{defn}{Definition}
\newtheorem{prob}{Problem}
\theoremstyle{remark}
\newtheorem{remark}{Remark}
\newcommand{\cmo}{\genfrac{}{}{0pt}{}{}{-}}
\newcommand{\cpo}{\genfrac{}{}{0pt}{}{}{+}}
\newcommand{\y}{y^{-1}}
\newcommand\flr[1]{\left\lfloor #1\right\rfloor}
\newcommand\qint[1]{\left[ #1\right]_q}
\newcommand\yqint[1]{\left[ #1\right]_{y,q}}
\newcommand\yiqint[1]{\left[ #1\right]_{y^{-1},q}}
\newcommand\A{\mathcal{A}}
\newcommand\B{\mathcal{B}}
\newcommand\Gg{\mathcal{G}}
\newcommand\J{\mathcal{J}}
\renewcommand\U{\mathcal{U}}
\newcommand\V{\mathcal{V}}
\newcommand\D{\mathcal{D}}
\newcommand\wt{\operatorname{wt}}
\newcommand\one{\mathbf{1}}
\newcommand\zero{\mathbf{0}}
\newcommand\norm[1]{\lVert #1\rVert}
\newcommand\Fill{\mathbf{Fill}}
\newcommand\Shrink{\mathbf{Shrink}}
\newcommand\Stretch{\mathbf{Stretch}}
\newcommand\Remove{\mathbf{Remove}}
\newcommand\Ascend{\mathbf{Ascend}}
\newcommand\Descend{\mathbf{Descend}}
\def\dk{\delta_k}
\def\dkc.{$\delta_k$-configuration}
\def\dkpc.{$\delta_k^+$-configuration}
\def\dkmc.{$\delta_{k}^-$-configuration}
\def\HDp#1{\mathcal{H}\Delta_{#1}^+}
\def\Dp#1{\Delta_{#1}^+}
\def\Dm#1{\Delta_{#1}^-}
\def\ywt.{$(y,q)$-weight}
\newcommand\SCH{\mathcal{S}}
\newcommand\MSCH{\overline{\mathcal{S}}}
\newcommand\MD{\overline{\mathcal{D}}}
\newcommand\OP{\mathcal{OP}}
\def\sch.{Schr\"oder}
\def\cell(#1,#2)[#3]{
\ax=#2 \ay=#1
\multiply\ay by-1
\bx=\ax \by=\ay
\cx=\ax \cy=\ay
\dx=\ax \dy=\ay
\advance\bx by-1
\advance\dy by1
\advance\cx by-1
\advance\cy by1
\psline (\dx,\dy)(\ax,\ay)(\bx,\by)
\rput(\number\cx.5,
\ifnum\cy=0 -0.5\else\number\cy.5\fi){#3}
}
\def\gcell(#1,#2)[#3]{
\ax=#2 \ay=#1
\multiply\ay by-1
\bx=\ax \by=\ay
\cx=\ax \cy=\ay
\dx=\ax \dy=\ay
\advance\bx by-1
\advance\dy by1
\advance\cx by-1
\advance\cy by1
\psframe[linestyle=none,fillstyle=solid,fillcolor=gray!40!white](\ax,\ay)(\cx,\cy)
\rput(\number\cx.5,
\ifnum\cy=0 -0.5\else\number\cy.5\fi){#3}
}
\def\Gcell(#1,#2)[#3]{
\gcell(#1,#2)[#3] \cell(#1,#2)[#3]
}
\def\psrow(#1,#2){\multido{\i=1+1}{#2}{\cell(#1,\i)[]}}
\def\psdk#1{
\psline(#1,0)(0,0)(0,-#1)
\multido{\n=1+1}{#1}{
  \ax=#1 \advance\ax by 1
  \advance\ax by -\n
  \psrow(\n,\ax)}
}
\def\rowseg#1[#2]{\rput(0,-#1){\rput(0,.5){
      \psline[linewidth=1.5pt](0,0)(#2,0)}}}
\def\rowarr#1[#2]{\rput(0,-#1){\rput(0,.5){
      \psline[linewidth=1.5pt,arrowsize=.5, arrowlength=.6]{->}(0,0)(#2,0)}}}
\def\colseg#1[#2]{\rput(#1,0){\rput(-.5,0){
      \psline[linewidth=1.5pt](0,0)(0,-#2)}}}
\def\colarr#1[#2]{\rput(#1,0){\rput(-.5,0){
      \psline[linewidth=1.5pt,arrowsize=.5, arrowlength=.6]{->}(0,0)(0,-#2)}}}
\def\harrow(#1,#2)[#3]{\rput(#2,-#1){\rput(-1,.5){
      \psline[linewidth=1.5pt,arrowsize=.5, arrowlength=.6]{->}(0,0)(#3,0)}}}
\def\varrow(#1,#2)[#3]{\rput(#2,-#1){\rput(-.5,1){
      \psline[linewidth=1.5pt,arrowsize=.5, arrowlength=.6]{->}(0,0)(0,-#3)}}}
\def\hzero(#1,#2){\rput(#2,-#1){\rput(-1,.5){\psarc*(0,0){.2}{-90}{90}}}}
\def\vzero(#1,#2){\rput(#2,-#1){\rput(-.5,1){\psarc*(0,0){.2}{180}{0}}}}
\def\hzerow(#1,#2){\rput(#2,-#1){\rput(-1,.5){\psarc(0,0){.2}{-90}{90}}}}
\def\vzerow(#1,#2){\rput(#2,-#1){\rput(-.5,1){\psarc(0,0){.2}{180}{0}}}}
\def\UP(#1,#2)[#3]{\rput(#1,#2){\psline(0,0)(1,1) 
% \rput(0,1){\small #3}
}}
\def\DW(#1,#2)[#3]{\rput(#1,#2){\psline(0,0)(1,-1) 
% \rput(1,0){\small #3}
}}
\def\MUP(#1,#2)[#3]{\rput(#1,#2){\psline[linewidth=2pt](0,0)(1,1)
    % \rput(0,1){\small #3}
}}
\def\MDW(#1,#2)[#3]{\rput(#1,#2){\psline[linewidth=2pt](0,0)(1,-1)
    % \rput(1,0){\small #3}
}}
\begin{document}

\begin{abstract}
Touchard-Riordan--like formulas are some expressions appearing in enumeration problems and as moments
of orthogonal polynomials. We begin this article with a new combinatorial approach to prove these
kind of formulas, related with integer partitions. This gives a new perspective on the original result
of Touchard and Riordan. But the main goal is to give a combinatorial proof of a 
Touchard-Riordan--like formula for $q$-secant numbers discovered by the first author.
An interesting limit case of these objects can be directly interpreted in terms of partitions, so that we
obtain a connection between the formula for $q$-secant numbers, and a particular case of Jacobi's
triple product identity.

Building on this particular case, we obtain a ``finite version'' of the triple
product identity.  It is in the form of a finite sum which is given a
combinatorial meaning, so that the triple product identity can be obtained by
taking the limit. Here the proof is non-combinatorial and relies on a functional
equation satisfied by a T-fraction. Then from this result on the triple product identity,
we derive a whole new family of Touchard-Riordan--like formulas whose
combinatorics is not yet understood.  Eventually, we prove a
Touchard-Riordan--like formula for a $q$-analog of Genocchi numbers, which is
related with Jacobi's identity for $(q;q)^3$ rather than the triple product
identity.
\end{abstract}

\maketitle

\setcounter{tocdepth}{1}
\tableofcontents

\part*{Introduction}

The original result of Touchard \cite{touchard}, later given more explicitly
by Riordan \cite{riordan}, answers the combinatorial problem of counting chord diagrams according to the number
of crossings. It has also been stated in terms of continued fractions by Read \cite{read}, so that the Touchard-Riordan
formula is:
\begin{equation} \label{tourio}
  [z^n] \left(  \frac 11 \cmo \frac{[1]_qz}{1} \cmo \frac{[2]_qz}{1} \cmo \dotsb \right)   =
  \frac{1}{(1-q)^n} \sum_{k=0}^n \left(\tbinom{2n}{n-k}-\tbinom{2n}{n-k-1}\right)(-1)^k q^{\binom{k+1}2},
\end{equation}
where $[z^n]$ is the operator that extracts the coefficient of $z^n$ in a
series, $[n]_q$ denotes $(1-q^n)/(1-q)$, and we use the notation for continued
fractions as in \eqref{notationfrac}. A combinatorial proof has been given by Penaud \cite{penaud}.
Recently, several variants have been
derived.  In particular, using continued fractions and basic hypergeometric
series, the first author \cite{josuat1} proved the following formula in a slightly different form:
\begin{equation} \label{qsec}
 [z^n] \left(  \frac 11 \cmo \frac{[1]_q^2z}{1} \cmo \frac{[2]_q^2z}{1} \cmo \dotsb \right)  =
  \frac{1}{(1-q)^{2n}} \sum_{k=0}^n \left(\tbinom{2n}{n-k}-\tbinom{2n}{n-k-1}  \right) q^{k(k+1)}\sum_{i=-k}^{k}(-q)^{-i^2}.
\end{equation}
This quantity will be referred as $q$-secant number, for it is a $q$-analog of the integers having
exponential generating function $\sec(z)$, and it is related with alternating permutations.
Our interest in these kind of formulas relies on their combinatorial meaning, and
also on that they are moments of orthogonal polynomials \cite{ismail,josuat2}.

\medskip

Some of the proofs of \eqref{tourio} and \eqref{qsec} are related with T-fractions, which are a particular kind
of continued fractions as in Definition~\ref{def_frac} below. Indeed, the common factor
$\big(\tbinom{2n}{n-k}-\tbinom{2n}{n-k-1} \big)$ can be fully explained by the link between S-fractions
and T-fractions (see Lemmas~\ref{lem2} and \ref{lem2bis}). The T-fractions
appear occasionally in combinatorics \cite{roblet} but much less than S-fractions or J-fractions.
They are central in this work: the first part of this article relies on the Roblet-Viennot interpretation of T-fractions 
in terms of Dyck paths~\cite{roblet}, and the second part, less combinatorial, relies on some functional equations
satisfied by T-fractions.
For example, the T-fractions were used in \cite{josuat1} to prove \eqref{qsec}. In the case of \eqref{tourio}, the link with T-fractions
was known by Cigler, Flajolet and Prodinger \cite{flajolet}.

\medskip

In the first part of this article, we have a new combinatorial approach based on the Roblet-Viennot 
interpretation of T-fractions \cite{roblet}. The first idea is to use the simple bijection between Dyck paths and Ferrers
diagrams that fit in a staircase diagram. The weighted Dyck paths considered here lead to define some 
objects that we call \dkc.s.
Using these objects we will see that the proof of \eqref{tourio} reduces to a classical construction on integer partitions, known
as Vahlen's involution \cite{vahlen}. But the main goal of this first part is to give a combinatorial proof of \eqref{qsec},
and to do this we show that \dkc.s give a combinatorial model whose weight sum is $\sum_{i=-k}^{k}(-q)^{i^2}$.
Besides, a nice feature of these \dkc.s is that in a limiting case they have a simple meaning in terms of integer
partitions, so that letting $k$ tend to infinity gives:
\begin{equation}
  \label{eq:gauss}
 \prod_{i\geq1} \frac{1-q^{i}}{1+q^{i}}=\sum_{i=-\infty}^{\infty} (-q)^{i^2},
\end{equation}
which is the special case $y=-1$ of Jacobi's triple product identity:
\begin{equation}
  \label{eq:jtp}
\prod_{n\geq1} (1-q^{2n}) (1+y q^{2n-1}) (1+y^{-1} q^{2n-1}) =
\sum_{n=-\infty}^{\infty} y^{n} q^{n^2}.
\end{equation}

Jacobi's triple product identity is ubiquitous in various areas of
mathematics and especially in analytical number theory,  quite a lot of
different proofs, generalizations and variants are known, see for example
\cite{alladi,berndt,schlosser,warnaar} and lots of references therein.
See \cite[Chapter 1]{berndt} for a general reference about this kind of identities.

So it is highly desirable to find a result similar to \eqref{qsec} whose
limiting case gives the triple product identity.
This turns out to be possible in the following form:

\begin{thm}\label{thm:main}
We define $\yqint{n}=(1+yq^n)/(1-q)$. Then
\begin{multline} \label{fjtp}
 [z^n] \left(  \frac 11 \cmo \frac{\yqint{1}\yiqint{1}z}{1} \cmo 
\frac{\qint{2}^2z}{1} \cmo
\frac{\yqint{3}\yiqint{3}z}{1} \cmo
\frac{\qint{4}^2z}{1} \cmo \dots \right)  \\
= \frac{1}{(1-q)^{2n}} \sum_{k=0}^n \left(\tbinom{2n}{n-k}-\tbinom{2n}{n-k-1}  \right) q^{k(k+1)}\sum_{i=-k}^{k} y^i q^{-i^2}.
\end{multline}
\end{thm}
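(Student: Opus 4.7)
The plan is to split the argument into two stages. First I would apply Lemmas~\ref{lem2} and \ref{lem2bis} to convert the S-fraction on the left of \eqref{fjtp} into a T-fraction expansion; this is the step that manufactures the binomial-difference coefficient $\tbinom{2n}{n-k} - \tbinom{2n}{n-k-1}$ on the right (combinatorially, the number of Dyck paths of length $2n$ whose maximum height equals $k+1$, up to the usual reflection shift). The output of this first stage is a reduction of \eqref{fjtp} to the identity
\[
\tau_k(y,q) \;=\; q^{k(k+1)} \sum_{i=-k}^{k} y^{i} q^{-i^{2}}, \qquad k \geq 0,
\]
where $\tau_k(y,q) = [w^k] T(w)$ and $T(w)$ is the T-fraction (in the rescaled variable $w = (1-q)^2 z$) built from the partial quotients $(1+yq^{2j+1})(1+y^{-1}q^{2j+1})$ and $(1-q^{2j})^2$ that appear in the original S-fraction.

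To prove this identity, I would introduce the generating series
\[
H(w) = \sum_{k \geq 0} w^k\, q^{k(k+1)}\, \Theta_k(y), \qquad \Theta_k(y) = \sum_{i=-k}^{k} y^i q^{-i^2},
\]
and show that both $H$ and $T$ satisfy the same functional equation. For $H$, the telescoping $\Theta_k - \Theta_{k-1} = (y^k + y^{-k}) q^{-k^2}$ (valid for $k \geq 1$) together with the elementary identity $q^{(j+1)(j+2)} = q^{j(j+1)} \cdot q^{2(j+1)}$ gives, after splitting off the $k=0$ term and re-indexing,
\[
H(w) \;=\; 1 \;+\; wq^{2}\,H(wq^{2}) \;+\; \frac{wqy}{1-wqy} \;+\; \frac{wqy^{-1}}{1-wqy^{-1}}.
\]
Since $H(0)=T(0)=1$ and an equation of this shape determines $[w^k]$ of $H$ from $[w^j](j<k)$ inductively, establishing the same functional equation for $T(w)$ forces $T=H$ and completes the proof.

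The crux of the argument, and what I expect to be the main obstacle, is therefore the derivation of that functional equation for the T-fraction $T(w)$ itself. A T-fraction is not stable under a scale change $w \mapsto q^{c} w$ in the naïve manner an S-fraction is, because of the nonzero constant parts in its partial numerators. Concretely, one has to peel off the first level of $T$ so that the $y$-dependent first partial quotient produces \emph{exactly} the two geometric series $\tfrac{wqy}{1-wqy}+\tfrac{wqy^{-1}}{1-wqy^{-1}}$ (these are the boundary contributions carrying the $y,y^{-1}$ parameters), while the remaining tail of the T-fraction, once contracted, rearranges into a shifted copy $wq^{2}\,T(wq^{2})$ driven by the even partial quotients $(1-q^{2j})^2$. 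Carrying out this unfolding and matching the three pieces term-for-term with the functional equation of $H$ is the only non-formal step; once it is done, the coefficient-induction described above finishes the proof.
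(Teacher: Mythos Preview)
Your high-level plan is exactly the paper's: reduce \eqref{fjtp} to the T-fraction identity of Theorem~\ref{jtp_th} via Lemma~\ref{lem2bis}, then show that the target series $H$ and the T-fraction $T_\lambda$ both satisfy the functional equation
\[
H(z)=\frac{1}{1-yqz}+\frac{1}{1-y^{-1}qz}-1+zq^2H(zq^2),
\]
and conclude by uniqueness. Your derivation of this equation for $H$ is the same as the paper's (your form differs only by the rewriting $\tfrac{1}{1-yqz}+\tfrac{1}{1-y^{-1}qz}-1=1+\tfrac{yqz}{1-yqz}+\tfrac{y^{-1}qz}{1-y^{-1}qz}$).

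The gap is in the step you yourself flag as the crux: showing that $T_\lambda$ satisfies the same functional equation. Your sketch (``peel off the first level so the $y$-dependent partial quotient produces the two geometric series, while the tail rearranges into $wq^2T(wq^2)$'') does not work as stated. Peeling one or two levels of the continued fraction does not leave a tail of the form $T_\lambda(wq^2)$, because substituting $z\mapsto zq^2$ changes every $1+z$ to $1+zq^2$, not just the partial numerators; there is no simple contraction that turns the tail into a rescaled copy of $T_\lambda$. The paper handles this by writing $T_\lambda(z)$ as an infinite product of $2\times2$ M\"obius matrices $M(q^{2i},z)$, encoding the functional equation as a matrix identity \eqref{funeqprod}, and then proving it by exhibiting an explicit closed form for the intertwining matrices
\[
\Omega_n = M(q^{2n-2},z)^{-1}\cdots M(1,z)^{-1}\,S\,M(1,zq^2)\cdots M(q^{2n-2},zq^2)
\]
(Lemma~\ref{lem_omega}), checked recursively, and observing that $\Omega_n[0]$ is a genuine power series so that Lemma~\ref{lemconv} applies in the limit. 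This explicit matrix computation is the substance of the proof; your ``unfolding'' paragraph would need to be replaced by something of this kind.
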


Note that if $y=-1$, we get \eqref{qsec}. An equivalent formulation without the factor 
$\big(\tbinom{2n}{n-k}-\tbinom{2n}{n-k-1}  \big)$ can be given by considering a T-fraction,
see Theorem~\ref{jtp_th}. Another equivalent formulation is that the sum
\begin{equation} \label{sumj}
   \sum_{i=-k}^{k} y^i q^{k(k+1)-i^2}
\end{equation}
can be given a combinatorial meaning in terms of paths (either Roblet-Viennot--style Dyck paths, or Schröder paths),
in such a way that the limit as $k\to\infty$ gives the triple product identity. For this reason, we call this result a
``finite version'' of the triple product identity.

\bigskip

In the second part of this article, we prove Theorem~\ref{thm:main} (more precisely, we prove the equivalent form
as in Theorem~\ref{jtp_th}) by giving a functional equation, satisfied on one side by the generating function of
\eqref{sumj}, and on the other side by a T-fraction. This proof is not at all combinatorial.

By taking some specializations in Theorem~\ref{thm:main}, we can obtain a whole new family of
Touchard-Riordan--like formulas. Generalizing the case of $q$-secant numbers, these new formulas
concern in particular $q$-analogs of integers having exponential generating function
\[
 \frac{  \cos(az)  }{ \cos(bz)  },
\]
where $a$ and $b$ satisfy certain conditions, see Theorem~\ref{theoS} for details.
These $q$-analogs might have a combinatorial meaning, however it is still to be investigated
and some open problems appear.

Eventually, we also prove a formula for a $q$-analog of Genocchi numbers (which have exponential
generating function $z\tan\frac z2$). Instead of the triple product identity, this one is related with the following, also
due to Jacobi:
\[
    \prod_{j=1}^\infty (1-q^j)^3 = \sum_{i=0}^{\infty} (-1)^i (2i+1) q^{\binom{i+1}2 }.
\]

The organization of this article should be clear from the table of contents at the beginning,
and the precisions given in the introduction.

\bigskip

\part{\texorpdfstring{Combinatorial proofs of Touchard-Riordan--like formulas and a particular case of triple product identity}{Combinatorial proofs of Touchard-Riordan–like formulas and a particular case of triple product identity}}

\section{Preliminaries}
\label{sec:prelem}

We define here S-fractions and T-fractions as certain formal power series in $z$, we give their combinatorial
interpretations in terms of paths, and give a lemma which is a basic tool to obtain Touchard-Riordan--like formulas.

\begin{defn} \label{def_frac}
We will use the space-saving notation for continued fractions:
\begin{equation} \label{notationfrac}
  \frac{a_0}{b_0} \cmo \frac{a_1}{b_1} \cmo \frac{a_2}{b_2} \cmo \dotsb =
  \cfrac{a_0}{b_0-
    \cfrac{a_1}{b_1-
      \cfrac{a_2}{ b_2- \genfrac{}{}{0pt}{0}{\phantom a}{\ddots}
      }
    }
  }.
\end{equation}
And to any sequence $\lambda=\{\lambda_n\}_{n\geq1}$, we associate the S-fraction $S_\lambda(z)$
and  the T-fraction $T_\lambda(z)$:
\[
  S_\lambda(z) =  \frac{1}{1} \cmo \frac{\lambda_1 z}{1} \cmo \frac{\lambda_2 z}{1}
                  \cmo \frac{\lambda_3 z}{1} \cmo \dotsb, \qquad %\cdo.
 T_\lambda(z) =  \frac{1}{1+z} \cmo \frac{\lambda_1 z}{1+z} \cmo \frac{\lambda_2 z}{1+z}
                  \cmo \frac{\lambda_3 z}{1+z} \cmo \dotsb. %\cdo.
\]
\end{defn}

The combinatorial interpretation of S-fractions in terms of weighted Dyck paths is widely known
(see for example \cite{goulden}), but the analogous result for T-fractions is not as common.
Roblet and Viennot \cite{roblet} showed that T-fractions are related with a particular
kind of weighted Dyck path with some conditions on each peak. Equivalently,
we can see T-fractions as related with Schröder paths.

\begin{defn}
  A \emph{\sch. path} of length $2n$ is a path from $(0,0)$ to $(2n,0)$ in
  $\mathbb{N}^2$ with three kinds of steps: an up step $(1,1)$, a down step
  $(1,-1)$, and a horizontal step $(2,0)$. A \emph{Dyck path} is a Schröder path
  with no horizontal step. Let $\SCH_n$ be the set of Schröder paths of length
  $2n$, and $\D_n\subset \SCH_n$ be the subset of Dyck paths of length $2n$.
\end{defn}

The continued fractions we consider here will be related with paths having
weights of the form $1-q^h$, $1+yq^h$, or $1+y^{-1}q^h$ for some $h$. It is
natural to distinguish two kinds of steps, an ``unweighted kind'' for the term $1$
and a ``weighted kind'' for the other terms $-q^h$, {\it etc}. This leads to the following
definition.

\begin{defn}
  A \emph{marked \sch. path} is a \sch. path in
  which each up step and down step may be marked. Let $\MSCH_n$ (resp.~$\MD_n$)
  denote the set of marked \sch. paths (resp. marked Dyck paths) of length $2n$.
  Let $\MD^*_n$ denote the subset of
  $\MD_n$ consisting of the marked Dyck paths without any ``marked peak'', {\it i.e.} an up step
  immediately followed by a down step, both marked.
\end{defn}

Note that we have $\D_n\subset\MD^*_n$  by identifying a Dyck path with a marked Dyck path having no marked
step. 

\begin{defn}
Given sequences $\A=(a_1,a_2,\dots)$, $\B=(b_1,b_2,\dots)$ and a marked
Schröder path $p$, we define the \emph{weight} $\wt(p;\A,\B)$ to be the product of
$a_h$ (resp.~$b_h$) for each unmarked up step (resp.~unmarked down step) between
height $h$ and $h-1$, and $-1$ for each horizontal step
 (hence each marked step has weight 1).
We will use the following sequences:
\begin{align*}
\one &= (1,1,1,\dots),    &  \zero  &= (0,0,0,\dots), \\
\U &=(\qint{1}, \qint{2}, \ldots), & \V &=(1-q, 1-q^2, \ldots).
\end{align*}
\end{defn}

The following lemma is the combinatorial interpretation of continued fractions in terms of paths.

\begin{lem}\label{lem1}
 Let $\A=(a_1,a_2,\dots)$, $\B=(b_1,b_2,\dots)$ be two sequences and
  $\lambda_h=a_hb_h$. Then
\begin{align}
 S_\lambda(z) &= \sum_{n=0}^\infty z^n \sum_{ p \in \D_n} \wt(p;\A,\B),   \label{expS}    \\
 T_\lambda(z) &= \sum_{n=0}^\infty z^n \sum_{ p \in \SCH_n } \wt(p;\A,\B)
                       = \sum_{n=0}^\infty z^n \sum_{ p \in \MD^*_n } \wt(p;\A-\one,\B-\one), \label{expT}
\end{align}
where $\A-\one$ means the sequence $(a_1-1,a_2-1,\dots)$.
\end{lem}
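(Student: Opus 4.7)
My plan is to prove each of the three equalities by showing that both sides satisfy the same quadratic functional equation (a first-return decomposition on the path side, the continued fraction unfolding on the other), then conclude by induction in the sense of formal power series in $z$.

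For the S-fraction identity \eqref{expS}, I would apply the classical first-return decomposition to Dyck paths: a non-empty $p\in\D_n$ factors uniquely as $p=U\,p_1\,D\,p_2$ where $U$ is the initial up step, $D$ is its matching (first-return) down step, $p_1\in\D_{n_1}$ sits at height $1$, and $p_2\in\D_{n_2}$ sits at height $0$, with $n=1+n_1+n_2$. Since $\wt(p;\A,\B)=a_1\,\wt(p_1;\A',\B')\,b_1\,\wt(p_2;\A,\B)$ with $\A',\B'$ the shifts of $\A,\B$, setting $S(z)$ for the right-hand side of \eqref{expS} yields $S(z)=1+\lambda_1 z\,S^{(1)}(z)\,S(z)$, i.e.\ $S(z)=1/(1-\lambda_1 z\,S^{(1)}(z))$. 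This matches the unfolding $S_\lambda(z)=1/(1-\lambda_1 z\,S_{\lambda'}(z))$, so equality follows coefficient by coefficient.

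For the first equality in \eqref{expT}, I would do the analogous first-return decomposition for Schröder paths, adding a new initial case: a non-empty $p\in\SCH_n$ either begins with a horizontal step (contributing the weight $-1$ and a factor $z$) followed by a Schröder path, or begins with $U\,p_1\,D\,p_2$ as before. Writing $B(z)$ for the right-hand side of the first line of \eqref{expT}, this gives
\[
 B(z) = 1 - z\,B(z) + \lambda_1 z\,B^{(1)}(z)\,B(z),
\]
so $B(z) = 1/(1+z-\lambda_1 z\,B^{(1)}(z))$, which is exactly the unfolding of $T_\lambda(z)$.

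The subtlest part is the second equality in \eqref{expT}, where one must handle the ``no marked peak'' restriction. For $A(z)$ the corresponding generating function, I would again do a first-return decomposition $p=U^{(\varepsilon_1)}p_1 D^{(\varepsilon_2)}p_2$ of $p\in\MD^*_n$, and split on whether $p_1$ is empty. If $p_1$ is non-empty, the marks $\varepsilon_1,\varepsilon_2$ on the initial up and first-return down are unconstrained, so summing them contributes $(a_1-1+1)(b_1-1+1)=\lambda_1$. If $p_1$ is empty, then $U^{(\varepsilon_1)}D^{(\varepsilon_2)}$ is a peak and the four configurations are reduced to three (forbidding both marked), contributing $(a_1-1)(b_1-1)+(a_1-1)+(b_1-1)=\lambda_1-1$. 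Combining,
\[
 A(z) = 1 + z A(z)\bigl[(\lambda_1-1) + \lambda_1(A^{(1)}(z)-1)\bigr] = 1 - zA(z) + \lambda_1 z A^{(1)}(z)\,A(z),
\]
which is the same recursion as for $B(z)$; hence $A(z)=B(z)=T_\lambda(z)$.

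The only real obstacle is the bookkeeping in the empty-$p_1$ case: one must verify that the inclusion–exclusion arising from forbidding one marking configuration at an empty-$p_1$ peak precisely produces the ``$-1$'' needed to turn the S-fraction-style recursion into the T-fraction-style recursion. Once this is checked, the three identities follow uniformly. I would note in passing that the same case split also yields a direct sign-reversing interpretation of the second equality, but the recursive argument above is the cleanest route and requires no explicit bijection.
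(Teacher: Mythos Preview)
Your argument is correct.  For \eqref{expS} and the first equality of \eqref{expT} you use the first-return decomposition, which is exactly the ``sequences of objects'' argument the paper alludes to; there is no real difference there.

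For the second equality of \eqref{expT} you take a genuinely different route from the paper.  The paper proves it by a sign-reversing involution on marked Schr\"oder paths: it first expands each $a_h=(a_h-1)+1$, $b_h=(b_h-1)+1$ to pass from $\SCH_n$ to the set $\MSCH_n$ of all marked Schr\"oder paths, and then cancels each horizontal step (weight $-1$) against a marked peak (weight $+1$), leaving $\MD^*_n$ as the fixed-point set.  You instead run the same first-return recursion directly on $\MD^*_n$, and the crucial observation is that at an empty-$p_1$ peak the forbidden \emph{both-marked} configuration accounts for exactly the ``missing'' term $-1$, turning $\lambda_1$ into $\lambda_1-1$ and hence the S-recursion into the T-recursion.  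Your approach has the advantage of uniformity: all three identities fall to the same device, with no separate bijection needed.  The paper's involution, on the other hand, is bijective at the level of individual paths and yields the identity
\[
\sum_{p\in \SCH_n} \wt(p;\A,\B)= \sum_{p\in \MD^*_n} \wt(p;\A-\one,\B-\one)
\]
directly for each fixed $n$, which the paper uses again later (in the proof of Lemma~\ref{lem2}).  Both arguments are short and complete; they simply emphasise different structural features.
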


\begin{proof}
The first identity \eqref{expS} is a classical result, see for example \cite{goulden}. It follows essentially
from the fact that if $f$ is a generating function counting some objects, $1/(1-f)$ counts sequences of 
objects. The same method proves the equality of the first two expressions in \eqref{expT}.

The equality between the first and third expressions in \eqref{expT} is a result of Roblet and Viennot~\cite{roblet}.
Besides, a direct proof of the equality between the second and third expressions in \eqref{expT} can be done.
First note that
\[
\sum_{p\in \SCH_n} \wt(p;\A,\B)=\sum_{p\in \MSCH_n} \wt(p;\A-\one,\B-\one).
\]
There is a sign-reversing involution on $\MSCH_n$ such that the set of fixed points is $\MD^*_n$.
This involution just exchanges a horizontal step (this has weight $-1$) with a marked peak (this has weight $1$). Hence:
\begin{equation}  \label{eq:2}
\sum_{p\in \SCH_n} \wt(p;\A,\B)= \sum_{p\in \MD^*_n} \wt(p;\A-\one,\B-\one).
\end{equation}
Thus we get  \eqref{expT}.
\end{proof}

The following lemma gives a relation between the coefficients of an
S-fraction and a T-fraction. This is the first step in our proofs of the
Touchard-Riordan--like formulas.

\begin{lem} \label{lem2}
For any sequences $\A$ and $\B$ we have
\begin{align}
 \sum_{p\in \D_n} \wt(p;\A,\B)
 &= \sum_{k=0}^n \left( \tbinom{2n}{n-k} - \tbinom{2n}{n-k-1} \right) \sum_{p\in \SCH_k} \wt(p;\A,\B)     \label{lem2S}     \\
 &= \sum_{k=0}^n \left( \tbinom{2n}{n-k} - \tbinom{2n}{n-k-1} \right) \sum_{p\in \MD^*_k} \wt(p;\A-\one,\B-\one).   \label{lem2D}
\end{align}
\end{lem}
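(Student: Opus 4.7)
My plan is to reduce \eqref{lem2S} to the generating-function identity
\[
 S_\lambda(z) = C(z)\, T_\lambda\bigl(zC(z)^2\bigr),
\]
where $C(z) = (1-\sqrt{1-4z})/(2z)$ is the Catalan series, characterised by $C(z) = 1 + zC(z)^2$. The second equation \eqref{lem2D} will then follow immediately from \eqref{lem2S} by substituting $\sum_{p \in \SCH_k} \wt(p;\A,\B) = \sum_{p \in \MD^*_k} \wt(p;\A-\one,\B-\one)$, which is precisely \eqref{eq:2}, already proved in Lemma~\ref{lem1}.

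To establish the boxed identity I will use first-return recursions. Writing $\lambda' = (\lambda_2,\lambda_3,\dots)$, a Dyck path is either empty or consists of an initial arch containing a Dyck path shifted one unit upward, followed by a Dyck path on the baseline; this gives $S_\lambda(z) = 1 + \lambda_1 z\, S_\lambda(z)\, S_{\lambda'}(z)$. A Schr\"oder path is either empty, starts with a horizontal step (of weight $-1$), or starts with an initial arch, yielding $(1+w)\,T_\lambda(w) = 1 + \lambda_1 w\, T_\lambda(w)\, T_{\lambda'}(w)$. (Both recursions can alternatively be read directly from the continued-fraction expressions in Definition~\ref{def_frac}.) Setting $w = zC(z)^2$, so that $1+w = C(z)$, the T-fraction recursion becomes
\[
 C(z)\,T_\lambda(w) = 1 + \lambda_1 z \, \bigl(C(z)\,T_\lambda(w)\bigr)\bigl(C(z)\,T_{\lambda'}(w)\bigr),
\]
which is exactly the recursion satisfied by $S_\lambda(z)$. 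Since such a recursion determines a formal power series uniquely from its constant term $1$ (by induction on the $z$-coefficient, using that $[z^n]$ depends only on $\lambda_1,\dots,\lambda_n$), we conclude $S_\lambda(z) = C(z)\, T_\lambda(zC(z)^2)$.

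Finally I will extract the coefficient of $z^n$. Using the classical fact that $[z^n]\bigl(z^k C(z)^{2k+1}\bigr) = \binom{2n}{n-k} - \binom{2n}{n-k-1}$---which follows either from Lagrange inversion or from the observation that non-negative lattice paths from $(0,0)$ to $(2n,2k)$ decompose uniquely as $2k+1$ Dyck paths separated by $2k$ up steps and are counted by the reflection principle---the expansion
\[
 C(z)\,T_\lambda\bigl(zC(z)^2\bigr) = \sum_{k\ge 0} \bigl([w^k]T_\lambda(w)\bigr)\, z^k C(z)^{2k+1}
\]
gives \eqref{lem2S} once coefficients of $S_\lambda$ and $T_\lambda$ are interpreted as weighted path sums via Lemma~\ref{lem1}. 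The only subtle point is the substitution $w = zC(z)^2$: one must verify that it is a legitimate formal-power-series substitution (which it is, since $zC(z)^2$ has zero constant term) and that the recursion for $C(z)\,T_\lambda(zC(z)^2)$ really matches the S-fraction recursion term-by-term; everything else is a routine manipulation. A purely bijective alternative---pairing each $p \in \D_n$ with a Schr\"oder path of length $2k$ together with a non-negative lattice path of length $2n$ ending at height $2k$, in a weight-preserving way---is also possible but more intricate to write down cleanly, so I take the generating-function route.
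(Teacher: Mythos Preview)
Your argument is correct. The recursions for $S_\lambda$ and $T_\lambda$ are standard and the Catalan substitution $w=zC(z)^2$, $1+w=C(z)$ does convert one into the other; the uniqueness step is fine once one notes that the recursion is really a coupled system in all shifts $\lambda^{(i)}$ and that each coefficient of $z^n$ is determined by lower-order ones. The coefficient extraction $[z^{n}]\bigl(z^{k}C(z)^{2k+1}\bigr)=\binom{2n}{n-k}-\binom{2n}{n-k-1}$ is the ballot-number identity and your two justifications are both valid.

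Your route is, however, genuinely different from the paper's proof of this lemma. The paper argues the \emph{inverse} direction combinatorially: a Schr\"oder path of length $2n$ is obtained from a Dyck path of length $2k$ by inserting $n-k$ horizontal steps in $\binom{n+k}{n-k}$ ways, giving $\sum_{p\in\SCH_n}\wt(p;\A,\B)=\sum_k(-1)^{n-k}\binom{n+k}{n-k}\sum_{p\in\D_k}\wt(p;\A,\B)$, and then inverts this lower-triangular relation. What you do is closer in spirit to the paper's later Lemma~\ref{lem2bis}, where the identity $T_\lambda(z)=\tfrac1{1+z}S_\lambda\bigl(z/(1+z)^2\bigr)$ (the compositional inverse of your $S_\lambda(z)=C(z)T_\lambda(zC(z)^2)$) is obtained by direct manipulation of the continued fraction, followed by Lagrange inversion. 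Your contribution is to derive that same generating-function identity via first-return decompositions rather than continued-fraction algebra. The paper's Lemma~\ref{lem2} proof is shorter and more transparently combinatorial; your approach has the advantage of producing the substitution identity explicitly and of being self-contained once the recursions are granted.
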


\begin{proof}
  Firstly, the equivalence of \eqref{lem2S} and \eqref{lem2D} follows from \eqref{eq:2}.
  Secondly, an inclusion-exclusion between Schröder paths and Dyck paths gives a quick proof of \eqref{lem2S}.
  Indeed, a Schröder path of length $2n$ can be obtained from a Dyck path of length $2k$ by inserting $n-k$
  horizontal steps, and there are $\binom{n+k}{n-k}$ ways to do so. This shows
\[
  \sum_{p\in \SCH_n} \wt(p;\A,\B)
  = \sum_{k=0}^n  (-1)^{n-k} \binom{n+k}{n-k}   \sum_{p\in \D_k} \wt(p;\A,\B).
\]
It remains only to inverse the (lower-triangular) matrix with coefficients $(-1)^{n-k} \binom{n+k}{n-k}$ to obtain \eqref{lem2S}.
See Riordan's book \cite{riordan68} for this kind of inverse relations: the present one appears in Chapter~2, Section~2.4,
Equation~(12).
\end{proof}

The previous lemma can be stated in terms of continued fractions via Lemma~\ref{lem1}.
We will do this explicitly in Lemma~\ref{lem2bis} where we give a proof based on continued fractions.

To complete these preliminaries, let us briefly describe a method to prove bijectively the previous lemma,
the idea is to generalize Penaud's decomposition from \cite{penaud} and this can be describes as
follows. Here, a \emph{Dyck prefix} is a path in $\mathbb{N}^2$ from the origin
to any point consisting of up steps and down steps.
Each $p\in\MD_n$ can be uniquely decomposed into $(\ell,p')$ where $\ell$ is a Dyck
prefix of length $2n$ ending at height $2k$ and $p'\in\MD^*_k$ for some
$k$, and in such a way that for any sequences $\A$ and $\B$, we have $\wt(p;\A,\B) =
\wt(p';\A,\B)$. It is easy to show that the number of Dyck prefixes of length $2n$ ending at
height $2k$ is equal to $\binom{2n}{n-k} - \binom{2n}{n-k-1}$.
Thus, from this decomposition,  we obtain bijectively
\begin{equation}
  \label{eq:1}
\sum_{p\in \MD_n} \wt(p;\A,\B) =
\sum_{k=0}^n \left( \tbinom{2n}{n-k} - \tbinom{2n}{n-k-1} \right)
\sum_{p\in \MD^*_k} \wt(p;\A,\B).
\end{equation}
Penaud actually defined his decomposition in terms of trees. The generalization in terms of paths is
described in \cite{rubey}. A map $\phi$ sending $p \in \MD_k(\A,\B)$ to a pair
$( \ell , p')$ is as follows:  We obtain $p'$ by deleting all sub-Dyck paths
of $p$ consisting of marked steps (thus $p'$ has no marked peak). 
Then $\ell$ is obtained from $p$ by replacing each down step of $p'$ with an up step.
See \cite{rubey} for more details. 

\section{Use of integer partitions: case of the Touchard-Riordan formula}
\label{sec:anoth-comb-proof}

Using Lemmas~\ref{lem1} and \ref{lem2}, the left-hand side of \eqref{tourio} is
\[
  \sum_{p\in\D_n} \wt(p;\U,\one) = \tfrac{1}{(1-q)^{n}}
  \sum_{p\in\D_n} \wt(p;\V,\one) = \tfrac{1}{(1-q)^{n}}
\sum_{k=0}^n \left( \tbinom{2n}{n-k} - \tbinom{2n}{n-k-1} \right)
\sum_{p\in \MD^*_k} \wt(p;\V-\one,\zero).
\]
To prove \eqref{tourio}, it remains only to show that 
\begin{equation}
  \label{eq:13}
\sum_{p\in \MD^*_k} \wt(p;\V-\one,\zero)= (-1)^kq^{\binom{k+1}2}.
\end{equation}
Penaud \cite{penaud} gave a sign-reversing involution by going through a series of objects
such as trees and parallelogram polyominoes. We give here another way to do so
in terms of partitions (but it can be checked that the involution we present is
equivalent to Penaud's).

Note that it is sufficient to consider $p\in\MD^*_k$ without unmarked down steps
because $\wt(p;\V-\one,\zero)=0$ if there is an unmarked down step.  Now we
introduce a combinatorial object corresponding to such $p\in\MD^*_k$.

A \emph{partition} is a weakly decreasing sequence
$\lambda=(\lambda_1,\lambda_2,\dots,\lambda_r)$ of positive integers called
\emph{parts}. We will sometimes identify a partition $\lambda$ with its
\emph{Ferrers diagram}. For two partitions $\lambda$ and $\mu$, we
write $\lambda\subset \mu$ if the Ferrers diagram of $\mu$ contains that of
$\lambda$. In this case we define $\mu/\lambda$ to be the set theoretic
difference of the Ferrers diagrams, see Figure~\ref{fig:Ferrers}. 

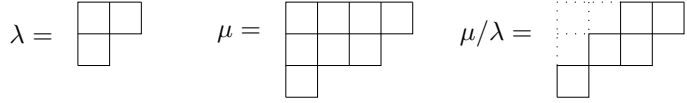
\begin{figure}
  \centering
 \begin{pspicture}(0,0)(4,-3)
   \rput[r](3.5,-1){$\lambda=$}
  \end{pspicture}
  \begin{pspicture}(0,0)(2,-3)
    \cell(1,1)[] \cell(1,2)[] 
    \cell(2,1)[] 
    \psline(0,-2)(0,0)(2,0)
  \end{pspicture}
 \begin{pspicture}(0,0)(4,-3)
   \rput[r](3.5,-1){$\mu=$}
  \end{pspicture}
  \begin{pspicture}(0,0)(4,-3)
    \cell(1,1)[] \cell(1,2)[] \cell(1,3)[] \cell(1,4)[] 
    \cell(2,1)[] \cell(2,2)[] \cell(2,3)[] \cell(3,1)[] 
    \psline(0,-3)(0,0)(4,0)
  \end{pspicture}
 \begin{pspicture}(0,0)(4,-3)
   \rput[r](3.5,-1){$\mu/\lambda=$}
  \end{pspicture}
  \begin{pspicture}(0,0)(4,-3)
    \cell(1,3)[] \cell(1,4)[] 
    \cell(2,2)[] \cell(2,3)[] \cell(3,1)[] 
    \psline(0,-3)(0,-2)(1,-2)(1,-1)(2,-1)(2,0)(4,0)
    \psline[linestyle=dotted,linewidth=.5pt](0,-2)(0,0)(2,0)
    \psline[linestyle=dotted,linewidth=.51pt](0,-1)(1,-1)(1,0)
 \end{pspicture}
  \caption{The Ferrers diagrams of $\lambda=(2,1)$, $\mu=(4,3,1)$, and $\mu/\lambda$.}
  \label{fig:Ferrers}
\end{figure}

We denote by $\delta_k$ the staircase partition $(k,k-1,\ldots,1)$.

\begin{defn}
  A \emph{half \dkpc.}  is a pair $(\lambda,A)$ of a partition
  $\lambda\subset\delta_{k-1}$ and a set $A$ of arrows each of which occupies a
  whole row of $\dk/\lambda$ such that no outer corner of $\dk/\lambda$ is
  occupied by an arrow. Here, by an outer corner we mean a cell $c\in
  \dk/\lambda$ such that $\lambda\cup c$ is a partition.  The \emph{length} of
  an arrow is the number of cells it occupies. We denote by $\HDp{k}$ the set of
  half \dkpc.s. The \emph{$q$-weight} of a half \dkpc. $C=(\lambda,A)$ is
  defined by
\[\wt_q(C) = (-1)^{|A|} q^{|\lambda| + \norm{A}},\]
where $\norm{A}$ is the sum of the arrow lengths.
\end{defn}

For example, the half \dkpc. in Figure~\ref{fig:dyckpath} (left) has $q$-weight
$\wt_q(C) = (-1)^3 q^{8+3+3+2}$. 

\begin{figure}
  \centering
\begin{pspicture}(0,0)(7,-7)
\gcell(1,1)[]\gcell(1,2)[]\gcell(1,3)[]\gcell(1,4)[]
\gcell(2,1)[]\gcell(2,2)[]
\gcell(3,1)[]\gcell(3,2)[]
\psdk7
\harrow(5,1)[3] \harrow(6,1)[2] \harrow(3,3)[3] 
\end{pspicture} 
\begin{pspicture}(-4,-1.5)(14,5)
\psgrid[gridcolor=gray](0,0)(14,4)
\rput(-2,2){$\Longleftrightarrow$}
\UP(0,0)[] \MUP(1,1)[$-q^2$] \MUP(2,2)[] \UP(3,3)[]
\MDW(4,4)[$-q^4$] \MDW(5,3)[] \MUP(6,2)[] \UP(7,3)[$-q^4$]
\MDW(8,4)[$-q^4$] \MDW(9,3)[] \UP(10,2)[$-q^3$] \MDW(11,3)[$-q^3$]
\MDW(12,2)[] \MDW(13,1)[$-q$]
\end{pspicture}
\caption{A half \dkpc. $(\lambda,A)$ and the corresponding marked Dyck path,
  where the Ferrers diagram of $\lambda$ is shaded (left) and the marked steps
  are the thicker steps (right).}
  \label{fig:dyckpath}
\end{figure}
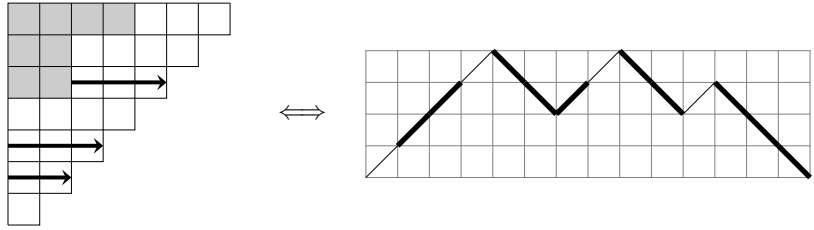

There is a simple correspondence between $(\lambda,A)\in \HDp k$ and
$p\in\MD^*_k$ without unmarked down steps as follows.  The border between
$\lambda$ and $\delta_k/\lambda$ is the Dyck path $p$ (rotated $45^\circ$) and
there is an arrow occupying a row of $\delta_k/\lambda$ if and only if the
corresponding up step in $p$ is marked. See Figure~\ref{fig:dyckpath}.  It is
not difficult to show that in this correspondence we have
\[
\wt(p,\V-\one,\zero) = (-1)^{k} q^{\binom{k+1}{2}} \wt_{q^{-1}}(\lambda,A). 
\]
Thus the following proposition implies \eqref{eq:13}.

\begin{prop}\label{prop:half_dk}
For $k\geq0$, we have
\[
  \sum_{(\lambda,A)\in\HDp k} \wt_q(\lambda,A) = 1.
\]
\end{prop}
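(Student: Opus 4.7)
The plan is to prove Proposition~\ref{prop:half_dk} via an explicit sign-reversing, $q$-weight-preserving involution $\phi$ on $\HDp{k}$ whose unique fixed point is $(\emptyset,\emptyset)$, which contributes $q^{0}=1$. All other configurations then pair up with weights summing to zero, so the total is $1$.

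The key structural observation is that $\wt_q(\lambda,A) = (-1)^{|A|}\, q^{|\lambda|+\norm{A}}$ treats an arrow of length $\ell$ and $\ell$ cells of $\lambda$ symmetrically up to a sign: both contribute the same power of $q$, and only arrows affect the sign. Hence any local move that converts one arrow of length $\ell$ into $\ell$ extra cells of $\lambda$ (or vice versa) is automatically sign-reversing and preserves the $q$-exponent. The entire difficulty lies in making this intuitive swap canonical.

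Construction of $\phi$. Given $(\lambda,A)\ne(\emptyset,\emptyset)$, I would scan $\delta_k$ from the top and locate a canonical feature. A natural candidate: find the topmost arrow of $A$ when $A\neq\emptyset$, say in row $i^{*}$ with length $\ell=k-i^{*}+1-\lambda_{i^{*}}$; delete this arrow and cascade its $\ell$ cells into $\lambda$ along a specific column (for instance, column $\lambda_{i^{*}}+1$ filled upward from row $i^{*}-1$). If instead $A=\emptyset$ (and $\lambda\neq\emptyset$), identify the removable strip in $\lambda$ produced by the inverse operation and turn it back into an arrow. The precise choice of column and direction is dictated by what makes the map invertible.

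The main obstacle will be to formulate the cascade rule so that (i) the new $\lambda$ still lies in $\delta_{k-1}$; (ii) the arrow-validity condition $\lambda_{i-1}=\lambda_i$ continues to hold for every arrow in the new configuration; and (iii) the rule is an involution. This demands a case analysis on the local block structure of $\lambda$ around row $i^{*}$ — depending on whether $i^{*}$ starts a new block of equal parts, lies strictly inside one, or sits among other arrows whose lengths will shift when $\lambda$ grows — together with boundary bookkeeping. Once these details are settled, sign-reversal and weight-preservation follow at once from the ``$\ell$ cells for $\ell$ cells'' balance, $(\emptyset,\emptyset)$ is manifestly the only fixed point, and the proposition is proved. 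This involution is the partition-theoretic counterpart of Penaud's \cite{penaud} bijection, originally phrased in terms of trees and parallelogram polyominoes; an alternative route would be to first translate $\HDp{k}$ into marked Dyck paths in $\MD^{*}_{k}$ via the correspondence described just before the statement, and then invoke Penaud's involution directly on paths.
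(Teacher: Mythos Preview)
Your proposal outlines the right strategy---a sign-reversing involution fixing only $(\emptyset,\emptyset)$---but it is not a proof: the involution is never actually constructed. You write ``once these details are settled'' and ``the precise choice of column and direction is dictated by what makes the map invertible,'' but settling those details \emph{is} the content of the argument. The cascade rule you sketch (delete the topmost arrow of length $\ell$ and deposit $\ell$ cells into a chosen column of $\lambda$) is not obviously well-defined: filling column $\lambda_{i^{*}}+1$ upward from row $i^{*}-1$ may overflow past row~$1$, collide with existing cells, or produce a shape that is no longer a partition contained in $\delta_{k-1}$; and you give no argument that the inverse move (carving $\ell$ cells out of $\lambda$ to recreate an arrow) lands on a configuration still satisfying the ``no arrow at an outer corner'' constraint or that it really inverts the forward move. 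Without these verifications there is no involution, and the suggested fallback of ``invoking Penaud's involution directly on paths'' is a citation, not a proof.

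The paper sidesteps this case analysis by passing through an intermediate object. It builds a weight-preserving bijection $\psi_1\colon\HDp{k}\to\OP_k$ to overpartitions contained in $\delta_{k-1}$, under which $\wt_q(\lambda,A)=(-1)^{s(\mu)}q^{|\mu|}$ with $s(\mu)$ the number of overlined parts. On $\OP_k$ the sign-reversing involution is then trivial: toggle whether the smallest part is overlined. The bijection $\psi_1$ first slides each arrow left so that it occupies an entire row of $\delta_k$ (the outer-corner condition is exactly what makes this step reversible), then sorts ordinary rows and arrows together into an overpartition. This is the idea missing from your sketch: rather than exchanging an arrow for cells \emph{in situ} and fighting the resulting boundary conditions, one first normalizes the picture so that arrows and ordinary parts become interchangeable rows of an overpartition, after which the cancellation is immediate.
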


In order to prove the above proposition we will use overpartitions introduced by
Corteel and Lovejoy \cite{corteel}.  An \emph{overpartition} is a partition in
which the last occurrence of an integer may be overlined. Let $\OP_k$ denote the
set of overpartitions whose underlying partitions are contained in
$\delta_{k-1}$. We will represent $\mu\in \OP_k$ as the Ferrers diagram of $\mu$
inside $\delta_k$, where each row corresponding to an overlined part is replaced
with an arrow of the same length. For example, the right diagram in
Figure~\ref{fig:overpartition} represents the overpartition $(\overline{6},
\overline 4, 3, \overline 3, 1, \overline1)$.

Now we will construct a bijection $\psi_1:\HDp k \to \OP_k$.  For
$(\lambda,A)\in\HDp k$, we define $\psi_1(\lambda,A)$ as follows. Firstly, for
each arrow $u$, we delete the cells of $\lambda$ in the row of $\delta_k$
containing $u$ and make $u$ occupy the whole row of $\delta_k$. See the left and
the middle diagrams in Figure~\ref{fig:overpartition}.  Note that because of the
condition on $(\lambda,A)\in\HDp k$ that no outer conner is occupied by an
arrow, this process is invertible. Secondly, we sort the remaining rows of
$\lambda$ and the arrows to get an overpartition. More precisely, whenever there
is a row of $\lambda$ immediately followed by an arrow of longer length, we
exchange the row of $\lambda$ and the arrow as shown below:
\begin{center}
  \begin{pspicture}(0,0)(5,-2)
    \gcell(1,1)[] \gcell(1,2)[] \gcell(1,3)[]
    \harrow(2,1)[5] \psgrid(0,0)(3,-1) \psline(0,-1)(0,-2)
  \end{pspicture}
 \begin{pspicture}(0,0)(4,-2)
   \rput(2,-1){$\Rightarrow$}
  \end{pspicture}
 \begin{pspicture}(0,0)(5,-2)
    \gcell(2,1)[] \gcell(2,2)[] \gcell(2,3)[]
    \harrow(1,1)[5] \psgrid(0,-1)(3,-2) \psline(0,0)(0,-1)
  \end{pspicture}
\end{center}
For example, if we apply this process to the middle diagram in
Figure~\ref{fig:overpartition}, then we get the right diagram there. By the
conditions on $(\lambda,A)\in \HDp k$, it is easy to see that the resulting
diagram is an overpartition contained in $\delta_{k-1}$. We define
$\psi_1(\lambda,A)$ to be the resulting overpartition.

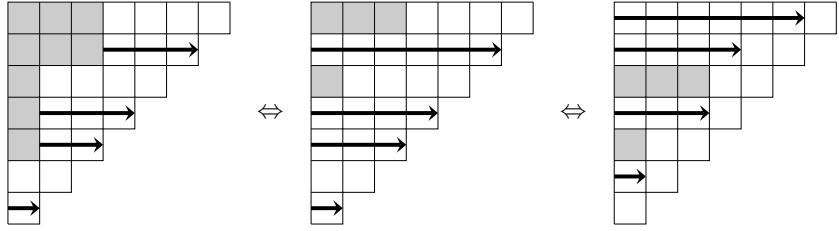
\begin{figure}
  \centering
\begin{pspicture}(0,0)(7,-7)
\gcell(1,1)[]\gcell(1,2)[]\gcell(1,3)[]
\gcell(2,1)[]\gcell(2,2)[]\gcell(2,3)[]
\gcell(3,1)[]
\gcell(4,1)[]
\gcell(5,1)[]
\psdk7
\harrow(2,4)[3] \harrow(4,2)[3] \harrow(5,2)[2]  \harrow(7,1)[1]
\end{pspicture} 
\begin{pspicture}(0,0)(2,-7)
\rput(1,-3.5){$\Leftrightarrow$}
\end{pspicture} 
\begin{pspicture}(0,0)(7,-7)
\gcell(1,1)[]\gcell(1,2)[]\gcell(1,3)[]
\gcell(3,1)[]
\psdk7
\harrow(2,1)[6] \harrow(4,1)[4] \harrow(5,1)[3]  \harrow(7,1)[1]
\end{pspicture} 
\begin{pspicture}(0,0)(2,-7)
\rput(1,-3.5){$\Leftrightarrow$}
\end{pspicture} 
\begin{pspicture}(0,0)(7,-7)
\gcell(3,1)[]\gcell(3,2)[]\gcell(3,3)[]
\gcell(5,1)[]
\psdk7
\harrow(1,1)[6] \harrow(2,1)[4] \harrow(4,1)[3]  \harrow(6,1)[1]
\end{pspicture} 
 \caption{A half \dkpc. (left) and the corresponding overpartition (right). One
   can obtain the left diagram from the middle one using the fact that no outer
   corner is occupied by an arrow.}
  \label{fig:overpartition}
\end{figure}

\begin{lem}\label{lem:ov}
  The map $\psi_1:\HDp k\to \OP_k$ is a bijection. Moreover, if $\psi_1(\lambda,A)=\mu$, then
\[ \wt_q(\lambda,A) = (-1)^{s(\mu)} q^{|\mu|}\]
where $s(\mu)$ is the number of overlined parts in $\mu$. 
\end{lem}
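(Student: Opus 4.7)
The plan is to construct an explicit inverse $\psi_1^{-1}\colon\OP_k\to\HDp k$ and to verify the weight identity by tracking how $|\lambda|+\norm{A}$ evolves through the two steps of $\psi_1$.

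Given $\mu\in\OP_k$, I construct $\psi_1^{-1}(\mu)$ in two stages that reverse step~2 and then step~1 of $\psi_1$. First, for each overlined part $\bar\ell$ of $\mu$, place an arrow occupying the whole row $r=k-\ell+1$ of $\delta_k$; let $R$ be the set of row indices so obtained. Enumerate $\{1,\dots,k\}\setminus R$ in increasing order, the non-overlined parts of $\mu$ in weakly decreasing order, and assign the $t$-th non-overlined part to the $t$-th non-arrow row (padding with zeros). This undoes step~2. Then undo step~1: processing $r\in R$ top to bottom, set $\lambda_r:=\lambda_{r-1}$, trim the arrow in row $r$ on the right from length $k-r+1$ to length $k-r+1-\lambda_{r-1}$, and insert $\lambda_{r-1}$ cells on the left. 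The equation $\lambda_r=\lambda_{r-1}$ for $r\in R$ is precisely the outer-corner condition, and since $\mu\subset\delta_{k-1}$ forces $\ell\leq k-1$, every $r\in R$ satisfies $r\geq 2$, so $\lambda_{r-1}$ is already defined when needed.

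That $\psi_1$ and $\psi_1^{-1}$ are mutually inverse follows step-by-step from the definitions: trimming and extending of arrows are literal inverses, and the sorting in step~2 is inverted by the fact that an arrow of length $\ell$ comes from the unique row $k-\ell+1$ of $\delta_k$. For the weight, step~1 of $\psi_1$ transfers $\lambda_r$ cells from $\lambda$ into the arrow in each arrow-row, preserving $|\lambda|+\norm{A}$; step~2 permutes rows and also preserves this sum. Hence $|\mu|=|\lambda|+\norm{A}$, while the number of overlined parts $s(\mu)$ equals $|A|$ by the one-to-one correspondence between arrows and overlines. The identity $\wt_q(\lambda,A)=(-1)^{s(\mu)}q^{|\mu|}$ follows.

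The main obstacle is verifying that step~2 of $\psi_1$ really produces an overpartition contained in $\delta_{k-1}$ (and symmetrically that $\psi_1^{-1}(\mu)$ yields $\lambda\subset\delta_{k-1}$). The key facts are: (i) the cell-row lengths $\lambda_r$ for $r\notin R$ are weakly decreasing in $r$, because $\lambda$ is a partition; (ii) the arrow lengths $k-r+1$ for $r\in R$ are strictly decreasing in $r$; and (iii) if an arrow of length $\ell$ and a cell-row of length $\ell$ both appear, the cell-row lies strictly above the arrow, because a row $r'>k-\ell+1$ of $\delta_k$ holds at most $\ell-1$ cells. Together with the swap rule (exchange a row immediately above a strictly longer arrow), items (i)--(iii) force the sorted output to be weakly decreasing with non-overlines preceding the overline among equal parts, which is the overpartition condition; the containment $\mu\subset\delta_{k-1}$ then follows by a straightforward induction on the row index.
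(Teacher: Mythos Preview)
Your proof is correct and follows essentially the same strategy as the paper: construct an explicit inverse and track that each step of $\psi_1$ preserves $|\lambda|+\norm{A}$ and the number of arrows. The one noteworthy difference is in how you invert step~2. The paper reverses the sorting by an iterative process, repeatedly swapping an arrow downward past a shorter non-arrow row until it occupies a full row of $\delta_k$, whereas you compute the target row of each arrow directly as $r=k-\ell+1$ from its length $\ell$ and then distribute the non-overlined parts to the remaining rows in order. These are equivalent: your observation that an arrow of length $\ell$ can only have come from the unique row of $\delta_k$ of that length is exactly what the paper's swapping process implicitly discovers, and the relative order of cell rows is preserved by the bubble-sort in step~2, which justifies your order-preserving redistribution. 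Your verification that $\psi_1^{-1}(\mu)$ lands in $\HDp k$ (in particular the containment $\mu\subset\delta_{k-1}$ after step~2) is a bit more compressed than the paper's, but the ingredients you list in (i)--(iii) are the right ones and the induction is indeed routine.
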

\begin{proof}
  We construct the inverse map of $\psi_1$ as follows. Let $\mu\in \HDp k$. We
  can assume that $\mu$ has $k$ parts by adding parts equal to $0$ if necessary.
  In the Ferrers diagram of $\mu$ with overlined parts replaced with arrows we
  exchange two consequences rows if the following conditions hold: (1) the upper
  row is an arrow and the lower row is not an arrow, and (2) after exchanging
  these rows the resulting diagram is still contained in
  $\delta_k$. Pictorially, this process is as following:
\begin{center}
 \begin{pspicture}(0,0)(5,-2)
    \gcell(2,1)[] \gcell(2,2)[] \gcell(2,3)[]
    \harrow(1,1)[5] \psgrid(0,-1)(3,-2) \psline(0,0)(0,-1)
  \end{pspicture}
\begin{pspicture}(0,0)(4,-2)
   \rput(2,-1){$\Rightarrow$}
  \end{pspicture}
  \begin{pspicture}(0,0)(5,-2)
    \gcell(1,1)[] \gcell(1,2)[] \gcell(1,3)[]
    \harrow(2,1)[5] \psgrid(0,0)(3,-1) \psline(0,-1)(0,-2)
  \end{pspicture}
\end{center}
We do this process until there are no such consecutive rows.

Since $\mu\subset \delta_{k-1}$ and each overlined part is always followed by a
part of smaller size, each arrow must be moved downwards at least once. If an
arrow is moved downwards, then it is still followed by a part of smaller
size. Thus each arrow will be moved downwards until it occupies a whole row of
$\delta_k$. Moreover, at the end, each arrow immediately follows a part of
smaller size or another arrow. We then add the first several cells in the row
occupied by an arrow to the Ferrers diagram of $\mu$ (and decrease the length of
the arrow by the same number) so that this row and the previous row have the
same number of cells of $\mu$ as shown below:
\begin{center}
  \begin{pspicture}(0,0)(5,-2)
    \gcell(1,1)[] \gcell(1,2)[] \gcell(1,3)[]
    \harrow(2,1)[5] \psgrid(0,0)(3,-1) \psline(0,-1)(0,-2)
  \end{pspicture}
\begin{pspicture}(0,0)(4,-2)
   \rput(2,-1){$\Rightarrow$}
  \end{pspicture}
  \begin{pspicture}(0,0)(5,-2)
    \gcell(1,1)[] \gcell(1,2)[] \gcell(1,3)[]
    \gcell(2,1)[] \gcell(2,2)[] \gcell(2,3)[]
    \harrow(2,4)[2] \psgrid(0,0)(3,-2) \psline(0,-1)(0,-2)
  \end{pspicture}
\end{center}
Then the resulting diagram is a half \dkc.. We define $\phi_1(\mu)$ to be this
half \dkc..  It is easy to see that $\phi_1$ is the inverse map of $\psi_1$. Thus
$\psi_1$ is a bijection.  The `moreover' statement is obvious from the
construction of $\psi_1$.
\end{proof}

Now it is easy to prove Proposition~\ref{prop:half_dk}.

\begin{proof}[Proof of Proposition~\ref{prop:half_dk}]
By Lemma~\ref{lem:ov}, it is enough to show the following:
\begin{equation}
  \label{eq:7}
\sum_{\mu\in \OP_k} (-1)^{s(\mu)} q^{|\mu|} = 1.  
\end{equation}

Let $f:\OP_k\to \OP_k$ be the map changing the last part to be overlined if it
is not overlined and vice versa. Then $f$ is a sign-reversing involution on
$\OP_k$ with only one fixed point $\emptyset$. In other words, if
$\mu\ne\emptyset$, then $(-1)^{s(f(\mu))} q^{|f(\mu)|} = - (-1)^{s(\mu)}
q^{|\mu|} $. Thus the sum in \eqref{eq:7} is equal to $(-1)^{s(\emptyset)}
q^{|\emptyset|} =1$.
\end{proof}

\section{\texorpdfstring{Proof of the formula for $q$-secant numbers}{Proof of the formula for q-secant numbers}}
\label{sec:dk_config}

By the fact that $(1-q)\cdot \U=\V$ and Lemmas~\ref{lem1} and \ref{lem2}, the
left-hand side of \eqref{qsec} is equal to
\begin{equation}
  \label{eq:6}
\sum_{p\in\D_n} \wt(p;\U,\U) = \frac{1}{(1-q)^{2n}}
\sum_{k=0}^n \left( \tbinom{2n}{n-k} - \tbinom{2n}{n-k-1} \right)
\sum_{p\in \MD^*_k} \wt(p;\V-\one,\V-\one).
\end{equation}
Thus in order to get \eqref{qsec} it is sufficient to show the following identity: 
\begin{equation}
  \label{eq:12}
\sum_{p\in\MD^*_k} \wt(p;\V-\one,\V-\one) = q^{k(k+1)}\sum_{i=-k}^{k}(-q)^{-i^2}.
\end{equation}
By considering the left-hand side in terms of partitions and arrows as in the
previous section, we are lead to introduce some terminologies as below.

\begin{defn}
  A \emph{\dkpc.}  is a pair $(\lambda,A)$ of a partition
  $\lambda\subset\delta_{k-1}$ and a set $A$ of arrows occupying a whole row or
  a whole column of $\dk/\lambda$ such that no outer corner of $\dk/\lambda$ is
  occupied by two arrows. Here, by an outer corner we mean a cell $c\in
  \dk/\lambda$ such that $\lambda\cup c$ is a partition.  The \emph{length} of
  an arrow is the number of cells it occupies. We denote by $\Dp{k}$ the set of
  \dkpc.s. The \emph{$q$-weight} of a \dkpc. $C=(\lambda,A)$ is defined by
\[\wt_q(C) = (-1)^{|A|} q^{2|\lambda| + \norm{A}},\]
where $\norm{A}$ is the sum of the arrow lengths.
\end{defn}

For example, the $q$-weight of the \dkpc. in Figure~\ref{fig:dk} is $(-1)^7
q^{2\cdot 8 + 1+3+4+3+3+3+2}$.

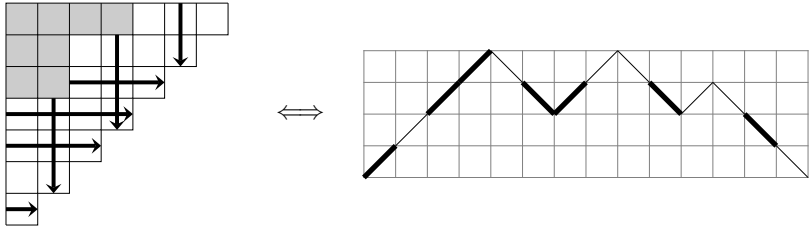
\begin{figure}
 \centering
\begin{pspicture}(0,0)(7,-7)
\gcell(1,1)[]\gcell(1,2)[]\gcell(1,3)[]\gcell(1,4)[]
\gcell(2,1)[]\gcell(2,2)[]
\gcell(3,1)[]\gcell(3,2)[]
\psdk7
\harrow(4,1)[4] \harrow(5,1)[3] \harrow(7,1)[1] \harrow(3,3)[3] 
\varrow(4,2)[3] \varrow(2,4)[3] \varrow(1,6)[2]
\end{pspicture} 
\begin{pspicture}(-4,-1.5)(14,5)
\psgrid[gridcolor=gray](0,0)(14,4)
\rput(-2,2){$\Longleftrightarrow$}
\MUP(0,0)[] \UP(1,1)[$-q^2$] \MUP(2,2)[] \MUP(3,3)[]
\DW(4,4)[$-q^4$] \MDW(5,3)[] \MUP(6,2)[] \UP(7,3)[$-q^4$]
\DW(8,4)[$-q^4$] \MDW(9,3)[] \UP(10,2)[$-q^3$] \DW(11,3)[$-q^3$]
\MDW(12,2)[] \DW(13,1)[$-q$]
\end{pspicture}
\caption{A \dkpc. and the corresponding marked Dyck path, where the marked steps
  are the thicker steps.}
  \label{fig:dk}
\end{figure}

There is a simple bijection between $\MD^*_k$ and $\Dp{k}$ as follows. For
$C=(\lambda,A)\in\Dp{k}$, the north-west border of $\dk/\lambda$ defines a
marked Dyck path of length $2k$ where the marked steps correspond to the
segments on the border with an arrow, see Figure~\ref{fig:dk}. Moreover, if
$p\in \MD^*_k$ corresponds to $C\in\Dp{k}$, one can show that
$\wt(p;\V-1,\V-1) =q^{k(k+1)}\wt_{q^{-1}}(C)$. Thus we obtain the following:

\begin{lem}\label{lem:wt}
For any nonnegative integer $k$, we have 
\[ \sum_{p\in \MD^*_k} \wt(p;\V-1,\V-1) = 
q^{k(k+1)} \sum_{C\in\Dp{k}}\wt_{q^{-1}}(C). \]   
\end{lem}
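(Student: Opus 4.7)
The plan is to promote the bijection $\MD^*_k\leftrightarrow\Dp{k}$ described in the text just before the lemma to a \emph{weight}-preserving bijection, in the precise form
\[ \wt(p;\V-\one,\V-\one)=q^{k(k+1)}\,\wt_{q^{-1}}(C) \]
for each matched pair $(p,C)$. Summing both sides over $C\in\Dp{k}$ then yields the lemma.

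Since $\V-\one=(-q,-q^2,\dots)$ and paths in $\MD^*_k$ have no horizontal steps, the path weight factors as
\[ \wt(p;\V-\one,\V-\one)=\prod_{\text{unmarked }s}(-q^{h(s)})=(-1)^{2k-|A|}\,q^{H_u(p)}=(-1)^{|A|}\,q^{H_u(p)}, \]
where $h(s)$ denotes the upper-endpoint height of step $s$ and $H_u(p)$ is the sum of $h(s)$ over the unmarked steps. Comparing with $q^{k(k+1)}\wt_{q^{-1}}(C)=(-1)^{|A|}q^{k(k+1)-2|\lambda|-\norm{A}}$, the identity reduces to the exponent equality $H_u(p)=k(k+1)-2|\lambda|-\norm{A}$, which I would split into the two independent claims
\[ \sum_{\text{all }s}h(s)=k(k+1)-2|\lambda|\qquad\text{and}\qquad\sum_{\text{marked }s}h(s)=\norm{A}. \]

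The first claim is a standard fact about the Dyck-path/partition correspondence: when $\lambda=\emptyset$, the path is $U^kD^k$ with total height sum $2(1+2+\cdots+k)=k(k+1)$, and each box added to $\lambda$ locally turns a peak UD into a valley DU two units lower, decreasing the total height sum by exactly $2$. The second claim is geometric: under the $45^\circ$ rotation sending $\delta_k/\lambda$ to $p$, a row-arrow of length $\ell$ corresponds to a single marked step whose upper-endpoint height is $\ell$, and symmetrically for column arrows. The main obstacle I expect is the bookkeeping for this second claim: carefully fixing the orientation conventions (row vs.~column, up step vs.~down step) and verifying that the defining condition of $\Dp{k}$ (no outer corner occupied by two arrows) translates exactly into the defining condition of $\MD^*_k$ (no marked peaks). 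Once these local correspondences are checked on a concrete instance such as Figure~\ref{fig:dk}, the general case follows by local inspection at each arrow.
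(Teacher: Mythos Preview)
Your proposal is correct and follows exactly the route the paper takes: the paper asserts (without details) the per-element identity $\wt(p;\V-\one,\V-\one)=q^{k(k+1)}\wt_{q^{-1}}(C)$ under the bijection and deduces the lemma by summing. Your decomposition into the two exponent claims is the right way to fill in what the paper leaves implicit; both claims check out (e.g.\ on the example of Figure~\ref{fig:dk} one gets total height sum $40=7\cdot 8-2\cdot 8$ and marked-step height sum $19=\norm{A}$), and the geometric reason behind the second claim---that the upper-endpoint height of the step associated to row~$i$ (resp.\ column~$j$) equals the number of cells of $\delta_k/\lambda$ in that row (resp.\ column), i.e.\ the arrow length---is exactly what makes the bijection weight-preserving.
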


To complete the bijective proof of \eqref{qsec}, it remains only to show 
that, as announced in the introduction, \dkc.s are a combinatorial model whose weight
sum is $\sum_{i=-k}^{k}(-q)^{-i^2}$.

\begin{thm}\label{thm:dkc} For a nonnegative integer $k$, we have
\[
  \sum_{C\in\Dp{k}}\wt_q(C) = \sum_{i=-k}^{k} (-q)^{i^2}.
\]
Since $\Dp{0}$ only contains $(\emptyset,\emptyset)$ which has weight $1$, this is equivalent to the recurrence:
\begin{equation}
  \sum_{C\in\Dp{k}}\wt_q(C) =  \sum_{C\in\Dp{k-1}}\wt_q(C)  + 2(-q)^{k^2}.  \label{recdelta}
\end{equation}
\end{thm}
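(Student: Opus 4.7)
The plan is to prove the equivalent recurrence \eqref{recdelta} by constructing a sign-reversing, weight-preserving involution $\phi : \Dp{k} \to \Dp{k}$ whose fixed points form the disjoint union of a weight-preserving image of $\Dp{k-1}$ and exactly two extra extremal configurations $C^{+}$ and $C^{-}$.

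The two extremal fixed points are easy to pin down. Take $\lambda = \delta_{k-1}$, so that $\delta_k / \lambda$ is exactly the anti-diagonal $\{(i,\,k-i+1) : 1 \leq i \leq k\}$ of $\delta_k$. Since $\delta_{k-1}$ is the largest partition allowed, every cell of $\delta_k/\lambda$ is an outer corner and may be covered by at most one arrow. Let $C^+$ be the configuration covering each anti-diagonal cell by a length-$1$ row arrow, and $C^-$ the one covering each by a length-$1$ column arrow. Then $|A| = k$, $\norm{A} = k$, and $|\lambda| = \binom{k}{2}$, giving
\[
\wt_q(C^\pm) = (-1)^{k} q^{k(k-1)+k} = (-q)^{k^2},
\]
which accounts for the $2(-q)^{k^2}$ term in the recurrence.

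For the involution, the rough idea is to scan the anti-diagonal cells of $\delta_k$ from $(1,k)$ toward $(k,1)$ and, at the first cell where the configuration deviates from both $C^\pm$ and from the canonical form used to embed $\Dp{k-1}$, apply a local swap move that preserves the quantity $2|\lambda| + \norm{A}$ while toggling the parity of $|A|$. Candidate moves are: trading a length-$2$ arrow for a single cell adjoined to $\lambda$ (since $2 \cdot 1 = 2$); trading one arrow for two shorter arrows of the same total length (when the surrounding geometry permits it); and pivoting between a row arrow and a column arrow together with a compensating update of $\lambda$. Non-fixed points then pair up in sign-reversing fashion, while the fixed points decompose, outside $\{C^+, C^-\}$, into a weight-preserving image of $\Dp{k-1}$, allowing the induction to close.

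The principal obstacle will be to craft one globally consistent swap rule for the involution. The outer-corner constraint, which forbids two arrows at an outer corner of $\delta_k/\lambda$, tightly couples row and column arrows near the boundary of $\lambda$, and the various local moves must fit together into a genuine involution without ambiguity. This will require a careful case analysis on the local configuration at the scanning cell — depending on whether it is an outer corner, on how arrows in its row and column interact with $\lambda$, and on the nearby pattern of covered cells — and I expect this case analysis to constitute the bulk of the proof.
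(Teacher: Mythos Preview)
Your proposal is a plan, not a proof: you correctly identify the strategy (sign-reversing involution, two fixed points each of weight $(-q)^{k^2}$, remaining fixed points in weight-preserving bijection with $\Dp{k-1}$), but you explicitly defer the construction of the involution to a future ``careful case analysis''. That case analysis \emph{is} the theorem; nothing has been proved yet.

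The paper carries out exactly this strategy, but with a structural idea you are missing. The difficulty you anticipate --- that the outer-corner constraint couples rows and columns and there is no obvious weight-preserving copy of $\Dp{k-1}$ inside $\Dp{k}$ --- is real: if you try to embed $(\lambda,A)\in\Dp{k-1}$ into $\Dp{k}$ by extending each arrow to fill the longer row/column of $\delta_k/\lambda$, every arrow grows by one and the weight changes. The paper's solution is not to work on $\Dp{k}$ directly, but to enlarge the notion of $\dk$-configuration to allow ``$(k-1)$-arrows'' (arrows occupying a row or column of $\delta_{k-1}/\lambda$ rather than $\delta_k/\lambda$), define an auxiliary set $\Dm{k}$ of such configurations, and build a weight-preserving bijection $\psi:\Dp{k}\to\Dm{k}$ as a composite of four local moves (ascending, filling, shrinking, filling). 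Then $\Dp{k-1}$ sits \emph{literally} inside $\Dm{k}$ (its arrows are just $(k-1)$-arrows), and the sign-reversing involution on $\Dm{k}\setminus\Dp{k-1}$ becomes a simple local toggle: add or delete a single zero-length arrow in one $2\times2$ ``miniature'' along the anti-diagonal. Incidentally, the two fixed points that survive are not your $C^{\pm}$ (all horizontal / all vertical) but the two configurations with $\lambda=\delta_{k-1}$ and arrows \emph{alternating} horizontal/vertical along the anti-diagonal. That difference is not a contradiction --- different involutions may have different fixed-point sets --- but it signals that the naive scan-and-swap you sketch is unlikely to close without the intermediate passage through $\Dm{k}$.
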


We will prove \eqref{recdelta} in the next section.
This identity suggests that $\Dp{k-1}$ can be seen as a subset of $\Dp{k}$
so that the complement has weight sum $2(-q)^{k^2}$.
Actually, we will do this in the other direction, by transforming certain
\dkpc.s into $\delta^+_{k-1}$-configurations.
The key idea of this transformation is ``moving'' the arrows upwards or to the left
to decrease the arrow length by $1$.

\section{The proof of Theorem~\ref{thm:dkc}}

We prove here Theorem~\ref{thm:dkc}, which is the last step of the
bijective proof  of \eqref{qsec}. It states that the generating function of $\Dp{k}$ is the one
of $\Dp{k-1}$ plus $2(-q)^{k^2}$. The general idea is to:
\begin{itemize} 
 \item define a set $\Dm k$ which is in (weight-preserving) bijection with $\Dp k$ and contains $\Dp {k-1}$,
 \item give a sign-reversing involution showing that $\Dm k - \Dp {k-1}$  has
   weight sum $2(-q)^{k^2}$.
\end{itemize}
The proof of Theorem~\ref{thm:dkc} will immediately follow these constructions.

We have defined \dkpc.s before. In this section we introduce more general
objects and redefine \dkpc.s in this general context.

\subsection{Definitions}

\begin{defn}
  A \emph{\dkc.}  is a pair $(\lambda,A)$ of a partition
  $\lambda\subset\delta_{k-1}$ and a set $A$ of arrows occupying a whole row or
  a whole column of $\dk/\lambda$ or $\delta_{k-1}/\lambda$. If the cells
  occupied by an arrow is a row or a column of $\dk/\lambda$
  (resp.~$\delta_{k-1}/\lambda$), we call it a $k$-arrow (resp.~$(k-1)$-arrow).
  The \emph{length} of an arrow is the number of cells occupied by the arrow. An
  \emph{inner corner} (resp.~\emph{outer corner}) is a cell $c\in\lambda$
  (resp.~$c\in\dk/\lambda$) such that $\lambda\setminus\{c\}$
  (resp.~$\lambda\cup\{c\}$) is a partition.  A \emph{forbidden corner} is an
  outer corner which is occupied by two $k$-arrows.  A \emph{\dkpc.} is a
  \dkc. satisfying the following.
  \begin{enumerate}
  \item There is no forbidden corner.
  \item There is no $(k-1)$-arrow.
  \end{enumerate}
We define the \emph{$q$-weight}  of a \dkc. $C=(\lambda,A)$ to be
\[\wt_q(C) = (-1)^{|A|} q^{2|\lambda| + \norm{A}},\]
where $\norm{A}$ is the sum of the arrow lengths.
\end{defn}

Note that a \dkc. may have an arrow (necessarily a $(k-1)$-arrow) of length
0. We will represent such an arrow as a half dot, see the bottom left diagram in
Figure~\ref{fig:psi}. 

We need several terminologies. For a partition $\lambda$, we denote its
transposition by $\lambda^{tr}$. Let $(\lambda,A)$ be a \dkc.. By Row $i$
(resp.~Column $i$), we mean the $i$th uppermost row (resp.~leftmost column).  An
arrow of $(\lambda,A)$ is \emph{ascendible} if the following hold.
\begin{enumerate}
\item It is a $k$-arrow.
\item If it is a horizontal (resp.~vertical) arrow in Row $i$ (resp.~Column
  $i$), then $i\geq2$, $\lambda_{i-1}=\lambda_i$
  (resp.~$\lambda^{tr}_{i-1}=\lambda^{tr}_i$) and there is no arrow in Row $i-1$
  (resp.~Column $i-1$).
\end{enumerate}
By \emph{ascending an (ascendible) arrow}, we mean that we move the arrow one
step upward or to the left as shown below:
\begin{center}
  \begin{pspicture}(0,0)(3,-2)
\gcell(1,1)[] \gcell(2,1)[] \harrow(2,2)[1] \psgrid(0,0)(1,-2)
  \end{pspicture}
  \begin{pspicture}(-2,0)(3,-2)
\rput(-2,-1){$\Rightarrow$}
\gcell(1,1)[] \gcell(2,1)[] \harrow(1,2)[1] \psgrid(0,0)(1,-2)
  \end{pspicture}
  \begin{pspicture}(-2,0)(3,-2)
\rput(-2,-1){or}
\gcell(1,1)[] \gcell(1,2)[] \varrow(2,2)[1] \psgrid(0,0)(2,-1)
  \end{pspicture}
  \begin{pspicture}(-2,0)(2.2,-2)
\rput(-1.5,-1){$\Rightarrow$}
\gcell(1,1)[] \gcell(1,2)[] \varrow(2,1)[1]\psgrid(0,0)(2,-1)
  \end{pspicture}.
\end{center}

Similarly, an arrow of $(\lambda,A)$ is \emph{descendible} if the following
hold.
\begin{enumerate}
\item It is a $(k-1)$-arrow.
\item If it is a horizontal (resp.~vertical) arrow in Row $i$ (resp.~Column
  $i$), then $i\leq k-1$ and $\lambda_{i}=\lambda_{i+1}$
  (resp.~$\lambda^{tr}_{i}=\lambda^{tr}_{i+1}$), and there is no arrow in Row
  $i+1$ (resp.~Column $i+1$).
\end{enumerate}
By \emph{descending a (descendible) arrow}, we mean that we move the arrow one
step downward or to the right. Thus descending is the inverse operation of
ascending.

We say that a horizontal (resp.~vertical) arrow starts from an (inner or outer)
corner if the arrow and the corner lie in the same row (resp.~column). Note that
an arrow may start from an outer corner and an inner corner simultaneously. We
also say that a corner has an arrow if the arrow starts from the corner. 

An arrow of $(\lambda,A)$ is \emph{shrinkable} if the following hold.
\begin{enumerate}
\item It is a $k$-arrow of length at least $2$.
\item It starts from an outer corner which has only one arrow.
\end{enumerate}
By \emph{shrinking a (shrinkable) arrow} $u$, we mean that we add the outer corner where
$u$ starts to $\lambda$ and decrease the length of $u$ by $2$ as shown below:
\begin{center}
  \begin{pspicture}(0,0)(4,-2.5)
    \psline(1,0)(0,0)(0,-1)
    \harrow(1,1)[4] \rput(2,0){$s$}
  \end{pspicture}
  \begin{pspicture}(-2,0)(4,-2.5)
    \rput(-1,-.5){$\Rightarrow$}
 \gcell(1,1)[]    \psgrid(0,0)(1,-1)
    \harrow(1,2)[2]    \rput(2,0){$s-2$}
  \end{pspicture}
\begin{pspicture}(-4,0)(1,-4)
    \rput(-2,-2){or}
   \psline(1,0)(0,0)(0,-1)
    \varrow(1,1)[4] \rput[l](1,-2){$s$}
  \end{pspicture}
\begin{pspicture}(-2,0)(3,-4)
    \rput(-1,-2){$\Rightarrow$}
    \gcell(1,1)[] \psgrid(0,0)(1,-1)
    \varrow(2,1)[2] \rput[l](1,-2){$s-2$}
  \end{pspicture}.
\end{center}

An arrow of $(\lambda,A)$ is \emph{stretchable} if the following hold.
\begin{enumerate}
\item It is a $(k-1)$-arrow.
\item It starts from an inner corner which has only one arrow.
\end{enumerate}
By \emph{stretching a (stretchable) arrow} $u$, we mean that we delete the inner corner
where $u$ starts from $\lambda$ and increase the length of $u$ by $2$. Note that
stretching is the inverse operation of shrinking.

An outer corner of $(\lambda,A)$ is \emph{fillable} if it has two arrows of
length at least $1$ and at least one of them is a $(k-1)$-arrow. By
\emph{filling a (fillable) corner}, we mean that we add the outer corner to
$\lambda$ and decrease the lengths of the two arrows by $1$ as shown below:
\begin{center}
  \begin{pspicture}(0,0.2)(3,-3)
    \psline(1,0)(0,0)(0,-1)
    \harrow(1,1)[3]     \varrow(1,1)[3] \rput(1,-2){$r$} \rput(2,0){$s$}
  \end{pspicture}
  \begin{pspicture}(-2,0.2)(3.2,-3)
    \rput(-1,-1.5){$\Rightarrow$}
    \gcell(1,1)[] \psgrid(0,0)(1,-1)
    \harrow(1,2)[2]     \varrow(2,1)[2] \rput[l](1,-2){$r-1$} \rput(2,0){$s-1$}
  \end{pspicture}.
\end{center}

An inner corner of $(\lambda,A)$ is \emph{removable} if it has two arrows at
least one of which is a $(k-1)$-arrow. By \emph{removing a (removable) corner},
we mean that we delete the inner corner from $\lambda$ and increase the lengths
of the two arrows by $1$. Note that removing is the inverse operation of
filling.

We define $\Ascend(\lambda,A)$ to be the \dkc. obtained by ascending 
ascendible arrows as many times until there is no ascendible arrow. We define
$\Descend(\lambda,A)$, $\Shrink(\lambda,A)$, $\Stretch(\lambda,A)$,
$\Fill(\lambda,A)$ and $\Remove(\lambda,A)$ in the same way. 
See Figures~\ref{fig:psi} and \ref{fig:phi}.

\begin{figure}
  \centering
\begin{pspicture}(0,0)(8,-8)
\gcell(1,1)[]  \gcell(1,2)[]
\gcell(2,1)[]  \gcell(2,2)[]
\gcell(3,1)[]  \gcell(3,2)[]
\gcell(4,1)[]  \gcell(4,2)[]
\gcell(5,1)[]
\gcell(6,1)[]
\psdk8
\harrow(8,1)[1] \harrow(7,1)[2] \harrow(5,2)[3] \harrow(4,3)[3] \harrow(2,3)[5]
\varrow(1,3)[6] \varrow(1,4)[5] \varrow(1,5)[4] \varrow(1,7)[2]
\end{pspicture}
\begin{pspicture}(0,0)(2,-8)
\rput(0,-5){$\Ascend$}
\rput(0,-6){$\Longrightarrow$}
\end{pspicture}
\begin{pspicture}(0,0)(8,-8)
\gcell(1,1)[]  \gcell(1,2)[]
\gcell(2,1)[]  \gcell(2,2)[]
\gcell(3,1)[]  \gcell(3,2)[]
\gcell(4,1)[]  \gcell(4,2)[]
\gcell(5,1)[]
\gcell(6,1)[]
\psdk8
\harrow(8,1)[1] \harrow(7,1)[2] \harrow(5,2)[3] \harrow(3,3)[3] \harrow(1,3)[5]
\varrow(1,3)[6] \varrow(1,4)[5] \varrow(1,5)[4] \varrow(1,6)[2]
\end{pspicture}
\begin{pspicture}(0,0)(2,-8)
\rput(0,-5){$\Fill$}
\rput(0,-6){$\Longrightarrow$}
\end{pspicture}
\begin{pspicture}(0,0)(8,-8)
\gcell(1,1)[]  \gcell(1,2)[]   \gcell(1,3)[]   \gcell(1,4)[]   \gcell(1,5)[]  \gcell(1,6)[]
\gcell(2,1)[]  \gcell(2,2)[]
\gcell(3,1)[]  \gcell(3,2)[]
\gcell(4,1)[]  \gcell(4,2)[]
\gcell(5,1)[]
\gcell(6,1)[]
\psdk8
\harrow(8,1)[1] \harrow(7,1)[2] \harrow(5,2)[3] \harrow(3,3)[3] \harrow(1,7)[1]
\varrow(2,3)[5] \varrow(2,4)[4] \varrow(2,5)[3] \varrow(2,6)[1]
\end{pspicture}

\begin{pspicture}(0,0)(28,2)
\rput(25,1){$||$} \rput(3,1){$\psi$} \rput(4,1){$\Downarrow$}
\end{pspicture}

\begin{pspicture}(0,0)(8,-8)
\gcell(1,1)[]  \gcell(1,2)[]   \gcell(1,3)[]   \gcell(1,4)[]   \gcell(1,5)[]  \gcell(1,6)[]
\gcell(2,1)[]  \gcell(2,2)[]   \gcell(2,3)[]   \gcell(2,4)[]   \gcell(2,5)[]
\gcell(3,1)[]  \gcell(3,2)[]   \gcell(3,3)[]   \gcell(3,4)[]   \gcell(3,5)[]
\gcell(4,1)[]  \gcell(4,2)[]
\gcell(5,1)[]  \gcell(5,2)[]
\gcell(6,1)[]
\gcell(7,1)[]
\psdk8
\harrow(8,1)[1] \hzero(7,2) \harrow(5,3)[1] \hzero(3,6) \harrow(1,7)[1]
\varrow(4,3)[2] \varrow(4,4)[1] \vzero(4,5) \varrow(2,6)[1]
\end{pspicture}
\begin{pspicture}(0,0)(2,-8)
\rput(0,-5){$\Fill$}
\rput(0,-6){$\Longleftarrow$}
\end{pspicture}
\begin{pspicture}(0,0)(8,-8)
\gcell(1,1)[]  \gcell(1,2)[]   \gcell(1,3)[]   \gcell(1,4)[]   \gcell(1,5)[]  \gcell(1,6)[]
\gcell(2,1)[]  \gcell(2,2)[]   \gcell(2,3)[]   \gcell(2,4)[]   \gcell(2,5)[]
\gcell(3,1)[]  \gcell(3,2)[]
\gcell(4,1)[]  \gcell(4,2)[]
\gcell(5,1)[]  \gcell(5,2)[]
\gcell(6,1)[]
\gcell(7,1)[]
\psdk8
\harrow(8,1)[1] \hzero(7,2) \harrow(5,3)[1] \harrow(3,3)[3] \harrow(1,7)[1]
\varrow(3,3)[3] \varrow(3,4)[2] \varrow(3,5)[1] \varrow(2,6)[1]
\end{pspicture}
\begin{pspicture}(0,0)(2,-8)
\rput(0,-5){$\Shrink$}
\rput(0,-6){$\Longleftarrow$}
\end{pspicture}
\begin{pspicture}(0,0)(8,-8)
\gcell(1,1)[]  \gcell(1,2)[]   \gcell(1,3)[]   \gcell(1,4)[]   \gcell(1,5)[]  \gcell(1,6)[]
\gcell(2,1)[]  \gcell(2,2)[]
\gcell(3,1)[]  \gcell(3,2)[]
\gcell(4,1)[]  \gcell(4,2)[]
\gcell(5,1)[]
\gcell(6,1)[]
\psdk8
\harrow(8,1)[1] \harrow(7,1)[2] \harrow(5,2)[3] \harrow(3,3)[3] \harrow(1,7)[1]
\varrow(2,3)[5] \varrow(2,4)[4] \varrow(2,5)[3] \varrow(2,6)[1]
\end{pspicture}
\caption{The map $\psi$ is the composition
  $\Fill\circ\Shrink\circ\Fill\circ\Ascend$. The upper right configuration is
  equal to the lower right configuration.}
  \label{fig:psi}
\end{figure}
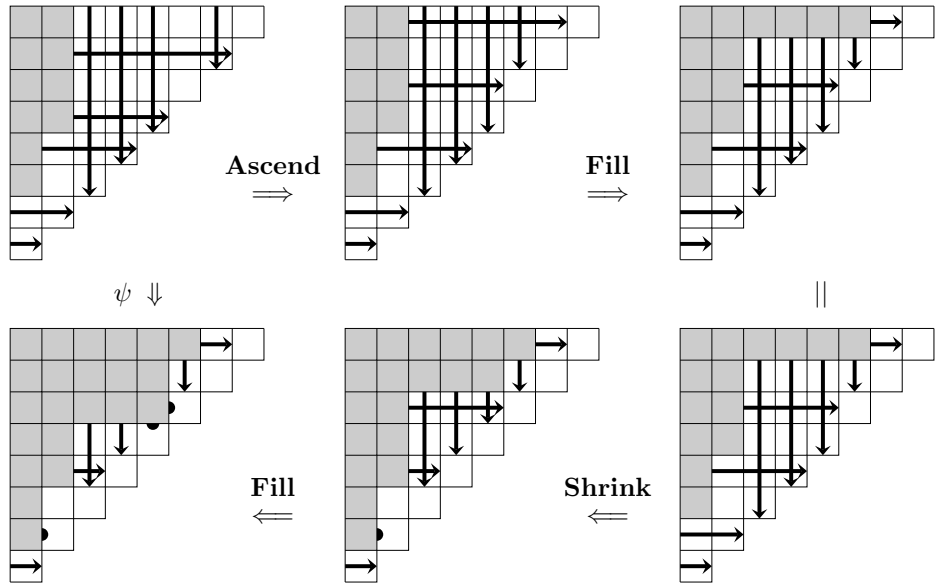

\begin{figure}
  \centering
\begin{pspicture}(0,0)(8,-8)
\gcell(1,1)[]  \gcell(1,2)[]   \gcell(1,3)[]   \gcell(1,4)[]   \gcell(1,5)[]  \gcell(1,6)[]
\gcell(2,1)[]  \gcell(2,2)[]   \gcell(2,3)[]   \gcell(2,4)[]   \gcell(2,5)[]
\gcell(3,1)[]  \gcell(3,2)[]   \gcell(3,3)[]   \gcell(3,4)[]   \gcell(3,5)[]
\gcell(4,1)[]  \gcell(4,2)[]
\gcell(5,1)[]  \gcell(5,2)[]
\gcell(6,1)[]
\gcell(7,1)[]
\psdk8
\harrow(8,1)[1] \hzero(7,2) \harrow(5,3)[1] \hzero(3,6) \harrow(1,7)[1]
\varrow(4,3)[2] \varrow(4,4)[1] \vzero(4,5) \varrow(2,6)[1]
\end{pspicture}
\begin{pspicture}(0,0)(2,-8)
\rput(0,-5){$\Remove$}
\rput(0,-6){$\Longrightarrow$}
\end{pspicture}
\begin{pspicture}(0,0)(8,-8)
\gcell(1,1)[]  \gcell(1,2)[]   \gcell(1,3)[]   \gcell(1,4)[]   \gcell(1,5)[]
\gcell(2,1)[]  \gcell(2,2)[]   \gcell(2,3)[]   \gcell(2,4)[]   \gcell(2,5)[]
\gcell(3,1)[]  \gcell(3,2)[]
\gcell(4,1)[]  \gcell(4,2)[]
\gcell(5,1)[]  \gcell(5,2)[]
\gcell(6,1)[]
\gcell(7,1)[]
\psdk8
\harrow(8,1)[1] \hzero(7,2) \harrow(5,3)[1] \harrow(3,3)[3] \harrow(1,6)[2]
\varrow(3,3)[3] \varrow(3,4)[2] \varrow(3,5)[1] \varrow(1,6)[2]
\end{pspicture}
\begin{pspicture}(0,0)(2,-8)
\rput(0,-5){$\Stretch$}
\rput(0,-6){$\Longrightarrow$}
\end{pspicture}
\begin{pspicture}(0,0)(8,-8)
\gcell(1,1)[]  \gcell(1,2)[]   \gcell(1,3)[]   \gcell(1,4)[]   \gcell(1,5)[]
\gcell(2,1)[]  \gcell(2,2)[]
\gcell(3,1)[]  \gcell(3,2)[]
\gcell(4,1)[]  \gcell(4,2)[]
\gcell(5,1)[]
\gcell(6,1)[]
\psdk8
\harrow(8,1)[1] \harrow(7,1)[2] \harrow(5,2)[3] \harrow(3,3)[3] \harrow(1,6)[2]
\varrow(2,3)[5] \varrow(2,4)[4] \varrow(2,5)[3] \varrow(1,6)[2]
\end{pspicture}

\begin{pspicture}(0,0)(28,2)
\rput(25,1){$||$} \rput(3,1){$\phi$} \rput(4,1){$\Downarrow$}
\end{pspicture}

\begin{pspicture}(0,0)(8,-8)
\gcell(1,1)[]  \gcell(1,2)[]
\gcell(2,1)[]  \gcell(2,2)[]
\gcell(3,1)[]  \gcell(3,2)[]
\gcell(4,1)[]  \gcell(4,2)[]
\gcell(5,1)[]
\gcell(6,1)[]
\psdk8
\harrow(8,1)[1] \harrow(7,1)[2] \harrow(5,2)[3] \harrow(4,3)[3] \harrow(2,3)[5]
\varrow(1,3)[6] \varrow(1,4)[5] \varrow(1,5)[4] \varrow(1,7)[2]
\end{pspicture}
\begin{pspicture}(0,0)(2,-8)
\rput(0,-5){$\Descend$}
\rput(0,-6){$\Longleftarrow$}
\end{pspicture}
\begin{pspicture}(0,0)(8,-8)
\gcell(1,1)[]  \gcell(1,2)[]
\gcell(2,1)[]  \gcell(2,2)[]
\gcell(3,1)[]  \gcell(3,2)[]
\gcell(4,1)[]  \gcell(4,2)[]
\gcell(5,1)[]
\gcell(6,1)[]
\psdk8
\harrow(8,1)[1] \harrow(7,1)[2] \harrow(5,2)[3] \harrow(3,3)[3] \harrow(1,3)[5]
\varrow(1,3)[6] \varrow(1,4)[5] \varrow(1,5)[4] \varrow(1,6)[2]
\end{pspicture}
\begin{pspicture}(0,0)(2,-8)
\rput(0,-5){$\Remove$}
\rput(0,-6){$\Longleftarrow$}
\end{pspicture}
\begin{pspicture}(0,0)(8,-8)
\gcell(1,1)[]  \gcell(1,2)[]   \gcell(1,3)[]   \gcell(1,4)[]   \gcell(1,5)[]
\gcell(2,1)[]  \gcell(2,2)[]
\gcell(3,1)[]  \gcell(3,2)[]
\gcell(4,1)[]  \gcell(4,2)[]
\gcell(5,1)[]
\gcell(6,1)[]
\psdk8
\harrow(8,1)[1] \harrow(7,1)[2] \harrow(5,2)[3] \harrow(3,3)[3] \harrow(1,6)[2]
\varrow(2,3)[5] \varrow(2,4)[4] \varrow(2,5)[3] \varrow(1,6)[2]
\end{pspicture}
\caption{The map $\phi$ is the composition $\Descend \circ \Remove \circ \Stretch \circ \Remove$.
The upper right configuration is equal to the lower right configuration.}
  \label{fig:phi}
\end{figure}
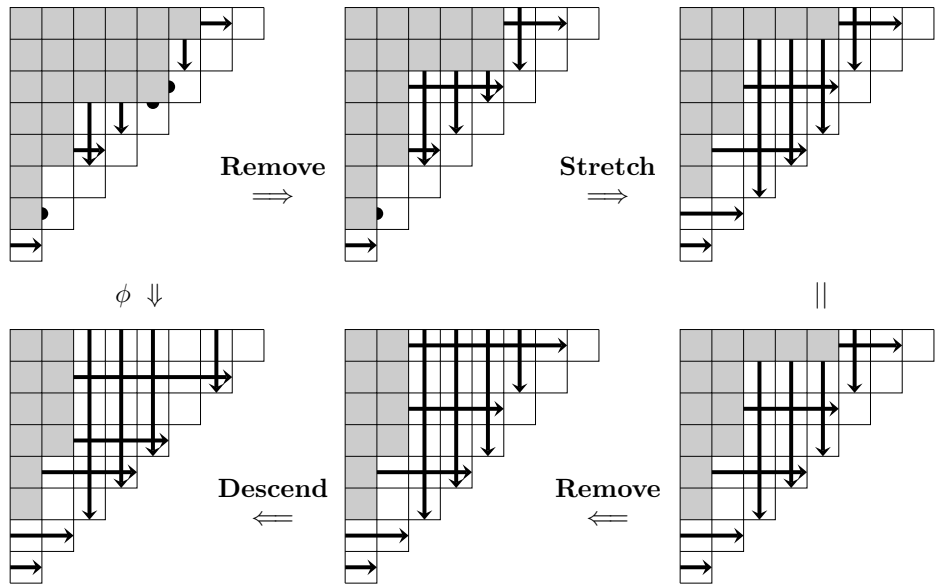

\subsection{Weight-preserving bijections}

We define the map $\psi$ to be the composition
$\Fill\circ\Shrink\circ\Fill\circ\Ascend$, and $\phi$ to be the composition
$\Descend \circ \Remove \circ \Stretch \circ \Remove$. See Figures~\ref{fig:psi}
and \ref{fig:phi}.

A \emph{miniature} of a \dkc. $(\lambda,A)$ is the restriction of $(\lambda,A)$
to the three cells $(k-i,i)$, $(k-i,i+1)$ and $(k-i+1,i)$ for some $1\leq i\leq
k-1$, where any $(k-1)$-arrows in Column $i+1$ or Row $k-i+1$ are ignored (the
cell $(i,j)$ is the cell in Row $i$ and Column $j$).  For example, the
miniatures of
\begin{center}
  \begin{pspicture}(0,0)(5,-5)
    \gcell(1,1)[] \gcell(1,2)[] \gcell(1,3)[] \gcell(1,4)[]
    \gcell(2,1)[] \gcell(2,2)[]
    \psdk5 \vzero(2,4) \varrow(2,3)[1] \harrow(3,1)[3] \harrow(4,1)[1]
  \end{pspicture}
\end{center}
are the following (the bottom miniature is drawn to the left):
\begin{center}
  \begin{pspicture}(0,0)(2,-2)
\psdk2 \harrow(1,1)[1]
 \end{pspicture}, \qquad
 \begin{pspicture}(0,0)(2,-2)
\psdk2 \harrow(1,1)[2]
  \end{pspicture}, \qquad
  \begin{pspicture}(0,0)(2,-2)
\psdk2 \varrow(1,1)[1] \harrow(2,1)[1]
  \end{pspicture}, \qquad
\begin{pspicture}(0,0)(2,-2)
    \gcell(1,1)[]
\psdk2 \vzero(2,1)
  \end{pspicture}.
\end{center}

\begin{defn}\label{def:dm}
  A \emph{\dkmc.} $(\lambda,A)$ is a \dkc. satisfying the following.
  \begin{enumerate}
  \item There is neither fillable corner nor forbidden corner.
  \item Neither Row $k$ nor Column $k$ has an arrow of length $0$.
 \item Each $k$-arrow is of length $1$.
 \item For any miniature $M$, if there is a horizontal (resp.~vertical)
   $k$-arrow in the bottom (resp.~right) cell, then the middle cell is contained
   in $\lambda$. Moreover, if the bottom (resp.~left) cell has a horizontal
   (resp.~vertical) $k$-arrow and a vertical (resp.~horizontal) $(k-1)$-arrow,
   then the left (resp.~bottom) cell has a horizontal (resp.~vertical)
   $k$-arrow. Pictorially, these mean the following:
    \begin{center}
   \begin{pspicture}(0,0)(2,-2)
       \cell(1,1)[$?$] \cell(1,2)[$?$] \cell(2,1)[] \psline(2,0)(0,0)(0,-2) \harrow(2,1)[1]
    \end{pspicture}
   \begin{pspicture}(-2,0)(2,-2)
\rput(-1,-1){$\Rightarrow$}
     \Gcell(1,1)[] \cell(1,2)[$?$] \cell(2,1)[]  \psline(2,0)(0,0)(0,-2) \harrow(2,1)[1]
    \end{pspicture}
   \begin{pspicture}(-4,0)(2,-2)
\rput(-2,-1){and}
       \Gcell(1,1)[] \cell(1,2)[$?$] \cell(2,1)[] \psline(2,0)(0,0)(0,-2)
      \harrow(2,1)[1] \vzero(2,1)
   \end{pspicture}
   \begin{pspicture}(-2,0)(2,-2)
\rput(-1,-1){$\Rightarrow$}
      \Gcell(1,1)[] \cell(1,2)[] \cell(2,1)[]  \psline(2,0)(0,0)(0,-2)
      \harrow(2,1)[1] \vzero(2,1) \harrow(1,2)[1]
    \end{pspicture},

   \begin{pspicture}(0,0)(2,-2)
      \cell(1,1)[$?$] \cell(1,2)[] \cell(2,1)[$?$] \psline(2,0)(0,0)(0,-2) \varrow(1,2)[1]
    \end{pspicture}
   \begin{pspicture}(-2,0)(2,-2)
\rput(-1,-1){$\Rightarrow$}
       \Gcell(1,1)[] \cell(1,2)[] \cell(2,1)[$?$] \psline(2,0)(0,0)(0,-2) \varrow(1,2)[1]
    \end{pspicture}
   \begin{pspicture}(-4,0)(2,-2)
\rput(-2,-1){and}
      \Gcell(1,1)[] \cell(1,2)[] \cell(2,1)[$?$] \psline(2,0)(0,0)(0,-2)
      \varrow(1,2)[1] \hzero(1,2)
   \end{pspicture}
   \begin{pspicture}(-2,1)(2,-2)
\rput(-1,-1){$\Rightarrow$}
      \Gcell(1,1)[] \cell(1,2)[] \cell(2,1)[] \psline(2,0)(0,0)(0,-2)
      \varrow(1,2)[1] \hzero(1,2) \varrow(2,1)[1]
    \end{pspicture}.
   \end{center}
 \end{enumerate}
We denote by $\Dm{k}$ the set of \dkmc.s.
\end{defn}

\begin{prop}\label{prop:wt-bij}
  The map $\psi$ is a weight-preserving bijection from $\Dp k$ to $\Dm k$. The
  inverse map is $\phi$.
\end{prop}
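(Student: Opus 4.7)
The plan is to establish that $\psi$ and $\phi$ are mutually inverse weight-preserving maps, with $\psi(\Dp k)\subseteq\Dm k$ and $\phi(\Dm k)\subseteq\Dp k$. I would proceed in three stages: weight preservation of each elementary operation, target-set verification, and mutual inversion.

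Weight preservation is purely local. Ascending and descending move an arrow by one cell without changing $\lambda$, $|A|$, or $\norm{A}$, hence preserve $\wt_q$. Filling adds one cell to $\lambda$ (contributing $q^{+2}$ to $q^{2|\lambda|}$) while decreasing two arrow lengths by $1$ (contributing $q^{-2}$ to $q^{\norm{A}}$), and leaves $|A|$ unchanged; removing is its inverse. Shrinking adds one cell ($q^{+2}$) and decreases a single arrow length by $2$ ($q^{-2}$); stretching is its inverse. Consequently both $\psi$ and $\phi$ are weight-preserving, and it suffices to check the set-theoretic parts of the claim.

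To show $\psi(C)\in\Dm k$ for $C\in\Dp k$, I would follow the configuration through the four steps of $\psi$. The $\Ascend$ step pushes every $k$-arrow as far northwest as possible. The first $\Fill$ then resolves every outer corner that already carries a $(k-1)$-arrow together with a second arrow. Next, $\Shrink$ turns each $k$-arrow of length $\geq 2$ starting from an isolated outer corner into a shorter $(k-1)$-arrow by absorbing that corner into $\lambda$, and the terminal $\Fill$ cleans up any freshly fillable corners created by the previous step. I would then match each condition of Definition~\ref{def:dm} with the corresponding saturation: (1) no fillable corner survives because we finished with $\Fill$; (3) each $k$-arrow has length $1$ because $\Shrink$ has been exhausted; (2) zero-length arrows cannot lie in Row $k$ or Column $k$ since $\dk$ has only one cell there and the operations that create zero-length arrows do not reach these positions; and (4) the local miniature conditions simply record the stable patterns left once the composition $\Fill\circ\Shrink\circ\Fill\circ\Ascend$ can no longer fire.

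The mutual inversion $\phi\circ\psi=\mathrm{id}_{\Dp k}$ and $\psi\circ\phi=\mathrm{id}_{\Dm k}$ then follows from the pointwise duality between the elementary operations, provided one checks that on each intermediate configuration the next inverse operation is applicable: descendibility, removability, and stretchability on the output of $\psi$ must be exactly the ``footprint'' left by ascending, filling, and shrinking in the forward direction. Symmetrically, running $\phi$ on a configuration in $\Dm k$ yields something in $\Dp k$. The main obstacle I anticipate is the verification of condition~(4) of Definition~\ref{def:dm}: one must show by a careful case analysis of the three-cell local patterns along the northwest boundary of $\lambda$ that these miniature rules capture \emph{exactly} the configurations surviving after $\psi$ stabilises, and dually that they furnish exactly the patterns on which $\phi$ can undo the composition step by step without ambiguity. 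Verifying this equivalence in every local case, and dealing with the interaction between the second $\Fill$ and the zero-length $(k-1)$-arrows it may leave behind, is where the bulk of the combinatorial work will lie.
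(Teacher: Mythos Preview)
Your overall plan---weight preservation locally, then $\psi(\Dp k)\subseteq\Dm k$, then $\phi(\Dm k)\subseteq\Dp k$, then mutual inversion---matches the paper's structure, and your weight-preservation paragraph is fine. The genuine gap is in your verification of condition~(3) of Definition~\ref{def:dm}.

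You claim ``each $k$-arrow has length~$1$ because $\Shrink$ has been exhausted'', but this is not sufficient: a $k$-arrow is shrinkable only if it starts from an outer corner carrying exactly one arrow. A $k$-arrow of length $\geq 2$ sitting in Row~$i$ with $\lambda_{i-1}=\lambda_i$ does not start from an outer corner at all, so $\Shrink$ never touches it. What actually rules this out is that such an arrow would be \emph{ascendible}---and the paper first proves a separate lemma (Lemma~\ref{lem:no ascendible}) that no sequence of fillings and shrinkings applied after $\Ascend$ can ever re-create an ascendible arrow. A companion lemma (Lemma~\ref{lem:valid}) shows that one can never produce two horizontal arrows stacked in adjacent rows with equal $\lambda$-parts (an ``invalid'' pattern). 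Both lemmas are needed in the case analysis for condition~(3) \emph{and} condition~(4); without them, the cases you list as ``stable patterns left once $\psi$ can no longer fire'' are not actually forced. You anticipated that condition~(4) is the hard part, but you should also expect condition~(3) to require these two preparatory lemmas, not just exhaustion of $\Shrink$.

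A smaller omission: condition~(1) of Definition~\ref{def:dm} also requires no \emph{forbidden} corner (an outer corner with two $k$-arrows), which you do not address; the paper notes that none of $\Fill$, $\Shrink$, $\Ascend$ can create one. Otherwise your outline is on the right track and coincides with the paper's approach.
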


Before proving Proposition~\ref{prop:wt-bij} in the next subsection, let us
show that it permits to complete the proof of Theorem~\ref{thm:dkc}, following the scheme
given at the beginning of this section.

Note that we can consider $C\in\Dp{k-1}$ as a \dkc. with only $(k-1)$-arrows.
Using this identification we can easily see that $\Dp{k-1}\subset\Dm{k}$.

\begin{lem}\label{lem:exception}
  We have $\Dp{k-1}\subset\Dm{k}$ and the elements in $\Dm{k}\setminus\Dp{k-1}$
  are exactly those which contain at least one miniature belonging to the list
  in Figure~\ref{fig:list}.
\end{lem}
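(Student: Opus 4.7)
The plan is to check the inclusion $\Dp{k-1}\subset\Dm{k}$ axiom by axiom, then to match obstructions to membership in $\Dp{k-1}$ against the pictures of Figure~\ref{fig:list}. Given $C=(\lambda,A)\in\Dp{k-1}$ I view every arrow as a $(k-1)$-arrow of a \dkc., which is legal since $\lambda\subset\delta_{k-2}\subset\delta_{k-1}$ and each arrow already occupies a row or column of $\delta_{k-1}/\lambda$. The four items of Definition~\ref{def:dm} are then immediate: with no $k$-arrows, items~(3), (4) and the forbidden-corner half of~(1) are vacuous; the $\Dp{k-1}$-axiom that no outer corner carries two arrows rules out any fillable corner; and Row $k$, Column $k$ of $\delta_k$ lie outside $\delta_{k-1}$ so cannot contain $(k-1)$-arrows, giving item~(2). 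Moreover, $\lambda\subset\delta_{k-2}$ forces every row and column of $\delta_{k-1}/\lambda$ to be nonempty, so no length-$0$ arrow can appear at all.

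For the characterization of $\Dm{k}\setminus\Dp{k-1}$, I would observe that $C=(\lambda,A)\in\Dm{k}$ fails to lie in $\Dp{k-1}$ precisely when one of two obstructions holds: either $\lambda\not\subset\delta_{k-2}$, i.e.\ there is some $j$ with $\lambda_j=k-j$ so the cell $(j,k-j)$ lies in $\lambda$, or $A$ contains a $k$-arrow. A filled cell $(j,k-j)$ is exactly the upper-left cell of the miniature indexed by $i=k-j$; by Definition~\ref{def:dm}(3) every $k$-arrow has length~$1$ and therefore sits in a single cell of $\delta_k\setminus\delta_{k-1}$, which is the right cell or the bottom cell of some miniature. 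Hence every obstruction is witnessed locally by at least one miniature.

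It remains to check that the local picture of such a miniature is exactly one of those on the list. Here items~(1), (2) and~(4) of Definition~\ref{def:dm} control the non-obstruction cells: item~(4) forces the upper-left cell to lie in $\lambda$ whenever an adjacent $k$-arrow is present with the appropriate orientation, and further constrains how a $k$-arrow and a $(k-1)$-arrow can coexist in the same row or column; item~(1) forbids a $k$-arrow from meeting a $(k-1)$-arrow of length at least~$1$ at a fillable corner; item~(2) prevents length-$0$ arrows on Row $k$ or Column $k$. A finite case analysis on the orientation and location of the $k$-arrow, together with the presence or absence of an adjacent length-$0$ $(k-1)$-arrow, enumerates precisely the pictures on the list, and conversely each picture on the list visibly displays either a filled upper-left cell or a $k$-arrow, placing $C$ outside $\Dp{k-1}$. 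The main obstacle is purely the bookkeeping of this case analysis: the miniature couples the three cells together with potential length-$0$ $(k-1)$-arrows in the adjacent row or column through condition~(4), and the symmetric horizontal and vertical versions must be kept carefully distinct.
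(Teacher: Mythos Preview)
Your proof is correct and follows essentially the same approach as the paper's. The paper is somewhat terser: it lists three possible obstructions to membership in $\Dp{k-1}$ (a diagonal cell $(k-i,i)\in\lambda$, a $k$-arrow, or a length-$0$ arrow), observes that the latter two each force the first, and then checks that any miniature containing such a diagonal cell must appear in Figure~\ref{fig:list}. You instead keep two obstructions ($\lambda\not\subset\delta_{k-2}$ or a $k$-arrow) and argue for each separately; this is equivalent, since a $k$-arrow of length $1$ already forces the adjacent middle cell into $\lambda$ (via condition~(4) in the boundary cases $j=1$ or $j=k$, and directly otherwise), so your two cases in fact collapse to the paper's single one. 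Your explicit verification of $\Dp{k-1}\subset\Dm{k}$ is a detail the paper leaves to the reader.
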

\begin{proof}
  Let $(\lambda,A)\in \Dm{k}\setminus\Dp{k-1}$. Then at least one of the
  following is true: (1) $\lambda$ has the cell $(k-i,i)$ for some $1\leq i\leq
  k-1$, (2) there is a $k$-arrow, or (3) there is an arrow of length $0$. Since
  each of (2) and (3) implies (1), we always have (1). By definition of $\Dm k$,
  it is straightforward to check that the miniature containing the cell
  $(k-i,i)$ must be in the list in Figure~\ref{fig:list}.
\end{proof}

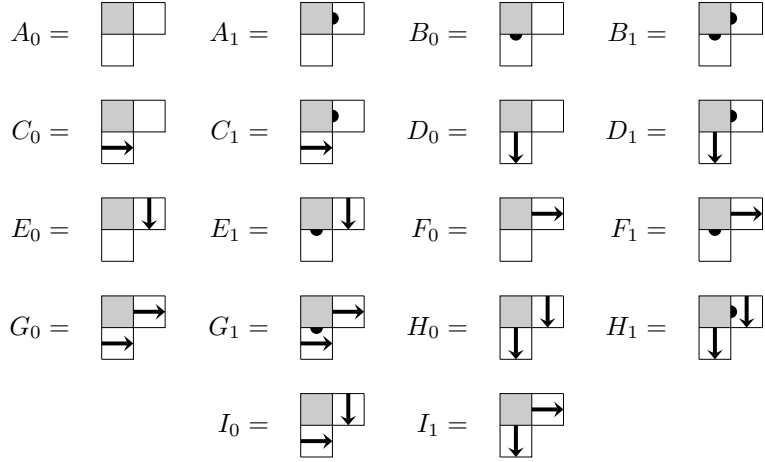
\begin{figure}
\begin{center}
\begin{pspicture}(-2,0)(4,-3)
\rput[r](-1,-1){$A_0=$}
\Gcell(1,1)[] \cell(2,1)[] \cell(1,2)[] \psline(2,0)(0,0)(0,-2)
\end{pspicture}
\begin{pspicture}(-2,0)(4,-3)
\rput[r](-1,-1){$A_1=$}
\Gcell(1,1)[] \cell(2,1)[] \cell(1,2)[] \psline(2,0)(0,0)(0,-2)
\hzero(1,2)
\end{pspicture}
\begin{pspicture}(-2,0)(4,-3)
\rput[r](-1,-1){$B_0=$}
\Gcell(1,1)[] \cell(2,1)[] \cell(1,2)[] \psline(2,0)(0,0)(0,-2)
\vzero(2,1)
\end{pspicture}
\begin{pspicture}(-2,0)(4,-3)
\rput[r](-1,-1){$B_1=$}
\Gcell(1,1)[] \cell(2,1)[] \cell(1,2)[] \psline(2,0)(0,0)(0,-2)
\hzero(1,2)
\vzero(2,1)
\end{pspicture}

\begin{pspicture}(-2,0)(4,-3)
\rput[r](-1,-1){$C_0=$}
\Gcell(1,1)[] \cell(2,1)[] \cell(1,2)[] \psline(2,0)(0,0)(0,-2)
\harrow(2,1)[1]
\end{pspicture}
\begin{pspicture}(-2,0)(4,-3)
\rput[r](-1,-1){$C_1=$}
\Gcell(1,1)[] \cell(2,1)[] \cell(1,2)[] \psline(2,0)(0,0)(0,-2)
\harrow(2,1)[1]
\hzero(1,2)
\end{pspicture}
\begin{pspicture}(-2,0)(4,-3)
\rput[r](-1,-1){$D_0=$}
\Gcell(1,1)[] \cell(2,1)[] \cell(1,2)[] \psline(2,0)(0,0)(0,-2)
\varrow(2,1)[1]
\end{pspicture}
\begin{pspicture}(-2,0)(4,-3)
\rput[r](-1,-1){$D_1=$}
\Gcell(1,1)[] \cell(2,1)[] \cell(1,2)[] \psline(2,0)(0,0)(0,-2)
\varrow(2,1)[1]
\hzero(1,2)
\end{pspicture}

\begin{pspicture}(-2,0)(4,-3)
\rput[r](-1,-1){$E_0=$}
\Gcell(1,1)[] \cell(2,1)[] \cell(1,2)[] \psline(2,0)(0,0)(0,-2)
\varrow(1,2)[1]
\end{pspicture}
\begin{pspicture}(-2,0)(4,-3)
\rput[r](-1,-1){$E_1=$}
\Gcell(1,1)[] \cell(2,1)[] \cell(1,2)[] \psline(2,0)(0,0)(0,-2)
\vzero(2,1)
\varrow(1,2)[1]
\end{pspicture}
\begin{pspicture}(-2,0)(4,-3)
\rput[r](-1,-1){$F_0=$}
\Gcell(1,1)[] \cell(2,1)[] \cell(1,2)[] \psline(2,0)(0,0)(0,-2)
\harrow(1,2)[1]
\end{pspicture}
\begin{pspicture}(-2,0)(4,-3)
\rput[r](-1,-1){$F_1=$}
\Gcell(1,1)[] \cell(2,1)[] \cell(1,2)[] \psline(2,0)(0,0)(0,-2)
\harrow(1,2)[1]
\vzero(2,1)
\end{pspicture}

\begin{pspicture}(-2,0)(4,-3)
\rput[r](-1,-1){$G_0=$}
\Gcell(1,1)[] \cell(2,1)[] \cell(1,2)[] \psline(2,0)(0,0)(0,-2)
\harrow(2,1)[1]\harrow(1,2)[1]
\end{pspicture}
\begin{pspicture}(-2,0)(4,-3)
\rput[r](-1,-1){$G_1=$}
\Gcell(1,1)[] \cell(2,1)[] \cell(1,2)[] \psline(2,0)(0,0)(0,-2)
\harrow(2,1)[1]\harrow(1,2)[1]  \vzero(2,1)
\end{pspicture}
\begin{pspicture}(-2,0)(4,-3)
\rput[r](-1,-1){$H_0=$}
\Gcell(1,1)[] \cell(2,1)[] \cell(1,2)[] \psline(2,0)(0,0)(0,-2)
\varrow(2,1)[1]\varrow(1,2)[1]
\end{pspicture}
\begin{pspicture}(-2,0)(4,-3)
\rput[r](-1,-1){$H_1=$}
\Gcell(1,1)[] \cell(2,1)[] \cell(1,2)[] \psline(2,0)(0,0)(0,-2)
\varrow(2,1)[1]\varrow(1,2)[1]  \hzero(1,2)
\end{pspicture}

\begin{pspicture}(-2,0)(4,-3)
\rput[r](-1,-1){$I_0=$}
\Gcell(1,1)[] \cell(2,1)[] \cell(1,2)[] \psline(2,0)(0,0)(0,-2)
\harrow(2,1)[1]\varrow(1,2)[1]
\end{pspicture}
\begin{pspicture}(-2,0)(4,-3)
\rput[r](-1,-1){$I_1=$}
\Gcell(1,1)[] \cell(2,1)[] \cell(1,2)[] \psline(2,0)(0,0)(0,-2)
\varrow(2,1)[1]\harrow(1,2)[1]
\end{pspicture}
\end{center}
 \caption{List of exceptions.}
  \label{fig:list}
\end{figure}

Now we define a sign-reversing involution on $\Dm k\setminus\Dp{k-1}$. For an
element $C$ in $\Dm k\setminus\Dp{k-1}$, we find the uppermost miniature of $C$
that is contained in the list in Figure~\ref{fig:list} except $I_0$ and
$I_1$. Such miniature exists unless all the miniatures of $C$ are either $I_0$
or $I_1$, and there are exactly two possibilities for this, see the
Figure~\ref{fig:fixed} when $k=5$. If this miniature is $X_0$ (resp.~$X_1$) then
we define $f(C)$ to be the \dkc. obtained from $C$ by replacing the miniature
with $X_1$ (resp.~$X_0$), where $X$ is one of $A,B,\dots,H$, see
Figure~\ref{fig:sign-reversing}. This gives:

\begin{lem}
The map $f$ is a sign-reversing involution on $\Dm k\setminus\Dp{k-1}$ with two fixed
points of weight $(-q)^{k^2}$.
\end{lem}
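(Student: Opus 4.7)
The plan is to verify four claims in turn: (a) $f$ is well-defined, mapping $\Dm{k}\setminus\Dp{k-1}$ to itself; (b) $f\circ f = \mathrm{id}$; (c) $f$ is sign-reversing; (d) there are exactly two fixed points, each of weight $(-q)^{k^2}$.

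For (a), observe that in each pair $X_0, X_1$ with $X\in\{A,\ldots,H\}$ the two miniatures differ only by the presence of a single length-$0$ $(k-1)$-arrow lying in Row $k-i$ or Column $i$ of $\delta_{k-1}$, where $i\in\{1,\ldots,k-1\}$ indexes the flipped miniature. Because this arrow covers no cells, it cannot create or destroy fillable or forbidden corners, it cannot lie in Row $k$ or Column $k$ of $\delta_k$, and it does not affect the $k$-arrow content, so conditions (1)--(3) of Definition~\ref{def:dm} are clearly preserved. Condition (4) is purely local to the miniature, so one inspects the eight pairs in Figure~\ref{fig:list}: for the types $A,B,C,D,E,F$ the relevant hypothesis of (4) is vacuous on both sides because the required combination of $k$-arrow and $(k-1)$-arrow on the same cell is absent; the interesting cases are $G_0\leftrightarrow G_1$ and $H_0\leftrightarrow H_1$, where the added length-$0$ arrow does trigger the hypothesis on one side, but the required conclusion already holds because $G$- and $H$-type miniatures carry $k$-arrows in both non-corner cells.

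Claims (b) and (c) will then follow with little effort. The added or removed length-$0$ arrow at miniature $i$ lives in Row $k-i$ or Column $i$ of $\delta_{k-1}$, disjoint from the rows and columns involved in miniatures $i\pm 1$, so the positions and types of all non-$I$ miniatures coincide in $C$ and $f(C)$; hence $f$ re-selects the same miniature to flip and $f\circ f = \mathrm{id}$. The flip changes $|A|$ by exactly $\pm 1$ and leaves $\norm{A}$ unchanged, so $\wt_q(f(C)) = -\wt_q(C)$.

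For (d), $C\in\Dm k\setminus\Dp{k-1}$ is fixed iff every miniature of $C$ is of type $I_0$ or $I_1$. Both types place the corner cell $(k-i,i)$ in $\lambda$, so all such cells lie in $\lambda$; combined with $\lambda\subset\delta_{k-1}$, this forces $\lambda=\delta_{k-1}$. Each cell of the outer antidiagonal $(j,k-j+1)$ then carries exactly one length-$1$ $k$-arrow whose orientation is dictated by the $I$-type of the containing miniature, and consistency on the cell $(k-i,i+1)$ shared by miniatures $i$ and $i+1$ forces these types to alternate. This yields exactly two fixed points, corresponding to the patterns $I_0,I_1,I_0,\ldots$ and $I_1,I_0,I_1,\ldots$. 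For each, $|\lambda|=\binom{k}{2}$, $|A|=k$, and $\norm{A}=k$, so $\wt_q = (-1)^k q^{2\binom{k}{2}+k} = (-q)^{k^2}$. The main obstacle is the case analysis of condition (4) in step (a), but this reduces to a finite inspection of the sixteen diagrams in Figure~\ref{fig:list} and no genuine difficulty arises.
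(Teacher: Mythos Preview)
Your verification is essentially correct and follows the natural route; the paper itself gives no proof of this lemma, so there is nothing to compare against beyond checking that the details work, and they do.

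There is one small inaccuracy in your argument for (b). You claim that the added or removed length-$0$ arrow in Row $k-i$ or Column $i$ is ``disjoint from the rows and columns involved in miniatures $i\pm 1$''. This is false: Row $k-i$ contains the bottom cell $(k-i,i+1)$ of miniature $i+1$, and Column $i$ contains the right cell $(k-i+1,i)$ of miniature $i-1$. The correct reason your conclusion holds is built into the definition of a miniature: for miniature $j$, any $(k-1)$-arrows in Column $j+1$ or Row $k-j+1$ are explicitly ignored. Thus a $(k-1)$-arrow in Row $k-i$ is ignored in miniature $i+1$, and a $(k-1)$-arrow in Column $i$ is ignored in miniature $i-1$. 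With this correction, the miniatures of $C$ and $f(C)$ agree at every position $j\neq i$, and at position $i$ the flipped miniature is still a non-$I$ type, so the uppermost non-$I$ miniature of $f(C)$ sits at the same position $i$ and $f(f(C))=C$ follows. Everything else---the preservation of conditions (1)--(4), the sign change, and the computation of the two fixed points---is correct as you wrote it.
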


This ends the proof of Theorem~\ref{thm:dkc} (upon completion of the proof in the next subsection). 

\begin{figure}
 \centering
\begin{pspicture}(0,0)(5,-5)
\gcell(1,1)[]\gcell(1,2)[]\gcell(1,3)[]\gcell(1,4)[]
\gcell(2,1)[]\gcell(2,2)[]\gcell(2,3)[]
\gcell(3,1)[]\gcell(3,2)[]
\gcell(4,1)[]
\psdk 5
\harrow(5,1)[1]
\varrow(4,2)[1]
\harrow(3,3)[1]
\varrow(2,4)[1]
\harrow(1,5)[1]
\end{pspicture} \qquad \qquad
\begin{pspicture}(0,0)(5,-5)
\gcell(1,1)[]\gcell(1,2)[]\gcell(1,3)[]\gcell(1,4)[]
\gcell(2,1)[]\gcell(2,2)[]\gcell(2,3)[]
\gcell(3,1)[]\gcell(3,2)[]
\gcell(4,1)[]
\psdk 5
\varrow(5,1)[1]
\harrow(4,2)[1]
\varrow(3,3)[1]
\harrow(2,4)[1]
\varrow(1,5)[1]
\end{pspicture}
\caption{The two fixed points in $\Dp5$ which will not be sent to $\Dp4$.}
  \label{fig:fixed}
\end{figure}
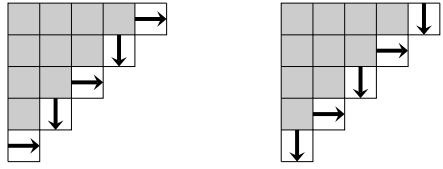

 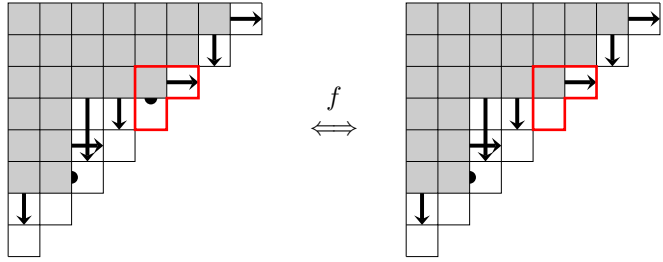
\begin{figure}
   \centering
\begin{pspicture}(0,0)(8,-8)
\gcell(1,1)[]  \gcell(1,2)[] \gcell(1,3)[] \gcell(1,4)[] \gcell(1,5)[]\gcell(1,6)[]\gcell(1,7)[]
\gcell(2,1)[]  \gcell(2,2)[] \gcell(2,3)[] \gcell(2,4)[] \gcell(2,5)[]\gcell(2,6)[]
\gcell(3,1)[]  \gcell(3,2)[]   \gcell(3,3)[]   \gcell(3,4)[]   \gcell(3,5)[]
\gcell(4,1)[]  \gcell(4,2)[]
\gcell(5,1)[]  \gcell(5,2)[]
\gcell(6,1)[]  \gcell(6,2)[]
\psdk8
\harrow(1,8)[1] \varrow(2,7)[1] \harrow(3,6)[1] \vzero(4,5)
\varrow(4,4)[1] \varrow(4,3)[2] \harrow(5,3)[1] \hzero(6,3)
\varrow(7,1)[1]
\pspolygon[linewidth=1pt, linecolor=red](4,-2)(4,-4)(5,-4)(5,-3)(6,-3)(6,-2)
\end{pspicture}
\begin{pspicture}(0,0)(4,-8)
\rput(2,-3){$f$}
\rput(2,-4){$\Longleftrightarrow$}
\end{pspicture}
\begin{pspicture}(0,0)(8,-8)
\gcell(1,1)[]  \gcell(1,2)[] \gcell(1,3)[] \gcell(1,4)[] \gcell(1,5)[]\gcell(1,6)[]\gcell(1,7)[]
\gcell(2,1)[]  \gcell(2,2)[] \gcell(2,3)[] \gcell(2,4)[] \gcell(2,5)[]\gcell(2,6)[]
\gcell(3,1)[]  \gcell(3,2)[]   \gcell(3,3)[]   \gcell(3,4)[]   \gcell(3,5)[]
\gcell(4,1)[]  \gcell(4,2)[]
\gcell(5,1)[]  \gcell(5,2)[]
\gcell(6,1)[]  \gcell(6,2)[]
\psdk8
\harrow(1,8)[1] \varrow(2,7)[1] \harrow(3,6)[1]
\varrow(4,4)[1] \varrow(4,3)[2] \harrow(5,3)[1] \hzero(6,3)
\varrow(7,1)[1]
\pspolygon[linewidth=1pt,linecolor=red](4,-2)(4,-4)(5,-4)(5,-3)(6,-3)(6,-2)
\end{pspicture}
\caption{The sign-reversing involution $f$ on $\Dm k\setminus\Dp{k-1}$. The
  uppermost miniature which is in Figure~\ref{fig:list} except $I_0$ and $I_1$
  is colored red.}
   \label{fig:sign-reversing}
 \end{figure}

\subsection{Proof of Proposition~\ref{prop:wt-bij}}

In order to prove Proposition~\ref{prop:wt-bij} we need several lemmas.

\begin{lem}\label{lem:no ascendible}
  Let $(\lambda,A)\in\Dp k$. Then we cannot make an ascendible arrow by
  repeating the operations filling and shrinking on
  $\Ascend(\lambda,A)$.
\end{lem}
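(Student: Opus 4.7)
I plan to prove the lemma by induction on the number of fill and shrink operations applied to $\Ascend(\lambda, A)$. The base case is immediate: by the definition of $\Ascend$, it stops precisely when no $k$-arrow is ascendible.

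For the inductive step, I use two structural observations. First, arrows persist under the operations: a fill only decreases the lengths of the two arrows at the filled corner by $1$, while a shrink converts a $k$-arrow into a $(k-1)$-arrow whose length is $2$ smaller, but in both cases the arrows themselves remain in the configuration. Second, the partition $\lambda$ grows monotonically, with a single cell added at the corner being operated on. Together these imply that, for any $k$-arrow $w$ in row $i$ (or column $i$) with $i\geq 2$, the witnesses of non-ascendibility, namely $\lambda_{i-1}>\lambda_i$ and the existence of an arrow in row $i-1$, are stable under operations that do not affect row $i$ (or column $i$).

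The subtle case is a fill at an outer corner $(i_0, j_0)$ in which the horizontal arrow $u_h$ is a $k$-arrow (forcing $u_v$ to be a $(k-1)$-arrow); after the fill, $\lambda_{i_0}$ has increased to $j_0$, so a new equality $\lambda_{i_0 - 1} = \lambda_{i_0} = j_0$ could arise. If the inductive hypothesis provided a non-ascendibility witness for $u_h$ that is not this strict inequality, it persists by the observations above. In the remaining case I show that the alleged configuration at $S_t$, namely $u_h$ a $k$-arrow in row $i_0$ starting at $(i_0, j_0)$, $u_v$ a $(k-1)$-arrow in column $j_0$ starting at $(i_0, j_0)$, $\lambda_{i_0 - 1}^{(t)} = j_0$, and no arrow in row $i_0 - 1$, cannot be reached from $\Ascend(\lambda, A)$. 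The argument traces the addition of the cell $(i_0 - 1, j_0)$ to $\lambda$ and the creation of $u_v$: since no horizontal arrow exists in row $i_0 - 1$ at any point (by persistence), that cell must have been added by a shrink of a vertical arrow at $(i_0-1, j_0)$; by uniqueness of the vertical arrow in column $j_0$, this must be $u_v$ itself, which then forces $u_v$ to start at $(i_0, j_0)$ only after such a shrink. Comparing this with the required presence of $u_v$ at $(i_0, j_0)$ when the fills in row $i_0$ shift $u_h$ to its current starting column, together with the non-ascendibility of $u_v$ at $\Ascend(\lambda, A)$, yields a contradiction. The analogous argument for vertical $k$-arrows follows by transposing the roles of rows and columns.

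The main obstacle is the last step: the careful bookkeeping of the history of operations, where one must exploit uniqueness of the horizontal (resp.\ vertical) arrow starting from any outer corner, together with persistence and monotonicity, to force the contradiction in the sole problematic subcase.
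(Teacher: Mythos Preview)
Your overall framework (induction on the number of operations, persistence of arrows, monotonicity of $\lambda$) matches the paper's minimal-counterexample argument, and you correctly isolate the only nontrivial case: the operation creating the ascendible horizontal $k$-arrow $u$ in row $i$ must be a fill at $(i,j_0)$ involving a vertical $(k-1)$-arrow $v$.

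However, your treatment of this subtle case has a genuine gap. You assert that the cell $(i-1,j_0)$ ``must have been added by a shrink of a vertical arrow,'' but this overlooks the possibility that $(i-1,j_0)$ already lies in the partition at time $0$ (recall that $\Ascend$ does not change $\lambda$). In that situation there is no ``addition'' to trace, and your argument stalls. Moreover, the closing sentence (``Comparing this with the required presence of $u_v$ \ldots\ together with the non-ascendibility of $u_v$ at $\Ascend(\lambda,A)$, yields a contradiction'') is not an argument: you never say which two facts are actually incompatible, and the non-ascendibility of the \emph{vertical} arrow $v$ at time $0$ is irrelevant to the ascendibility of the \emph{horizontal} arrow $u$.

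The paper's proof avoids both issues by tracing not the cell $(i-1,j_0)$ but the arrow $v$ itself: since $v$ is a $(k-1)$-arrow and only shrinking (among the allowed operations) converts a $k$-arrow into a $(k-1)$-arrow, there is an earlier step $s$ at which $v$ was shrunk. Immediately before that shrink, $v$ is a vertical $k$-arrow at an outer corner with no other arrow there, and $u$ sits in the row just below; one checks that $u$ is then ascendible, contradicting the minimality of $r$. This bypasses any question about when $(i-1,j_0)$ entered $\lambda$ and gives the contradiction in one stroke. I recommend you rework your subtle case along these lines rather than attempting to track the cell.
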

\begin{proof}
  Assume that we can make an ascendible arrow, say $u$, after $r$ operations,
  where $r$ is the smallest such integer. By symmetry we can assume that $u$ is
  horizontal. Since we can only add cells to $\lambda$, the last operation must
  add the cell $c$ to which the arrow $u$ is attached as shown below:
  \begin{center}
\begin{pspicture}(0,0)(4,-2)
\gcell(1,1)[] \gcell(2,1)[] \harrow(2,2)[2] \psgrid(0,0)(1,-2)
\rput(3.5,-1.5){$u$}
\rput(.5,-1.5){$c$}
  \end{pspicture}.
 \end{center}
Since $u$ is ascendible, it is a $k$-arrow. Thus $u$ cannot be shrinked, which
implies $c$ is added by filling operation. Before filling $c$, we have the
following situation:
  \begin{center}
\begin{pspicture}(0,0)(4,-3)
\gcell(1,1)[] \harrow(2,1)[3] \psgrid(0,0)(1,-1)
\psline(0,-1)(0,-2)
\varrow(2,1)[2]
\rput(3.5,-1.5){$u$}
\rput(1.5,-2.5){$v$}
  \end{pspicture}.
 \end{center}
 Since $c$ is a fillable cell, $v$ is a $(k-1)$-arrow. Since $v$ cannot be
 obtained by ascending, it has become a $(k-1)$-arrow by shrinking. Thus before
 shrinking $v$, we have the following situation:
  \begin{center}
\begin{pspicture}(0,0)(4,-3)
\psline(1,0)(0,0)(0,-2) \harrow(2,1)[3]
\varrow(1,1)[3]
\rput(3.5,-1.5){$u$}
\rput(1.5,-2.5){$v$}
  \end{pspicture}.
 \end{center}
Thus we can make $u$ an ascendible arrow using $r-2$ operations which is a
contradiction to the minimal condition on $r$.
\end{proof}

If a \dkc. has the following then it is called \emph{invalid}:
  \begin{center}
 \begin{pspicture}(0,0)(3,-2)
\gcell(1,1)[] \gcell(2,1)[] \harrow(2,2)[2] \harrow(1,2)[2]
\psgrid(0,0)(1,-2)
  \end{pspicture}.
\end{center}

\begin{lem}\label{lem:valid}
  Let $(\lambda,A)\in\Dp k$. Then we cannot make an invalid \dkc. by repeating
  the operations filling, removing, ascending, and shrinking on
  $\Ascend(\lambda,A)$.
\end{lem}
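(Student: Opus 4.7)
My plan is to imitate the argument of Lemma~\ref{lem:no ascendible}. Assume for contradiction that $r$ is the minimum number of operations from $\{\Fill,\Remove,\Ascend,\Shrink\}$ that transforms $\Ascend(\lambda,A)$ into an invalid \dkc., and examine the last of these operations. By the row/column symmetry we may assume the invalid pattern consists of two horizontal $k$-arrows $u_1$ in Row $i-1$ and $u_2$ in Row $i$ with $\lambda_{i-1}=\lambda_i$.

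First, I would rule out the case where the last step is ascending. Since ascendibility of a $k$-arrow into Row $j$ requires Row $j$ to be empty beforehand, the ascended arrow must land in either Row $i-1$ or Row $i$ while the complementary row already contains its mate. But then, immediately before the ascent, one of $u_1, u_2$ was already a horizontal $k$-arrow whose neighbouring row was empty and whose widths coincided, hence itself ascendible; performing that ascent instead would reach the same invalid configuration in $r-1$ operations, contradicting the minimality of $r$.

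For the filling and shrinking cases, both operations add a cell to $\lambda$ at an outer corner. If the added cell lies outside Rows $i-1$ and $i$, then the invalid pattern existed already one step earlier, contradicting minimality. Otherwise the added cell is precisely the one that causes $\lambda_{i-1}$ and $\lambda_i$ to become equal, and the same chain of reasoning as in the proof of Lemma~\ref{lem:no ascendible} (tracking how the vertical arrow sharing that outer corner must have been created) shows that $u_2$ would have been ascendible just before the last operation, so the ascent could have been carried out earlier to reach the same invalid configuration in fewer than $r$ steps. The case of removing is dual: removing an inner corner strictly decreases some $\lambda_j$, so a pre-state with $\lambda_{i-1}\neq\lambda_i$ cannot acquire $\lambda_{i-1}=\lambda_i$ after the step, whereas a pre-state with $\lambda_{i-1}=\lambda_i$ was already invalid.

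The main obstacle will be the ascending case, because unlike filling and shrinking it is not covered directly by Lemma~\ref{lem:no ascendible}: one has to check carefully that moving a single $k$-arrow one row up (or one column to the left) cannot, as a side effect, produce an ascendible arrow elsewhere that our minimality argument fails to catch. This requires a local inspection of how the ascending operation interacts with its row and column neighbours, combined with the fact that we started from $\Ascend(\lambda,A)$, so no ascendible arrow was available initially and the ``no ascendible arrow'' invariant persists through each of the allowed operations.
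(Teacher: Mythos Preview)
Your proposal has a basic misreading of the definition of ``invalid.'' In the picture defining an invalid configuration, the two horizontal arrows are drawn with the \emph{same} length while $\lambda_{i-1}=\lambda_i$. Since a horizontal arrow in Row~$j$ must occupy either the whole Row~$j$ of $\delta_k/\lambda$ or the whole Row~$j$ of $\delta_{k-1}/\lambda$, equal lengths force the upper arrow $u$ to be a $(k-1)$-arrow and the lower arrow $v$ to be a $k$-arrow. The paper's proof begins precisely with this observation (``Then $u$ is a $(k-1)$-arrow and $v$ is a $k$-arrow''), and the whole argument hinges on it: one then asks how $u$, which began life in $\Dp k$ as a $k$-arrow, became a $(k-1)$-arrow. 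Ascending is ruled out, so $u$ must have been shrunk, and after that shrink the cell below the newly created cell~$c$ can never be added --- which blocks the invalid pattern. Your analysis, by contrast, is built around two $k$-arrows $u_1,u_2$, so it is aimed at the wrong target from the outset.

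Even setting aside the definitional issue, the sketch has real gaps. In the removing case you claim that a pre-state with $\lambda_{i-1}\neq\lambda_i$ cannot acquire $\lambda_{i-1}=\lambda_i$ after removing; this is false, since removing the inner corner $(i-1,\lambda_{i-1})$ decreases $\lambda_{i-1}$ by one and can turn $\lambda_{i-1}=\lambda_i+1$ into $\lambda_{i-1}=\lambda_i$. You also invoke a ``no ascendible arrow'' invariant through all four operations, but Lemma~\ref{lem:no ascendible} only establishes this for filling and shrinking; you would need to extend it to removing and further ascending, which you do not do. Finally, in your ascending case you argue that performing an ascent one step earlier yields ``the same invalid configuration,'' but ascending changes a $k$-arrow into a $(k-1)$-arrow, so this is not the same configuration. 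The paper's route --- tracking the type change of $u$ rather than case-splitting on the last operation --- avoids all of these difficulties and is much shorter.
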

\begin{proof}
  Assume that we have the following:
  \begin{center}
 \begin{pspicture}(0,0)(4,-2)
\gcell(1,1)[] \gcell(2,1)[] \harrow(2,2)[2]
\harrow(1,2)[2] \psgrid(0,0)(1,-2)
\rput(3.5,-1.5){$v$}
\rput(3.5,-.5){$u$}
 \end{pspicture}.
\end{center}
Then $u$ is a $(k-1)$-arrow and $v$ is a $k$-arrow. Since $u$ cannot become a
$(k-1)$-arrow by ascending, it has been shrinked. However, after shrinking $u$,
we have the following situation
  \begin{center}
 \begin{pspicture}(0,0)(3,-1)
\gcell(1,1)[] \harrow(1,2)[2] \psgrid(0,0)(1,-1)
\rput(3.5,-.5){$u$} \rput(.5,-.5){$c$}
 \end{pspicture}
\end{center}
where the cell below $c$ cannot be added. Thus the \dkc. cannot be invalid.
\end{proof}

\begin{lem}\label{lem:psi}
  The map $\psi$ is a function from $\Dp k$ to $\Dm k$.
\end{lem}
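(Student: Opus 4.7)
The plan is to verify that $\psi(C) \in \Dm{k}$ for each $C \in \Dp{k}$ by checking the four defining properties of \dkmc.s in turn. First I would confirm that each of the four operations in the composition $\Fill \circ \Shrink \circ \Fill \circ \Ascend$ terminates: $\Ascend$ strictly moves each arrow upward or leftward and so halts when no arrow can be moved; $\Fill$ strictly enlarges $\lambda$, which stays inside $\delta_{k-1}$; and $\Shrink$ also strictly enlarges $\lambda$ while decreasing an arrow length by $2$. Second, I would check that the underlying \dkc. property (no forbidden corner) is preserved throughout the four operations; this is the role of Lemma~\ref{lem:valid}, since a forbidden corner is a special case of the local ``invalid'' pattern ruled out there. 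A preliminary observation to record is that $\Ascend$ generically converts some $k$-arrows into $(k-1)$-arrows, which is what makes the subsequent $\Fill$ nontrivial.

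Next, I would verify properties (1)--(4) of Definition~\ref{def:dm} for the output. Property (1), no fillable corner, holds by the definition of the final $\Fill$, and the absence of forbidden corners follows from the preceding step. Property (3), every $k$-arrow has length $1$, should follow from $\Shrink$ together with the final $\Fill$: a $k$-arrow surviving $\Shrink$ with length $\geq 2$ must start from an outer corner occupied by a second arrow, and filling that corner reduces both lengths. The slightly delicate point is that repeated fillings and the new shrinkable arrows they may expose are all handled within $\Shrink$ and the last $\Fill$; one uses Lemma~\ref{lem:no ascendible} to see that no ascendible arrow is produced along the way. Property (2), no length-$0$ arrow in Row $k$ or Column $k$, I would verify by inspecting how arrows in those positions evolve, noting that these rows and columns consist of a single cell of $\dk$, so the only admissible $k$-arrow there has length $1$ and is not shrinkable.

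The main obstacle will be property (4), the miniature condition. It requires a careful local analysis of the effect of $\Fill \circ \Shrink \circ \Fill \circ \Ascend$ near each cell $(k-i,i)$, distinguishing whether adjacent arrows are $k$-arrows or $(k-1)$-arrows after each of the four stages. I would split into cases according to which arrows are present just before the last $\Fill$, and then invoke Lemma~\ref{lem:no ascendible} and Lemma~\ref{lem:valid} to rule out the configurations that would violate the miniature condition. In particular, whenever the relevant miniature contains a horizontal (resp.\ vertical) $k$-arrow in the bottom (resp.\ right) cell, I would argue that either ascending has already forced the middle cell into $\lambda$, or else the intermediate $\Fill$ has added it to $\lambda$. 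Once this local case analysis is complete, the lemma follows.
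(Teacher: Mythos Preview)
Your overall plan—verify each condition of Definition~\ref{def:dm} using Lemmas~\ref{lem:no ascendible} and~\ref{lem:valid} for the local obstructions—is exactly the paper's approach. Two points in your sketch need correction, though.

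First, a forbidden corner is \emph{not} a special case of the ``invalid'' pattern of Lemma~\ref{lem:valid}. A forbidden corner is an outer corner carrying one horizontal and one vertical $k$-arrow, whereas the invalid pattern consists of two \emph{parallel} horizontal arrows in adjacent rows. These are unrelated local pictures, and Lemma~\ref{lem:valid} says nothing about forbidden corners. The paper instead observes directly that none of $\Ascend$, $\Fill$, $\Shrink$ can create a forbidden corner; you will need to check this separately rather than citing Lemma~\ref{lem:valid}.

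Second, in your argument for property~(3) you assert that a $k$-arrow of length $\geq 2$ surviving $\Shrink$ must start from an outer corner occupied by a second arrow. This overlooks the possibility that the arrow does not start from an outer corner at all: one may have $\lambda_{i-1}=\lambda_i$ together with another arrow already in row $i-1$, so that the arrow in row $i$ is neither ascendible nor shrinkable, yet no $\Fill$ applies to it. The paper handles this by choosing $u$ to be the \emph{uppermost} offending arrow and splitting into three cases on the arrow (if any) in the row above, invoking Lemma~\ref{lem:no ascendible} for the empty case and Lemma~\ref{lem:valid} for the equal-length case. The same device—an extremal choice followed by both lemmas—is used again in verifying the miniature condition~(4). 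Once you incorporate this case analysis, your argument coincides with the paper's.
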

\begin{proof}
  Let $(\lambda,A)\in\Dp k$. We will show that $\psi (\lambda,A)$ satisfies the
  three conditions (1), (2), and (3) in Definition~\ref{def:dm}.

  Condition (1) follows from the fact that filling, ascending, and shrinking
  cannot produce a forbidden corner and that $\psi$ ends with $\Fill$, which
  leaves no fillable corner.

  To prove Condition (2), assume that an arrow $u$ in $(\lambda,A)$ remains a
  $k$-arrow of length at least $2$ in $\psi(\lambda,A)$. We can assume that $u$
  is horizontal and the uppermost such arrow. Now consider the arrow $u$ in
  $\Fill\circ\Ascend(\lambda,A)$. If $u$ starts from a non-fillable outer conner
  as shown below,
\begin{center}
  \begin{pspicture}(0,0)(4.5,-1) \psline(1,0)(0,0)(0,-1) \harrow(1,1)[4]
    \rput(4.5,-.5){$u$}
  \end{pspicture}
\end{center}
then $u$ will be a $(k-1)$-arrow by shrinking. Otherwise, we have one of the
following situations:
 \begin{center}
\begin{pspicture}(0,0)(4,-2)
\psline(0,0)(0,-2) \harrow(2,1)[3]
\rput(3.5,-1.5){$u$}
\end{pspicture}, \qquad\quad
\begin{pspicture}(0,0)(4,-2)
\psline(0,0)(0,-2) \harrow(2,1)[3]
\harrow(1,1)[3]
\rput(3.5,-1.5){$u$}
\rput(3.5,-.5){$v$}
 \end{pspicture}, \qquad\quad
\begin{pspicture}(0,0)(5,-2)
\psline(0,0)(0,-2) \harrow(2,1)[3]
\harrow(1,1)[4]
\rput(3.5,-1.5){$u$}
\rput(4.5,-.5){$v$}
 \end{pspicture}.
\end{center}
In the left situation above $u$ is ascendible, which is a contradiction to
Lemma~\ref{lem:no ascendible}. In the middle situation, we have an invalid
\dkc., which is a contradiction to Lemma~\ref{lem:valid}. In the right
situation, $v$ is a $k$-arrow. Since $u$ is the uppermost $k$-arrow of length at
least $2$ in $\psi(\lambda,A)$, $v$ must become a $(k-1)$-arrow. This is only
possible by shrinking. However, after shrinking $v$, we can also shrink
$u$. Thus $u$ will become a $(k-1)$-arrow, which is a contradiction.  Thus we
cannot have such $u$ in all cases and Condition (2) is true.

It remains to prove Condition (3) is held. By symmetry it is sufficient to prove
the following:
   \begin{center}
   \begin{pspicture}(0,0)(2,-2)
       \cell(1,1)[$?$] \cell(1,2)[$?$] \cell(2,1)[] \psline(2,0)(0,0)(0,-2) \harrow(2,1)[1]
    \end{pspicture}
   \begin{pspicture}(-2,0)(2,-2)
\rput(-1,-1){$\Rightarrow$}
     \Gcell(1,1)[] \cell(1,2)[$?$] \cell(2,1)[]  \psline(2,0)(0,0)(0,-2) \harrow(2,1)[1]
    \end{pspicture}
   \begin{pspicture}(-4,0)(2,-2)
\rput(-2,-1){and}
       \Gcell(1,1)[] \cell(1,2)[$?$] \cell(2,1)[] \psline(2,0)(0,0)(0,-2)
      \harrow(2,1)[1] \vzero(2,1)
   \end{pspicture}
   \begin{pspicture}(-2,0)(2,-2)
\rput(-1,-1){$\Rightarrow$}
      \Gcell(1,1)[] \cell(1,2)[] \cell(2,1)[]  \psline(2,0)(0,0)(0,-2)
      \harrow(2,1)[1] \vzero(2,1) \harrow(1,2)[1]
    \end{pspicture}
  \end{center}

If a horizontal $k$-arrow $u$, which is necessarily of length $1$, does not
start from an outer corner, we have one of the following situations:
   \begin{center}
  \begin{pspicture}(0,0)(2,-2)
    \cell(1,1)[] \cell(1,2)[] \cell(2,1)[]  \psline(2,0)(0,0)(0,-2) \harrow(2,1)[1]
\rput(1.5,-1.5){$u$}
    \end{pspicture}
   \begin{pspicture}(-4,0)(2,-2)
\rput(-2,-1){or}
     \cell(1,1)[] \cell(1,2)[] \cell(2,1)[]  \psline(2,0)(0,0)(0,-2)
     \harrow(2,1)[1]     \harrow(1,1)[1]
\rput(1.5,-1.5){$u$} \rput(1.5,-.5){$v$}
  \end{pspicture}
   \begin{pspicture}(-4,0)(2,-2)
\rput(-2,-1){or}
     \cell(1,1)[] \cell(1,2)[] \cell(2,1)[]  \psline(2,0)(0,0)(0,-2)
     \harrow(2,1)[1] \harrow(1,1)[2]
\rput(1.5,-1.5){$u$} \rput(2.5,-.5){$v$}
  \end{pspicture}
 \end{center}
In the left situation above $u$ is ascendible, which is a contradiction to
Lemma~\ref{lem:no ascendible}. The middle situation contradicts to
Lemma~\ref{lem:valid}. The right situation contradicts to Condition (2).

Now assume that a horizontal $k$-arrow $u$ starts from an outer corner with
another arrow, say $v$, which is a $(k-1)$-arrow and of length $0$. If Condition
(3) is not true, we have one of the following situations:
   \begin{center}
  \begin{pspicture}(0,0)(2,-2)
      \Gcell(1,1)[] \cell(1,2)[] \cell(2,1)[]  \psline(2,0)(0,0)(0,-2)
      \harrow(2,1)[1] \vzero(2,1) \rput(.5,-.5){$c$}
   \end{pspicture}
   \begin{pspicture}(-4,0)(2,-2)
\rput(-2,-1){or}
      \Gcell(1,1)[] \cell(1,2)[] \cell(2,1)[]  \psline(2,0)(0,0)(0,-2)
      \harrow(2,1)[1] \vzero(2,1) \hzero(1,2) \rput(.5,-.5){$c$}
 \end{pspicture}.
\end{center}
Let $c$ be the cell above $u$. In the left situation above, $c$ is constructed by
shrinking $v$. If we omit this shrinking operation, then we get the following:
   \begin{center}
   \begin{pspicture}(0,0)(2,-2)
    \cell(1,1)[] \cell(1,2)[] \cell(2,1)[]  \psline(2,0)(0,0)(0,-2)
     \harrow(2,1)[1] \varrow(1,1)[2]
  \end{pspicture}.
\end{center}
This means that we can make an ascendible arrow by filling and shrinking from
$\Ascend(\lambda,A)$. This is a contradiction to Lemma~\ref{lem:no ascendible}.
Similarly, if we have the right situation above, then $c$ is constructed by
filling operation. Thus before filling $c$, we have
   \begin{center}
   \begin{pspicture}(0,0)(2,-2)
    \cell(1,1)[] \cell(1,2)[] \cell(2,1)[]  \psline(2,0)(0,0)(0,-2)
     \harrow(2,1)[1] \varrow(1,1)[1] \harrow(1,1)[1]
  \end{pspicture}.
\end{center}
This is a contradiction to Lemma~\ref{lem:valid}. Thus Condition (3) is true.
\end{proof}

\begin{lem}\label{lem:phi}
  The map $\phi$ is a function from $\Dm k$ to $\Dp k$.
\end{lem}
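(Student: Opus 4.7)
The plan is to mirror the proof of Lemma~\ref{lem:psi}, systematically swapping the roles of ascending and descending, shrinking and stretching, and filling and removing. Recall that $\phi = \Descend \circ \Remove \circ \Stretch \circ \Remove$, and that to land in $\Dp k$ we must verify two things: the absence of any forbidden corner, and the absence of any $(k-1)$-arrow.

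First I would establish dual analogs of Lemmas~\ref{lem:no ascendible} and \ref{lem:valid}: starting from $\Remove(\lambda,A)$ for $(\lambda,A) \in \Dm k$, one cannot produce a descendible arrow, nor the corresponding dual invalid local pattern obtained from the invalid picture of Lemma~\ref{lem:valid} by reversing the roles of inner and outer corners, by subsequent stretching, removing, or descending operations. The proof is by minimal-counterexample, just as for Lemma~\ref{lem:no ascendible}: if $u$ becomes descendible after $r$ operations, $r$ minimal, the $r$-th operation must have altered a cell adjacent to $u$; case analysis on whether that operation was $\Remove$ or $\Stretch$ exhibits, in each subcase, a prior configuration at which $u$ could have been made descendible using $r - 2$ operations, contradicting minimality.

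Given these dual lemmas, the two defining conditions of $\Dp k$ follow. For the absence of $(k-1)$-arrows: any $(k-1)$-arrow surviving $\Remove \circ \Stretch \circ \Remove$ must meet the positional criteria for descendibility, using the miniature conditions of Definition~\ref{def:dm} together with the dual validity lemma to rule out the obstructions; hence the final $\Descend$ converts every such arrow into a $k$-arrow. For the absence of forbidden corners: $\Remove$ and $\Stretch$ only delete inner corners and extend existing arrows, so they cannot create an outer corner occupied by two $k$-arrows; the miniature conditions of Definition~\ref{def:dm} prevent any bad initial configurations that could be activated by these operations; and the descendibility rule explicitly forbids moving an arrow into a row or column that already holds one, which prevents $\Descend$ from producing a new forbidden corner.

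The principal obstacle will be the case analysis for the dual non-descendibility lemma. Unlike the proof of Lemma~\ref{lem:psi}, here one must track how $\Stretch$, which converts a $(k-1)$-arrow into a $k$-arrow by deleting an inner corner, interacts with neighbouring $(k-1)$-arrows, and verify that this deletion cannot silently create a descendible neighbour. A secondary subtlety is that the miniature conditions in Definition~\ref{def:dm} are exactly what is needed at the start of $\phi$ to guarantee that $\Remove$ is applicable and that the subsequent operations have sufficient structure to terminate correctly; one must check that each operation preserves exactly the local structure that the next operation consumes. Once these cases are enumerated, the arguments should close routinely along the lines established for $\psi$.
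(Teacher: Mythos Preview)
Your plan is sound, but the paper takes a considerably more economical route. Rather than proving dual analogs of Lemmas~\ref{lem:no ascendible} and~\ref{lem:valid}, the paper argues directly. For forbidden corners it observes in one line that none of the operations remove, stretch, descend can produce one. For the absence of $(k-1)$-arrows it uses an extremal argument: assume some $(k-1)$-arrow $u$ survives in $\phi(\lambda,A)$, take $u$ horizontal and \emph{lowest} among such arrows, and examine $u$ already in $\Remove(\lambda,A)$. Either $u$ starts from an inner corner (and will be stretched to a $k$-arrow), or one of three local pictures occurs: in the first $u$ is descendible; the second is the ``invalid'' pattern and is ruled out by appeal to Lemma~\ref{lem:valid}; in the third the arrow $v$ just below $u$ must, by minimality of $u$, eventually become a $k$-arrow via stretch or descend, and this forces $u$ to do the same. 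Your systematic dual-lemma approach would certainly work and has the merit of making the $\psi/\phi$ symmetry fully explicit, whereas the paper's extremal trick buys brevity at the cost of that parallelism (and its invocation of Lemma~\ref{lem:valid} in the $\Dm k$ setting is left somewhat implicit, so your route is arguably more self-contained).
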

\begin{proof}
  Let $(\lambda,A)\in \Dm k$. We have to show that there is neither forbidden
  cell nor $(k-1)$-arrow in $\phi(\lambda,A)$. Since filling, removing,
  stretching, and descending cannot produce a forbidden corner, there is no
  forbidden corner in $\phi(\lambda,A)$. Suppose that there is a $(k-1)$-arrow
  $u$ in $(\lambda,A)$ which remains a $(k-1)$-arrow in $\phi(\lambda,A)$. By
  symmetry we can assume that $u$ is horizontal and it is the lowest such
  arrow. We now consider $\Remove(\lambda,A)$. If $u$ starts from an inner
  corner of $\Remove(\lambda,A)$, then it will be stretched and become a
  $k$-arrow.  Otherwise, we have one of the following situations:
 \begin{center}
\begin{pspicture}(0,0)(5,-2)
\harrow(1,2)[3]
\rput(4.5,-.5){$u$}
\gcell(1,1)[] \gcell(2,1)[] \psgrid(0,0)(1,-2)
\end{pspicture}, \qquad\quad
\begin{pspicture}(0,0)(5,-2)
\harrow(1,2)[3]
\rput(4.5,-.5){$u$}
\harrow(2,2)[3]
\rput(4.5,-1.5){$v$}
\gcell(1,1)[] \gcell(2,1)[] \psgrid(0,0)(1,-2)
\end{pspicture}, \qquad\quad
\begin{pspicture}(0,0)(5,-2)
\harrow(1,2)[3]
\rput(4.5,-.5){$u$}
\harrow(2,1)[3]
\rput(3.5,-1.5){$v$}
\gcell(1,1)[] \gcell(2,1)[] \psgrid(0,0)(1,-2)
\end{pspicture}. 
\end{center}
In the left situation, $u$ will be descended. The middle situation is impossible
by Lemma~\ref{lem:valid}. In the right situation, because of the minimality of
$u$, we know that $v$ becomes a $(k-1)$-arrow. Thus $v$ will be either stretched
or descended. It is easy to see that $u$ must be also stretched or
descended. Thus there is no $(k-1)$-arrow in $\phi(\lambda,A)$.
\end{proof}

By Lemmas~\ref{lem:psi} and \ref{lem:phi} and the fact that $\phi$ is the
inverse operation of $\psi$, we obtain Proposition~\ref{prop:wt-bij}.

\section{\texorpdfstring{Limiting case $k\to\infty$ and connection with the
    triple product identity}{ Limiting case k→∞ and connection with the triple
    product identity}}
\label{sec:limit}

For $C=(\lambda,A)\in \Dp{k}$, if there is an arrow coming from an (nonempty)
row or column of $\lambda$, one can easily see that $q^k$ divides $\wt_q(C)$.
In other words, if $\wt_q(C)$ is not divisible by $q^k$, then the partition, the
horizontal arrows and the vertical arrows are completely separated. Thus we can
freely choose a partition, vertical arrows of distinct lengths and horizontal
arrows of distinct lengths. This argument gives us the following.
\begin{prop}\label{thm:separate}
For any nonnegative integer $k$, we have
\[ \sum_{C\in\Dp{k}}\wt_q(C) \equiv \prod_{i\geq1} \frac{1}{1-q^{2i}}
\prod_{i\geq1} ({1-q^{i}})\prod_{i\geq1} ({1-q^{i}}) 
=\prod_{i\geq1} \frac{1-q^{i} }{1+q^{i}} \mod q^k.\]
\end{prop}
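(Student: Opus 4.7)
The plan is to follow the outline suggested by the statement itself: split the sum according to whether arrows interact with the partition, show that the ``interacting'' contributions vanish modulo $q^k$, then factorize the remaining ``separated'' sum.

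First, I will verify the divisibility claim: if $C=(\lambda,A)\in\Dp{k}$ has a horizontal arrow in some row $i$ with $\lambda_i\geq1$, then $q^{k+1}$ divides $\wt_q(C)$ (the vertical case is symmetric). The partition condition forces $\lambda_j\geq\lambda_i\geq1$ for all $j\leq i$, so $|\lambda|\geq i\lambda_i$. The arrow has length $k-i+1-\lambda_i\geq 1$ since $\lambda\subset\delta_{k-1}$. Hence
\[
2|\lambda|+\norm{A}\geq 2i\lambda_i+(k-i+1-\lambda_i)=(2i-1)\lambda_i+k-i+1\geq k+i\geq k+1.
\]
Consequently, modulo $q^k$ only ``separated'' configurations contribute, meaning every horizontal arrow sits in a row with $\lambda_i=0$ and every vertical arrow in a column with $\lambda^{tr}_j=0$.

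Next, I will show that in a separated configuration the choices of $\lambda$, horizontal arrows, and vertical arrows are essentially independent modulo $q^k$. Writing $\ell=\ell(\lambda)$ and $c=\lambda_1$, the admissible horizontal arrows live in rows $\ell+1,\ldots,k$ with respective lengths $k-\ell,k-\ell-1,\ldots,1$, contributing a factor $\prod_{j=1}^{k-\ell}(1-q^j)$; the vertical side contributes $\prod_{j=1}^{k-c}(1-q^j)$. The only possible forbidden corner created by two such arrows sits at $(i,j)=(1,1)$, because separation forces $\lambda_i=0$ (hence $j=1$) and $\lambda^{tr}_j=0$ (hence $i=1$), which in turn forces $\lambda=\emptyset$; in that case the two offending arrows both have length $k$, so their joint contribution has weight $q^{2k}\equiv0\pmod{q^k}$.

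Finally, since $|\lambda|\geq\ell$ and $|\lambda|\geq c$, replacing each truncated product by the infinite one produces an error of degree $\geq k-\ell+1$ (resp.\ $\geq k-c+1$), which after multiplication by $q^{2|\lambda|}$ has degree $\geq k+\ell+1$ (resp.\ $\geq k+c+1$) and therefore vanishes mod $q^k$. Combining these steps,
\[
\sum_{C\in\Dp{k}}\wt_q(C)\equiv\Bigl(\prod_{j\geq1}(1-q^j)\Bigr)^{2}\sum_{\lambda}q^{2|\lambda|}=\prod_{j\geq1}\frac{(1-q^j)^2}{1-q^{2j}}=\prod_{j\geq1}\frac{1-q^j}{1+q^j}\pmod{q^k}.
\]
The main delicate point is the second step: one must check that the only possible conflict between independent arrow choices is the $(1,1)$ forbidden corner, whose large weight lets it be absorbed into the error mod $q^k$.
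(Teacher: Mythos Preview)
Your proof is correct and follows essentially the same approach as the paper's (very terse) argument preceding the proposition: show that non-separated configurations have weight divisible by $q^k$, then factorize the separated ones into a partition piece and two arrow pieces. You have in fact filled in details the paper omits, notably the check that the only forbidden-corner obstruction among separated configurations occurs at $(1,1)$ with $\lambda=\emptyset$ and carries weight $q^{2k}$, and the passage from the truncated products $\prod_{j=1}^{k-\ell}$, $\prod_{j=1}^{k-c}$ to the infinite ones; one further small step you leave implicit is extending the sum $\sum_{\lambda\subset\delta_{k-1}}q^{2|\lambda|}$ to all partitions, which is harmless since any $\lambda\not\subset\delta_{k-1}$ contains a cell $(i,j)$ with $i+j\geq k+1$, hence $|\lambda|\geq ij\geq k$.
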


Letting $k\to\infty$ in Theorem~\ref{thm:dkc} and Proposition~\ref{thm:separate}, we get
\eqref{eq:gauss}. 

\subsection{A connection with the triple product identity}

Now we define the \emph{$(y,q)$-weight} of a \dkpc. $C=(\lambda,A)$ to be
\[\wt_{y,q}(C) = (-1)^{|A|} q^{2|\lambda| + \norm{A}} (-y)^{oh(A)-ov(A)},\]
where $oh(A)$ (resp.~$ov(A)$) is the number of odd-length horizontal (resp.~vertical)
arrows in $A$. For example, the $(y,q)$-weight of the \dkpc. in
Figure~\ref{fig:dk} is $(-1)^7 q^{2\cdot 8 + 1+3+4+3+3+3+2} (-y)^{3-2}$.

The proof of the lemma below is similar to that of Lemma~\ref{lem:wt}.
Here we define
\[\J =(1+yq, 1-q^2, 1+yq^3,1-q^4, \dots), \qquad
\J' =(1+y^{-1}q, 1-q^2, 1+y^{-1}q^3, 1-q^4, \dots).\]

\begin{lem}\label{lem:wt_yq}
For any nonnegative integer $k$, we have 
\[ \sum_{p\in \MD^*_k} \wt(p;\J-1,\J'-1) = 
q^{k(k+1)} \sum_{C\in\Dp{k}}\wt_{y,q^{-1}}(C). \]   
\end{lem}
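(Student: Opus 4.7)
The plan is to imitate the proof of Lemma~\ref{lem:wt}, using the same bijection $\MD^*_k\leftrightarrow\Dp{k}$ in which the north-west border of $\dk/\lambda$ traces out $p$, with marked up (resp.\ down) steps corresponding to horizontal (resp.\ vertical) arrows of $A$. The key geometric fact, already implicit in the proof of Lemma~\ref{lem:wt}, is that the height of a marked up step equals the length $k-i+1-\lambda_i$ of the row of $\dk/\lambda$ containing the corresponding horizontal arrow, and likewise for marked down steps and vertical arrows. Consequently $oh(A)$ (resp.\ $ov(A)$) is the number of marked up (resp.\ down) steps of $p$ at odd height.

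First I would decompose $\wt(p;\J-\one,\J'-\one)$ by writing $(\J-\one)_h = yq^h$ for $h$ odd and $-q^h$ for $h$ even, and similarly $(\J'-\one)_h = y^{-1}q^h$ for $h$ odd and $-q^h$ for $h$ even. The $q$-exponent then becomes the sum of heights of all unmarked steps, which by the same height-summation used in Lemma~\ref{lem:wt} equals $k(k+1)-2|\lambda|-\norm A$, agreeing with the $q$-exponent of $q^{k(k+1)}\wt_{y,q^{-1}}(C)$. The remaining $y$-factor is $y^{a-b}$, where $a$ (resp.\ $b$) is the number of unmarked up (resp.\ down) steps at odd height.

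The crucial new observation is that in any Dyck path the number of up steps at each fixed height $h$ equals the number of down steps at the same height. Summing this over odd $h$ and invoking the height-length correspondence yields $a+oh(A)=b+ov(A)$, so the LHS $y$-factor equals $y^{-(oh(A)-ov(A))}$. The same height equality summed over even $h$ shows that the number of unmarked even-height steps has the same parity as $|A|+oh(A)-ov(A)$, so the LHS sign matches the RHS sign $(-1)^{|A|+oh(A)-ov(A)}$. Together these identifications give $\wt(p;\J-\one,\J'-\one) = q^{k(k+1)}\wt_{y^{-1},q^{-1}}(C)$ pointwise.

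To conclude, the transposition $(\lambda,A)\mapsto(\lambda^{tr},A^{tr})$ that swaps horizontal and vertical arrows (and hence interchanges $oh$ and $ov$) is an involution on $\Dp{k}$ satisfying $\wt_{y,q}(C^{tr})=\wt_{y^{-1},q}(C)$. Reindexing $\sum_C \wt_{y^{-1},q^{-1}}(C)$ by this involution converts it to $\sum_C \wt_{y,q^{-1}}(C)$, yielding the stated identity. The main obstacle will be the careful sign bookkeeping and the precise proof of the height-length correspondence, both of which rest on the elementary fact that a Dyck path contains the same number of up and down steps at each height.
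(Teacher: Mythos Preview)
Your proposal is correct and follows essentially the same route the paper has in mind: it uses the bijection $\MD^*_k\leftrightarrow\Dp{k}$ from Section~\ref{sec:dk_config} together with the height\,$=$\,arrow-length correspondence, exactly as in Lemma~\ref{lem:wt}. Your computations of the $q$-exponent and the sign are right, and your key observation that a Dyck path has equally many up and down steps at each fixed height is precisely what is needed to control the $y$-power.

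One point worth making explicit for the write-up: under the bijection as drawn in Figure~\ref{fig:dk} (marked up steps $\leftrightarrow$ horizontal arrows, marked down steps $\leftrightarrow$ vertical arrows), the pointwise identity really is
\[
\wt(p;\J-\one,\J'-\one)=q^{k(k+1)}\wt_{y^{-1},q^{-1}}(C),
\]
not $\wt_{y,q^{-1}}(C)$, as you correctly note; the concrete example in Figure~\ref{fig:dk} already has $oh(A)-ov(A)=1$ while the path contributes $y^{-1}$. The paper's laconic ``similar to Lemma~\ref{lem:wt}'' glosses over this, and your transposition involution on $\Dp{k}$ is the clean way to close the gap. (Equivalently, one could read the border in the opposite direction, so that up steps correspond to columns and down steps to rows, and then the pointwise equality holds with $y$ directly; the two fixes amount to the same symmetry.) Either way, the argument is sound and matches the paper's intended proof.
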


By the same argument as in the proof of Proposition~\ref{thm:separate}
together with 
\[
\prod_{i\geq1} \! \frac{1}{1-q^{2i}}
\prod_{i\geq1} \! ({1-q^{2i}}) ({1+yq^{2i-1}}) 
\prod_{i\geq1} \! ({1-q^{2i}}) ({1+y^{-1}q^{2i-1}})
\! = \!  \prod_{i\geq1} \! (1-q^{2i})(1+yq^{2i-1})(1+y^{-1}q^{2i-1}),
\]
we obtain
the following.
\begin{prop}\label{thm:sepjtp}
For any nonnegative integer $k$, we have
\begin{equation}
  \label{eq:3}
\sum_{C\in\Dp{k}}\wt_{y,q}(C) \equiv 
\prod_{i\geq1} (1-q^{2i})(1+yq^{2i-1})(1+y^{-1}q^{2i-1})  \mod q^k.
\end{equation}
\end{prop}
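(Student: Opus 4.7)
The plan is to adapt the proof of Proposition~\ref{thm:separate}, carrying along the extra factor $(-y)^{oh(A)-ov(A)}$. Since this factor is a unit in $\mathbb{Z}[y,y^{-1}][[q]]$ depending only on arrow-length parities, it does not alter the $q$-adic order of $\wt_{y,q}(C)$. Hence the divisibility argument underlying Proposition~\ref{thm:separate} still isolates the \emph{separated} configurations modulo $q^k$, namely those in which every horizontal arrow occupies a row with $\lambda_i=0$ and every vertical arrow a column with $\lambda^{tr}_j=0$. I would make this explicit by noting that if $(\lambda,A)\in\Dp{k}$ has a horizontal arrow in row $i$ with $\lambda_i\geq 1$, then $|\lambda|\geq i\lambda_i$ and the arrow has length $k+1-i-\lambda_i$, so $2|\lambda|+\norm{A}\geq 2i\lambda_i+(k+1-i-\lambda_i)\geq k+i$; a symmetric bound handles vertical arrows meeting nonempty columns.

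In a separated configuration, writing $r$ for the number of nonzero parts of $\lambda$, the possible horizontal arrow lengths form a subset of $\{1,\dots,k-r\}$ (all distinct, since they sit in distinct rows of $\delta_k$), and similarly the vertical arrow lengths lie in $\{1,\dots,k-\lambda_1\}$. A horizontal arrow of length $\ell$ contributes $-q^\ell$ when $\ell$ is even and $yq^\ell$ when $\ell$ is odd; the vertical contribution is the same with $y$ replaced by $y^{-1}$. The crux is then a decoupling estimate: since $r\leq|\lambda|$, the truncated horizontal product $\prod_{\ell=1}^{k-r}(1+w^H_\ell)$ agrees with the infinite product modulo $q^{k-r+1}$, and this error combined with the $q^{2|\lambda|}$ from the partition has $q$-order at least $2r+(k-r+1)\geq k+1$; the vertical factor is analogous. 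The unique possible forbidden corner in a separated configuration is $(1,1)$ when $\lambda=\emptyset$, which would force two arrows of length $k$ and hence $q$-order $\geq 2k$, so it is also harmless modulo $q^k$.

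Multiplying the three resulting independent generating functions, the partition gives $\sum_\lambda q^{2|\lambda|}\equiv\prod_{i\geq 1}(1-q^{2i})^{-1}$, the horizontal gives $\prod_{i\geq 1}(1-q^{2i})(1+yq^{2i-1})$, and the vertical gives $\prod_{i\geq 1}(1-q^{2i})(1+y^{-1}q^{2i-1})$, all modulo $q^k$. The product simplifies by the identity displayed just before the proposition (one copy of $\prod(1-q^{2i})$ cancels the partition factor) to yield the right-hand side of~\eqref{eq:3}. The main delicate point I foresee is the decoupling estimate; once the bounds $r,\lambda_1\leq|\lambda|$ are in hand, the rest of the argument is routine and entirely parallel to the proof of Proposition~\ref{thm:separate}.
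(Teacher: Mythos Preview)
Your proof is correct and follows precisely the approach the paper indicates (``by the same argument as in the proof of Proposition~\ref{thm:separate}''), with the details of the divisibility bound, the decoupling estimate, and the forbidden-corner check spelled out explicitly. The paper leaves all of these implicit, so your version is a faithful and more detailed rendering of the intended argument.
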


Since the right-hand side of \eqref{eq:3} is one side of the triple product identity, it is natural to guess
\begin{equation}
  \label{eq:4}
 \sum_{C\in\Dp{k}}\wt_{y,q}(C) = \sum_{i=-k}^{k} y^i q^{i^2}.
\end{equation}

Using Lemma~\ref{lem:wt_yq} and the same argument as in \eqref{eq:6}, one can
see that \eqref{eq:4} is equivalent to Theorem~\ref{thm:main}. This is in fact
the way the authors first discovered Theorem~\ref{thm:main}. Notice also that by
Lemma~\ref{lem:wt_yq} and \eqref{eq:2}, we have
\[q^{k(k+1)} \sum_{C\in\Dp{k}}\wt_{y,q^{-1}}(C)=
\sum_{p\in \MD^*_k} \wt(p;\J-1,\J'-1)
=\sum_{p\in \SCH_k} \wt(p;\J,\J').\]
Thus \eqref{eq:4} is also equivalent to
\begin{equation}
  \label{eq:5}
\sum_{p\in \SCH_k} \wt(p;\J,\J') = \sum_{j=-k}^{k} y^j q^{k(k+1)-j^2}.
\end{equation}

We will prove this in the second part of this article.

\subsection{\texorpdfstring{Generalized $q$-secant numbers}{Generalized q-secant numbers}}

For two nonnegative integers $a$ and $b$, we define
\[ E_{n}^{a,b}(q) = [z^n] \left( \frac{1}{1} \cmo \frac{\qint{a+1}\qint{b+1}
    z}{1} \cmo \frac{\qint{a+2}\qint{b+2} z}{1} \cmo \cdots\right).\] 
Then $E_n^{0,0}(q)$ are the $q$-secant numbers as in \eqref{qsec},
and $E_n^{0,1}(q)$ are $q$-analogs of the tangent numbers, see \cite{josuat1}.
By \eqref{qsec}, we have
 \[ E_{n}^{0,0}(q) = \frac{1}{(1-q)^{2n}} 
 \sum_{k=0}^n \left( \tbinom{2n}{n-k} - \tbinom{2n}{n-k-1} \right)
q^{k(k+1)} T_k(q^{-1}),\] 
where $T_k(q) = \sum_{i=-k}^{k} (-q)^{i^2}$. Note that by \eqref{eq:gauss} we have
\[ \lim_{k\to\infty} T_k(q) = \frac{(q;q)_\infty}{(-q;q)_\infty},\] 
where we use
the usual notation $(a;q)_n = (1-a)(1-aq)\dotsm(1-aq^{n-1})$ and $(a;q)_\infty =
(1-a)(1-aq)\dotsm$.
We can generalize the above property to $E_{n}^{a,b}(q)$. Since the proofs of
the statements in the rest of this section are similar to those for the case
$a=b=0$, we will omit the proofs.
Let
\begin{align*}
\U_a &=(\qint{a+1}, \qint{a+2}, \ldots), & \U_b &=(\qint{b+1}, \qint{b+2}, \ldots),\\
\V_a &=(1-q^{a+1}, 1-q^{a+2}, \ldots), & \V_b &=(1-q^{b+1}, 1-q^{b+2}, \ldots).
\end{align*}
Then
\[
E_{n}^{a,b}(q) = \sum_{p\in\D_n} \wt(p;\U_a,\U_b) = \frac{1}{(1-q)^{2n}}
\sum_{k=0}^n \left( \tbinom{2n}{n-k} - \tbinom{2n}{n-k-1} \right)
\sum_{p\in \MD^*_k} \wt(p;\V_a-\one,\V_b-\one).
\]

For $(\lambda,A)\in \Dp k$, we define \[\wt_q^{a,b}(\lambda,A) = (-1)^{|A|} 
q^{2|\lambda| + \norm{A} + a\cdot h(A) + b \cdot v(A)},\] where $h(A)$
(resp.~$v(A)$) is the number of horizontal (resp.~vertical) arrows. Then 
we have
\[ \sum_{p\in \MD^*_k} \wt(p;\V_a-1,\V_b-1) = 
q^{k(k+1+a+b)} \sum_{(\lambda,A)\in\Dp{k}}\wt_{q^{-1}}^{a,b}(\lambda,A). \]   

\begin{prop}
For any nonnegative integer $k$, we have
\[ \sum_{C\in\Dp{k}}\wt_q^{a,b}(C) \equiv \prod_{i\geq1} \frac{1}{1-q^{2i}}
\prod_{i\geq a} ({1-q^{i}})\prod_{i\geq b} ({1-q^{i}}) 
=\frac{1}{(q;q)_a (q;q)_b} \cdot \frac{(q;q)_\infty}{(-q;q)_\infty}  \mod q^k.\]
\end{prop}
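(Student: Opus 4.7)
The plan is to adapt the argument of Proposition~\ref{thm:separate} to the shifted weight $\wt_q^{a,b}$, noting that passing from $\wt_q$ to $\wt_q^{a,b}$ only adds the nonnegative quantity $a\cdot h(A)+b\cdot v(A)$ to the $q$-exponent, so every ``divisible by $q^k$'' estimate from that proof remains valid with at worst the same threshold.

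First I would verify the analog of the separation step: if $(\lambda,A)\in\Dp{k}$ contains a horizontal arrow in a row $i$ with $\lambda_i\geq 1$, then combining $|\lambda|\geq i\lambda_i$ (the first $i$ parts are each at least $\lambda_i$) with the arrow length $\ell=k+1-i-\lambda_i$ gives
\[
2|\lambda|+\ell\;\geq\; k+1+(2i-1)\lambda_i-i\;\geq\; k+1,
\]
so $\wt_q^{a,b}(C)$ is divisible by $q^{k+1}$; the vertical case is symmetric. The only remaining obstruction to freely separating partition from arrows is the forbidden-corner rule, and once arrows are confined to empty rows/columns of $\lambda$ the only possible outer-corner conflict occurs with $\lambda=\emptyset$, a horizontal arrow of length $k$ in row~$1$, and a vertical arrow of length $k$ in column~$1$, contributing $q^{2k+a+b}$, which is again divisible by $q^k$.

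Consequently, modulo $q^k$ the sum $\sum_{C\in\Dp{k}}\wt_q^{a,b}(C)$ factors over three independent choices: a partition $\lambda$ with weight $q^{2|\lambda|}$, a subset of horizontal arrow lengths $\ell\geq 1$ each contributing $-q^{\ell+a}$, and analogously for vertical arrows with shift $b$. The truncations $\lambda\subset\delta_{k-1}$ and the ceiling on available arrow lengths (at most $k-\ell(\lambda)$ for horizontal, $k-\lambda_1$ for vertical) also introduce only $q^{k+1}$-errors once one multiplies by $q^{2|\lambda|}$, using $|\lambda|\geq\ell(\lambda)$ and $|\lambda|\geq\lambda_1$. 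This yields
\[
\sum_{C\in\Dp{k}}\wt_q^{a,b}(C)\;\equiv\;
\prod_{i\geq1}\frac{1}{1-q^{2i}}\cdot(q^{a+1};q)_\infty\cdot(q^{b+1};q)_\infty\pmod{q^k},
\]
and the identities $(q^{a+1};q)_\infty=(q;q)_\infty/(q;q)_a$ and $(q;q)_\infty^2/(q^2;q^2)_\infty=(q;q)_\infty/(-q;q)_\infty$ then produce the stated closed form.

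The main delicate point is not one hard estimate but keeping the bookkeeping of several simultaneously negligible contributions (arrows meeting a nonempty row or column of $\lambda$, arrows whose length exceeds the cutoff available in $\delta_k$, partitions not contained in $\delta_{k-1}$, and outer-corner conflicts); each is controlled by essentially the same exponent bound, but one must be careful that these errors are not double-counted when the three independent sums are recombined.
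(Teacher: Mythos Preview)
Your proposal is correct and takes essentially the same approach as the paper, which in fact omits the proof entirely and simply states that the argument is similar to the case $a=b=0$ (Proposition~\ref{thm:separate}). Your write-up supplies the details the paper leaves implicit, including the explicit exponent bound $2|\lambda|+\ell\geq k+1$ for non-separated arrows and the handling of the forbidden-corner and truncation errors.
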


\begin{thm}
  For nonnegative integers $a$ and $b$, there is a unique family
  $\{T_k^{a,b}(q)\}_{k\geq0}$ of functions of $q$ such that
  \[ E_{n}^{a,b}(q) = \frac{1}{(1-q)^{2n}} \sum_{k=0}^n \left( \tbinom{2n}{n-k}
    - \tbinom{2n}{n-k-1} \right) q^{k(k+1+a+b)} T_k^{a,b}(q^{-1}).\] Moreover,
  for all $k\geq0$, we have that $T_k^{a,b}(q)$ is a polynomial in $q$, and
\[
T_k^{a,b}(q) \equiv
\frac{1}{(q;q)_a (q;q)_b} \cdot \frac{(q;q)_\infty}{(-q;q)_\infty} \mod q^k,
\]
which implies
\[ \lim_{k\to\infty} T_k^{a,b}(q) = 
\frac{1}{(q;q)_a (q;q)_b} \cdot \frac{(q;q)_\infty}{(-q;q)_\infty}.\]
\end{thm}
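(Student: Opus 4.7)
The plan is to define
\[
T_k^{a,b}(q) \;=\; \sum_{C \in \Dp{k}} \wt_q^{a,b}(C),
\]
which is manifestly a polynomial in $q$ with integer coefficients (each term is a signed monomial $\pm q^{2|\lambda|+\norm{A}+a\cdot h(A)+b\cdot v(A)}$ and $\Dp{k}$ is finite). The asserted T-fraction formula then follows by chaining three pieces already on the table above the theorem. First, Lemma~\ref{lem1} applied with $\A=\U_a$, $\B=\U_b$ yields
\[
E_n^{a,b}(q) \;=\; \sum_{p\in\D_n}\wt(p;\U_a,\U_b) \;=\; \frac{1}{(1-q)^{2n}}\sum_{p\in\D_n}\wt(p;\V_a,\V_b),
\]
using that $\V_a=(1-q)\cdot\U_a$, $\V_b=(1-q)\cdot\U_b$ and that a Dyck path of length $2n$ has $n$ up and $n$ down steps. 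Second, Lemma~\ref{lem2} transforms this into a weighted sum over $\MD^*_k$ with the weights shifted by $\one$. Third, the identity
\[
\sum_{p\in\MD^*_k}\wt(p;\V_a-\one,\V_b-\one) \;=\; q^{k(k+1+a+b)}\,T_k^{a,b}(q^{-1})
\]
stated just before the theorem finishes the chain.

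Uniqueness reduces to the fact that the matrix $M_{n,k}=\binom{2n}{n-k}-\binom{2n}{n-k-1}$ is lower triangular with $M_{n,n}=1$: the sequence $\{E_n^{a,b}(q)\}_{n\ge0}$ determines each $q^{k(k+1+a+b)}\,T_k^{a,b}(q^{-1})$, and hence $T_k^{a,b}$ itself, recursively. The congruence modulo $q^k$ is precisely the content of the preceding proposition, and the limit then follows automatically, since congruences for successive $k$ force $q$-adic convergence in $\mathbb{Z}[[q]]$ to the stated infinite product.

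The only genuinely new ingredient is the identity invoked in the third step, i.e.\ the $(a,b)$-analogue of Lemma~\ref{lem:wt}. It is a routine extension of the $a=b=0$ bijection $\MD^*_k\leftrightarrow\Dp{k}$: shifting $\V\to\V_a$ (resp.\ $\V\to\V_b$) multiplies the weight of each unmarked up step (resp.\ unmarked down step) by $q^a$ (resp.\ $q^b$), producing an overall extra factor $q^{a(k-h(A))+b(k-v(A))}$ compared with the unshifted identity; combining this with the $a=b=0$ version and the factor $q^{k(k+1+a+b)}$ on the right-hand side yields exactly the claimed equality. I do not foresee any conceptual obstacle beyond this bookkeeping, which is presumably why the authors choose to omit the details.
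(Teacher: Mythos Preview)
Your proposal is correct and follows exactly the approach the paper intends: the paper omits the proof, saying it is similar to the $a=b=0$ case, and you have supplied precisely the chain (Lemma~\ref{lem1}, Lemma~\ref{lem2}, the $(a,b)$-analogue of Lemma~\ref{lem:wt}, and the preceding proposition) together with the invertibility of the triangular matrix for uniqueness, which the paper also notes right after the theorem. Your bookkeeping for the extra factor $q^{a(k-h(A))+b(k-v(A))}$ is correct and matches the bijection in Figure~\ref{fig:dk}, where marked up (resp.\ down) steps correspond to horizontal (resp.\ vertical) arrows.
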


The uniqueness of $\{T_k^{a,b}(q)\}_{k\geq0}$ comes from the fact that
the matrix $\left( \tbinom{2n}{n-k}  - \tbinom{2n}{n-k-1} \right)_{n,k\geq0}$ is
invertible. 

%%%%%%%%%%%%%%%%%%%%%%%%%%%%%%%%%%%%%%

\part{\texorpdfstring{New Touchard-Riordan--like formulas via the general case
    of the triple product identity}{New Touchard-Riordan–like formulas via the
    general case of the triple product identity}}

\section{Preliminaries}

We have seen in Lemma~\ref{lem2} from the previous part that the relation between the coefficients of
$S_\lambda(z)$ and those of $T_\lambda(z)$ can be proved combinatorially. Let us show that the
result can be proved analytically as well.

\begin{lem}[A reformulation of Lemma \ref{lem2}] \label{lem2bis}
Given a sequence $\lambda=\{\lambda_n\}_{n\geq1}$, we define
$\mu=\{\mu_n\}_{n\geq0}$ and $\nu=\{\nu_n\}_{n\geq0}$ such that:
\[
\sum_{n=0}^\infty \mu_nz^n = S_\lambda(z), \qquad
\sum_{n=0}^\infty \nu_nz^n = T_\lambda(z).
\]
Then for any $n\geq0$ we have the relation
$\mu_n = \sum\limits_{k=0}^n \left( \binom{2n}{n-k} - \binom{2n}{n-k-1} \right) \nu_k$.
\end{lem}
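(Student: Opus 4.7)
The plan is to derive an explicit functional identity between the formal power series $S_\lambda(z)$ and $T_\lambda(z)$ via a rational change of variable, and then to apply Lagrange inversion to extract the coefficient of $z^n$. This replaces the combinatorial binomial inversion used in the proof of Lemma~\ref{lem2} by a purely analytic argument.

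First I would establish the functional identity
\[
(1+z)\, T_\lambda(z) \;=\; S_\lambda\!\left(\frac{z}{(1+z)^2}\right)
\]
by direct manipulation of the continued fractions. Writing $T_\lambda(z) = 1/\bigl((1+z) - X_1\bigr)$ with $X_h = \lambda_h z/\bigl((1+z) - X_{h+1}\bigr)$, I set $Y_h := X_h/(1+z)$ and check that $Y_h = \lambda_h w/(1 - Y_{h+1})$, where $w := z/(1+z)^2$. Indeed, one factor of $1/(1+z)$ comes from the definition of $Y_h$, and a second factor arises when the leading $(1+z)$ in the denominator of $X_{h+1}$ is pulled out after rewriting $X_{h+1}/(1+z) = Y_{h+1}$. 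It follows that $(1+z)\, T_\lambda(z) = 1/(1 - Y_1) = S_\lambda(w)$.

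Next, the relation $z = w(1+z)^2$ is the standard Lagrange-inversion setup $z = w\phi(z)$ with $\phi(z) = (1+z)^2$ and $\phi(0)=1\neq 0$. Applying the coefficient form
\[
[w^n]\, H(z(w)) \;=\; [z^n]\, H(z)\,\phi(z)^n\!\left(1 - \frac{z\,\phi'(z)}{\phi(z)}\right)
\]
to $H(z) := (1+z)\,T_\lambda(z)$, and using the elementary simplification $1 - z\phi'(z)/\phi(z) = (1-z)/(1+z)$, I obtain
\[
\mu_n \;=\; [w^n]\, S_\lambda(w) \;=\; [z^n]\, T_\lambda(z)\,(1-z)(1+z)^{2n}.
\]
Expanding $(1-z)(1+z)^{2n} = \sum_j \bigl(\binom{2n}{j} - \binom{2n}{j-1}\bigr) z^j$ and convolving with $T_\lambda(z) = \sum_k \nu_k z^k$ then yields precisely the stated formula.

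I expect the main obstacle to be the first step: the continued-fraction manipulation must be justified at the level of formal power series, which I would do by interpreting $S_\lambda$ and $T_\lambda$ as $\mathbb{Z}[\![\lambda_1,\lambda_2,\dots]\!][\![z]\!]$-limits of their finite truncations and running the induction on $Y_h$ on those truncations. Once the functional identity is in hand, the Lagrange-inversion step and the binomial expansion are routine.
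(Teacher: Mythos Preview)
Your proposal is correct and follows essentially the same route as the paper: both derive the identity $(1+z)T_\lambda(z)=S_\lambda\!\bigl(z/(1+z)^2\bigr)$ by rescaling each level of the continued fraction by $1/(1+z)$, and both finish with Lagrange inversion for $z=w(1+z)^2$. The only cosmetic difference is that the paper expands $(1+z)T_\lambda(z)$ as $\sum_k \nu_k(1+z)z^k$ and computes $[u^n](1+z)z^k$ term by term via the basic form $[u^n]z^k=\tfrac{k}{n}\binom{2n}{n-k}$, whereas you apply the variant $[w^n]H(z)=[z^n]H(z)\,\phi(z)^n\bigl(1-z\phi'(z)/\phi(z)\bigr)$ once to $H=(1+z)T_\lambda$ and then read off the binomial convolution from $(1-z)(1+z)^{2n}$.
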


\begin{proof}
In each fraction of $T_\lambda(z)$, divide both the numerator and denominator by $1+z$. This gives
\begin{align*}
  T_\lambda(z) &=  \frac{(1+z)^{-1}}{1} \cmo \frac{\lambda_1 z(1+z)^{-1} }{1+z} \cmo \frac{\lambda_2 z}{1+z}
                  \cmo \frac{\lambda_3 z}{1+z} \cmo \dotsb \\[1mm]
   &=  \frac{(1+z)^{-1}}{1} \cmo \frac{\lambda_1 z(1+z)^{-2} }{1} \cmo \frac{\lambda_2 z(1+z)^{-1} }{1+z}
                  \cmo \frac{\lambda_3 z}{1+z} \cmo \dotsb \\[1mm]
   &=  \frac{(1+z)^{-1}}{1} \cmo \frac{\lambda_1 z(1+z)^{-2} }{1} \cmo \frac{\lambda_2 z(1+z)^{-2} }{1}
                  \cmo \frac{\lambda_3 z(1+z)^{-1} }{1+z} \cmo \dotsb,
\end{align*}
{\it i.e.} this shows an equivalence of continued fractions so that
\[
   T_\lambda(z) = \frac{1}{1+z}  S_\lambda\left(\frac{z}{(1+z)^2}\right).
\]
Let $u=z(1+z)^{-2}$, and note that $u=z+O(z^2)$. Let us write
\begin{equation} \label{rel_lem1}
S_\lambda(u)=(1+z)T_\lambda(z).
\end{equation}
Now it suffices to show that
\begin{equation} \label{gen_del}
(1+z)z^k = \sum_{n=k}^\infty
\left( \binom{2n}{n-k} - \binom{2n}{n-k-1} \right) u^n,
\end{equation}
because knowing this makes possible to identify coefficients in both sides
of \eqref{rel_lem1} with respect to the new variable $u$, thus completing the
proof.

Showing \eqref{gen_del} can be done using one of the various forms of Lagrange
inversion. For example, from \cite[Section 3.8, Theorem A]{comtet}, we directly
obtain that the coefficient of $u^n$ in $z^k$ is
\[
  [u^n]z^k = \frac kn [t^{n-k}] (1+t)^{2n} = \frac kn \binom{2n}{n-k},
\]
and eventually the coefficient of $u^n$ in $(z+1)z^k$ is
\begin{equation*}\begin{split}
  [u^n](z+1)z^k &= \frac {k+1}n \binom{2n}{n-k-1} + \frac {k}n \binom{2n}{n-k} \\[1mm]
   &=  \frac{2k+1}{n+k+1}\binom{2n}{n-k} = \binom{2n}{n-k} - \binom{2n}{n-k-1}.
\end{split}\end{equation*}
This completes the proof.
\end{proof}

Note that the intermediate form $\frac{2k+1}{n+k+1}\binom{2n}{n-k}$ in the above demonstration might
seem more compact than the difference of binomial coefficients. The latter form has other
benefits, for example it makes apparent that these numbers satisfy a simple recurrence
similar to binomial coefficients.

We will need a technical result of uniform convergence for continued fractions. This will be used in
Sections \ref{secjtp} and \ref{secgeno}.

\begin{lem} \label{lemconv}
Let $\{w_n\}_{n\geq1}$ be an arbitrary sequence of formal power series in the variable
$z$, and let $t_n(z)$ be the $n$th modified approximant of $T_\lambda(z)$ with respect
to $zw_n$, {\it i.e.} the finite continued fraction:
\[
t_n(z) =   \frac{1}{1+z} \cmo \frac{\lambda_1 z}{1+z} \cmo \frac{\lambda_2 z}{1+z}
               \cmo \dotsb \cmo  \frac{\lambda_n z}{1+z w_n}.
\]
Then $t_n(z)$ converges to $T_\lambda(z)$ with respect to the usual ultrametric distance
on formal power series.
\end{lem}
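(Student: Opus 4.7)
The plan is to show directly that $T_\lambda(z) - t_n(z) = O(z^{n+1})$ as formal power series in $z$, which is precisely convergence to zero in the ultrametric distance. The idea is to propagate an error estimate upward from the bottom level of the finite continued fraction to the top.

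For each $k \geq 1$, let $\tau_k(z)$ denote the tail of $T_\lambda(z)$ starting at the $k$th fraction, namely
\[
\tau_k(z) = \frac{1}{1+z} \cmo \frac{\lambda_k z}{1+z} \cmo \frac{\lambda_{k+1}z}{1+z} \cmo \dotsb,
\]
so that $T_\lambda(z) = \tau_1(z)$ and $\tau_k(z) = 1/\bigl(1 + z - \lambda_k z\,\tau_{k+1}(z)\bigr)$. Analogously, for $1 \leq k \leq n+1$, let $t_n^{(k)}(z)$ denote the partial tail of the modified approximant starting at the $k$th fraction, with the convention $t_n^{(n+1)}(z) = 1/(1 + z w_n)$ and $t_n(z) = t_n^{(1)}(z)$. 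A straightforward induction shows that all these series are well-defined with constant term equal to $1$, so the denominators that appear are invertible as formal power series.

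The heart of the argument is a reverse induction on $k \in \{1,\dots,n+1\}$ proving the estimate
\[
\tau_k(z) - t_n^{(k)}(z) = O(z^{n-k+2}).
\]
The base case $k=n+1$ is immediate since both $\tau_{n+1}(z)$ and $1/(1+zw_n)$ equal $1 + O(z)$. For the induction step, set $A_k = 1 + z - \lambda_k z\,\tau_{k+1}(z)$ and $B_k = 1 + z - \lambda_k z\,t_n^{(k+1)}(z)$; both have constant term $1$, so
\[
\tau_k(z) - t_n^{(k)}(z) = \frac{B_k - A_k}{A_k B_k} = \frac{\lambda_k z\bigl(\tau_{k+1}(z) - t_n^{(k+1)}(z)\bigr)}{A_k B_k},
\]
and the extra factor of $z$ together with the inductive estimate $\tau_{k+1} - t_n^{(k+1)} = O(z^{n-k+1})$ yields the bound $O(z^{n-k+2})$. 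Taking $k = 1$ gives $T_\lambda(z) - t_n(z) = O(z^{n+1})$, which is the desired convergence.

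The obstacles are minor: one must only verify that every denominator formed is invertible, which is automatic because each one is of the form $1 + z(\cdots)$ with the parenthesized part a formal power series (possibly involving the coefficients of $w_n$). The estimate itself is essentially forced by the structural fact that at every level of the T-fraction the numerator carries an explicit factor of $z$, so each step of the recursion boosts the valuation of the difference by exactly one.
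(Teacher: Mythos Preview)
Your proof is correct and follows essentially the same approach as the paper: both propagate an error estimate from the bottom of the continued fraction to the top, gaining one factor of $z$ at each level. The paper phrases this as showing that $t_n(z)\bmod z^{n+1}$ is independent of $w_n$ and then picking the particular $w_n$ that makes $t_n(z)=T_\lambda(z)$, whereas you compare the tails $\tau_k$ and $t_n^{(k)}$ directly; the underlying computation is the same.
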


\begin{proof}
Let us prove that the first $n$ Taylor coefficients of $t_n(z)$ do not depend on $w_n$.
To begin, we have:
\[
 \frac{\lambda_nz}{1+zw_n} = \lambda_nz + O(z^2).
\]
Going one step further, we have
\[
  \frac{\lambda_{n-1}z}{1+z}  \cmo  \frac{\lambda_nz}{1+zw_n} = \frac{\lambda_{n-1}z}{1+z-\lambda_nz+O(z^2)}
  = \frac{\lambda_{n-1}z}{1+z-\lambda_nz} + O(z^3).
\]
After $n$ such steps, we can obtain an expression of $t_n(z)$ that does not depend on $w_n$ up to a $O(z^{n+1})$.
A particular choice of $w_n$ is such that $t_n(z)=T_\lambda(z)$, namely $w_n=\lambda_{n+1}T_{\lambda'}(z)$ where
$\lambda'$ is the shifted sequence $\{\lambda_{n+2+k}\}_{k\geq0}$.
It follows that for an arbitrary choice of $w_n$, we have
\[
 t_n(z) - T_\lambda(z) = O(z^{n+1}).
\]
This proves the convergence.
\end{proof}

Of course, it would be possible to give an analogous result for S-fractions, but we will not
need it. Note that this statement is no longer true when $w_n$ are formal Laurent series,
and in some proofs below, a key point is to check that some $w_n$ which are {\it a priori}
Laurent series, are actually well-defined at 0.

\section{\texorpdfstring{A ``finite version'' of Jacobi's triple product}{A “finite version” of Jacobi's triple product}}
\label{secjtp}

By Lemma~\ref{lem2bis}, Theorem~\ref{thm:main} is equivalent to the following
theorem.  This is also equivalent to \eqref{eq:4} and \eqref{eq:5}, and as we
have seen, these identities give Jacobi's triple product identity when $k$ tends to infinity.

\begin{thm} \label{jtp_th}
There holds
\begin{equation} \label{jtp_frac}
     \sum_{k=0}^\infty z^k \sum_{j=-k}^{k} y^j q^{k(k+1)-j^2}
   =
      \frac{1}{1+z} \cmo \frac{(1+qy)(1+q\y)z}{1+z} \cmo
      \frac{(1-q^2)^2z}{1+z} \cmo \dotsb , 
\end{equation}
the continued fraction being $T_\lambda(z)$ with
$\lambda_n=(1+q^ny)(1+q^ny^{-1}) $ for odd $n$ and $\lambda_n=(1-q^n)^2$ for
even $n$.
\end{thm}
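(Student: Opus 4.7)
The plan is to establish a matching functional recursion on both sides of \eqref{jtp_frac} and invoke Lemma~\ref{lemconv} for uniqueness at the level of formal power series. Since Lemma~\ref{lem2bis} shows that Theorems~\ref{thm:main} and~\ref{jtp_th} are equivalent, it suffices to work with the T-fraction form. Write $L(y,z) := \sum_{k\geq0} z^k \sum_{j=-k}^{k} y^j q^{k(k+1)-j^2}$ for the left-hand side and $T(y,z)$ for the T-fraction; both are elements of $\mathbb{Z}[y,y^{-1},q][[z]]$ and the goal is $L=T$.

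The first concrete step is to swap the order of summation in $L$. Writing $k = |j|+m$ with $m\geq0$, one obtains
\[
L(y,z) = P_0(z) + \sum_{j\geq1}\bigl((qyz)^j + (qy^{-1}z)^j\bigr) P_j(z),
\]
where the partial theta series $P_j(z) := \sum_{m\geq0} z^m q^{m(m+2j+1)}$ satisfies the three-term recursion $P_j(z) = 1 + zq^{2j+2}\,P_{j+1}(z)$. This recursion should feed the peeling of the T-fraction, whose odd-indexed numerators $(1+q^{2m-1}y)(1+q^{2m-1}y^{-1})$ alternate with the even-indexed numerators $(1-q^{2m})^2$. Using the $P_j$-expansion I would manipulate $L$ into the form
\[
L(y,z) = \cfrac{1}{(1+z) - (1+qy)(1+qy^{-1})\,z\cdot R_1(y,z)}
\]
for an explicit remainder $R_1$, and iterate: apply the same procedure to $R_1$ to reveal a factor $(1-q^2)^2 z$, then extract $(1+q^3 y)(1+q^3 y^{-1}) z$ from the next remainder, and so on. An encouraging low-order sanity check: comparing coefficients of $z^1$ on both sides of $(1+z)L - 1 = (1+qy)(1+qy^{-1})\,z\,L\,R_1$, together with $[z^1]L = q(y+y^{-1})+q^2$, gives $[z^1]R_1 = q^4 - 2q^2 = (1-q^2)^2 - 1$, exactly the right Taylor coefficient for a tail T-fraction starting with $\lambda_2 = (1-q^2)^2$. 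After $n$ such iterations, $L$ is realized as the $n$-th modified approximant of $T$ closed off by some series $w_n(z)$, and Lemma~\ref{lemconv} then forces $L = T$.

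The hard part will be carrying out the iteration in a uniform, closed form. The T-fraction admits no clean shift symmetry under $y\mapsto qy$ or $z\mapsto qz$, since odd and even levels depend on $y$ in incompatible ways; each step of the peeling therefore has to be verified by direct algebraic manipulation of the $P_j$-expansion rather than by a single master substitution. A further technical point, already highlighted in the proof of Lemma~\ref{lemconv}, is that the closing series $w_n$ produced by the iteration must actually lie in $\mathbb{Z}[y,y^{-1},q][[z]]$ rather than merely in Laurent series in $z$, since the convergence statement fails otherwise; ensuring this well-definedness at every stage is the main technical subtlety of the argument.
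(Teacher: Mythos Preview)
Your strategy is sound in outline but diverges from the paper's proof, and the divergence is precisely at the point you flag as ``the hard part.'' You propose to peel the T-fraction layer by layer, producing remainders $R_1,R_2,\dots$ and invoking Lemma~\ref{lemconv}. The paper instead exploits a single functional equation in $z$: writing $H(z)$ for the left-hand side, one checks directly from the double sum that
\[
H(z)=\frac{1}{1-yqz}+\frac{1}{1-y^{-1}qz}-1+zq^2\,H(zq^2),
\]
and this determines $H$ uniquely as a power series. The remaining work is to show the T-fraction satisfies the same relation; this is done by encoding two consecutive levels of the continued fraction as a single M\"obius transformation $M(w,z)$ and verifying a recursive matrix identity (Lemma~\ref{lem_omega}) for the product $\Omega_n=M(q^{2n-2},z)^{-1}\!\cdots M(1,z)^{-1}S\,M(1,zq^2)\cdots M(q^{2n-2},zq^2)$. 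The closed form of $\Omega_n$ yields $\Omega_n[0]$ as an explicit power series, and Lemma~\ref{lemconv} finishes the argument.

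Your claim that ``the T-fraction admits no clean shift symmetry under $z\mapsto qz$'' is the place where you miss the paper's idea: the shift $z\mapsto zq^2$ advances the index by two and therefore respects the odd/even alternation of the $\lambda_n$. This is what makes the functional equation tractable. By contrast, your layer-by-layer peeling would require a uniform closed form for the successive tails $R_n$ (alternating between two structurally different steps), and you have not supplied one; the $P_j$-expansion you set up does not obviously reorganize into the required form after even a single peel. The sanity check you give is correct (though note it is really a $z^2$-coefficient comparison that yields $[z^1]R_1$), but it does not indicate how to carry the induction. In short, your plan is not wrong, but the step you defer is exactly the substance of the proof, and the paper's $z\mapsto zq^2$ device gives a concrete and shorter route to it.
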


\begin{proof}
We show that both sides
satisfy a common functional equation. Indeed, if $H(z)$ is the left-hand side
of \eqref{jtp_frac} and $c_{k,j}(z)=z^ky^jq^{k(k+1)-j^2}$, we have
\[
 H(z)=\sum_{k,j\in\mathbb{Z}, \; k\geq |j|} c_{k,j}(z), \qquad c_{k+1,j}(z)=zq^2c_{k,j}(zq^2),
\]
and it follows that
\[
  H(z) - zq^2H(zq^2) = \sum_{j\in\mathbb{Z}} c_{|j|,j}(z) = \frac{1}{1-yqz} + \frac{1}{1-y^{-1}qz} - 1.
\]
To show the latter equality, note that when $k=|j|$ the term $k^2$ cancels with $-j^2$ in $c_{k,j}(z)$
and splitting the $j$-sum according to the sign of $j$ gives two geometric series. So $H(z)$
is the unique formal power series in $z$ satisfying the functional equation
\begin{equation} \label{funeqH}
  H(z) = \frac{1}{1-yqz} + \frac{1}{1-y^{-1}qz} - 1 + zq^2H(zq^2).
\end{equation}
The uniqueness comes from the fact that \eqref{funeqH} gives a relation between $a_n$ and $a_{n-1}$ if $a_n$
is the coefficient of $z^n$ in $H(z)$, so all coefficients are determined by $a_{0}=1$.
It remains only to show that the continued fraction in the right-hand side
of \eqref{jtp_frac} satisfies the same functional equation, which is done in
a separate lemma below.
\end{proof}

\begin{lem} \label{funeq1}
Let $\lambda$ be the sequence in Theorem \ref{jtp_th}, then we have
\begin{equation} \label{funeqT}
  T_\lambda(z) = \frac{1}{1-yqz} + \frac{1}{1-y^{-1}qz} - 1 + zq^2 T_\lambda(zq^2).
\end{equation}
\end{lem}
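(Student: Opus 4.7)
The plan is to verify the functional equation by manipulating the continued fraction $T_\lambda(z)$ directly, using Lemma~\ref{lemconv} to make convergence rigorous. The first move is a partial-fraction computation: one checks that
\[
\frac{1}{1-yqz} + \frac{1}{1-\y qz} - 1 = \frac{1-q^2z^2}{(1-yqz)(1-\y qz)},
\]
so the identity to prove is equivalent to $T_\lambda(z) - zq^2\, T_\lambda(zq^2) = \frac{1-q^2z^2}{(1-yqz)(1-\y qz)}$.

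Next I would unfold the first two levels of the continued fraction, using the explicit forms $\lambda_1 = (1+qy)(1+q\y)$ and $\lambda_2 = (1-q^2)^2$:
\[
T_\lambda(z) = \cfrac{1}{1+z - \cfrac{(1+qy)(1+q\y)\,z}{1+z - (1-q^2)^2\,z\cdot T^{(3)}(z)}},
\]
where $T^{(3)}(z)$ denotes the tail continued fraction starting from $\lambda_3$. In parallel, I would expand $T_\lambda(zq^2)$ as a T-fraction in $z$. The two-level parity structure of the sequence $\{\lambda_n\}$, namely $\lambda_{2k-1}$ being a product of two $y$-factors and $\lambda_{2k}$ being the square of a $q$-factor, suggests that an equivalent transformation of the continued fraction (a rescaling of successive numerators and denominators) can align $T^{(3)}(z)$ with $T_\lambda(zq^2)$ up to the desired rational multiplier.

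To justify these manipulations on infinite continued fractions, I would pass to modified approximants via Lemma~\ref{lemconv}. For a carefully chosen auxiliary sequence $\{w_n\}$, the $n$-th modified approximant $t_n(z)$ of $T_\lambda(z)$ should satisfy a finite algebraic identity of the form
\[
t_n(z) = \frac{1-q^2z^2}{(1-yqz)(1-\y qz)} + zq^2\, \tilde t_{n-2}(zq^2),
\]
where $\tilde t_{n-2}$ is itself a modified approximant of $T_\lambda$. Letting $n \to \infty$ and applying Lemma~\ref{lemconv} on both sides then yields the functional equation. A technical point flagged by the lemma that must be checked is that the chosen $w_n$ are genuine formal power series and not merely Laurent series.

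The main obstacle is the finite-level algebraic identity in the third step. Since the substitution $z \mapsto zq^2$ in $T_\lambda$ does not translate into a clean shift $\lambda_n \mapsto \lambda_{n+2}$ of the sequence, the equivalent transformation relating the tail $T^{(3)}(z)$ to $T_\lambda(zq^2)$ must be built level by level, with rescaling factors that accumulate telescopically. It is precisely this telescoping that must produce the rational factor $\frac{1-q^2z^2}{(1-yqz)(1-\y qz)}$, and verifying that the telescoping closes cleanly is the heart of the proof, reflecting the fact that the sequence $\lambda_n$ is specifically tailored to Jacobi's triple product.
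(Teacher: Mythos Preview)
Your plan is essentially the paper's approach: reduce to a finite-level identity between modified approximants, check that the auxiliary tails $w_n$ are honest power series, and invoke Lemma~\ref{lemconv} to pass to the limit. The paper, however, does not work with ``telescoping rescaling factors'' directly but packages the two-level step as a $2\times 2$ matrix $M(w,z)$ acting by M\"obius transformation, so that $T_\lambda(z)=M(1,z)M(q^2,z)M(q^4,z)\cdots$, and encodes the right-hand side of \eqref{funeqT} as a single matrix $S$. The finite-level identity you want then becomes
\[
M(1,z)\cdots M(q^{2n-2},z)\,[w_n]\;=\;S\,M(1,zq^2)\cdots M(q^{2n-2},zq^2)\,[0],
\]
with $w_n=\Omega_n[0]$ for $\Omega_n=M(q^{2n-2},z)^{-1}\cdots M(1,z)^{-1}\,S\,M(1,zq^2)\cdots M(q^{2n-2},zq^2)$.

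The piece you flag as ``the heart of the proof'' and leave open is exactly where the paper does real work: it proves a closed form for $\Omega_n$ by the one-step recursion $\Omega_{n+1}=M(q^{2n},z)^{-1}\Omega_n M(q^{2n},zq^2)$, and from that closed form reads off that $\Omega_n[0]$ has no pole at $z=0$. Your description of level-by-level rescalings is not wrong, but without the matrix organization it is unclear how you would discover or verify the closed form; the M\"obius framework is precisely what makes the inductive step a mechanical (if tedious) polynomial identity rather than an unstructured computation. Also note that your attempt to relate the tail $T^{(3)}(z)$ directly to $T_\lambda(zq^2)$ is a red herring: no such clean identification exists, and the paper does not seek one---the matrix $\Omega_n$ tracks the discrepancy at every level simultaneously.
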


\begin{proof}
We will identify $2\times2$ matrices and M\"obius transformations in the usual way:
\[
\begin{pmatrix} a&b \\ c&d \end{pmatrix} [X] = \frac{aX+b}{cX+d},
\]
{\it i.e.} we use a bracket notation for the evaluation of a Möbius transformation.
A continued fraction can be obtained by iterating such transformations.
In the present case, we have
\[
  \tfrac{1}{1+z- \tfrac{ (1+wqy)(1+wqy^{-1} )z  }{ 1+z - (1-wq^2)^2zX }   }
=
  \tfrac{ z(1-wq^2)^2X - (1+z) }{ z(1+z)(1-wq^2)^2 X + (1+wqy)(1+wqy^{-1})z-(1+z)^2  },
\]
so we can introduce the matrix
\begin{equation} \label{def_m}
M(w,z) =
  \left(\begin{mmatrix}
    z(1-wq^2)^2      &   -1-z \\
    z(1+z)(1-wq^2)^2 &  (1+wqy)(1+wqy^{-1})z-(1+z)^2
  \end{mmatrix}\right),
\end{equation}
and we have
\begin{equation} \label{prod0}
 T_\lambda(z) = M(1,z) M(q^2,z) M(q^4,z)\dotsb .
\end{equation}
The partial products are just the convergents of the continued fraction.  More
precisely, the infinite product is convergent in the following sense: the
partial products are M\"obius transformations, and these converge pointwise to
the formal power series in the left-hand side of \eqref{prod0}.

Let $S$ be the matrix
\[
  S = \begin{pmatrix} zq^2 & \tfrac{1}{1-zqy}+\tfrac{1}{1-zqy^{-1}}-1 \\ 0 & 1    
\end{pmatrix},
\]
so that the functional equation to be proved is $T_\lambda(z) = S\big[ T_\lambda(zq^2) \big] $. This can be written:
\begin{equation}  \label{funeqprod}
 M(1,z) M(q^2,z) M(q^4,z) \dotsm = S M(1,zq^2) M(q^2,zq^2) M(q^4,zq^2) \dotsm .
\end{equation}
By examining the previous equation, it is natural to introduce a matrix $\Omega_n$ by
\begin{equation} \label{def_omega}
\Omega_n =  M(q^{2n-2},z)^{-1} \dotsm M(1,z)^{-1}  S M(1,zq^2) \dotsm M(q^{2n-2},zq^2),
\end{equation}
where we understand that only even powers of $q$ appear within the dots. In particular,
we have $\Omega_0=S$. It can be calculated explicitly, as given in Lemma~\ref{lem_omega}
below, so that we obtain:
\[
 \Omega_n [0] = \frac{1-z^2q^2}{zq^{2n+1}(2qz - y - y^{-1}) +1-z^2q^2}.
\]
The important point is that from this closed form, we can check that $\Omega_n [0]$ is well-defined at $z=0$
({\it i.e.} it has no pole at $z=0$). Let $w_n=\Omega_n [0] $, by definition of $\Omega_n$ we have:
\begin{equation} \label{matrixeq1}
  M(1,z) \dotsm M(q^{2n-2},z) [w_n] = S M(1,zq^2) \dotsm M(q^{2n-2},zq^2) [0] ,
\end{equation}
and at this point it remains only to let $n$ tend to infinity in \eqref{matrixeq1} to
prove \eqref{funeqprod}, which is a rewriting of \eqref{funeqT}. The only subtlety is
in the left-hand side, where we need the fact that $w_n$ is indeed a formal power series
in $z$ (as opposed to a formal Laurent series) to take the limit. Indeed, in this
case we can apply Lemma~\ref{lemconv}, so that the left-hand side of \eqref{matrixeq1}
converges to $T_\lambda(z)$.
\end{proof}

\begin{lem}  \label{lem_omega}
The matrix $\Omega_n$ defined in \eqref{def_omega} has the explicit form:
\begin{equation} \label{omegan}
 \Omega_n =
\frac{
\left(\begin{mmatrix}
   q^2z ( 2q^{2n}  - zq^{2n+1}(y+y^{-1})  + z^2q^2 -1  )    & 1-z^2q^2 \\
  (1-q^{2n})^2(z^2q^2-1)zq^2 & zq^{2n+1}(2qz - y - y^{-1}) +1-z^2q^2
\end{mmatrix}\right)
}{(1-yzq)(1-y^{-1}zq)}.
\end{equation}
\end{lem}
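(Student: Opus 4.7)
The plan is to establish the formula by induction on $n$, exploiting the recurrence
\[
\Omega_{n+1} = M(q^{2n},z)^{-1}\, \Omega_n\, M(q^{2n},zq^2),
\]
which follows immediately from \eqref{def_omega} by peeling off the outermost factors on each side. The base case $n=0$ is the equality $\Omega_0 = S$, which is the definition.

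For the base case I would substitute $n=0$ into \eqref{omegan} and simplify each entry. The $(2,1)$-entry vanishes thanks to the factor $(1-q^{0})^2=0$. Using the identity $(1-yzq)(1-y^{-1}zq) = 1 - zq(y+y^{-1}) + z^2q^2$, the $(1,1)$-entry collapses to $q^2 z$ and the $(2,2)$-entry to $1$. A short partial-fraction calculation shows that the $(1,2)$-entry $\frac{1-z^2q^2}{(1-yzq)(1-y^{-1}zq)}$ equals $\frac{1}{1-yzq} + \frac{1}{1-y^{-1}zq} - 1$. Altogether, the matrix produced by \eqref{omegan} at $n=0$ is exactly $S$.

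For the inductive step, assuming \eqref{omegan} holds for $\Omega_n$, it suffices to verify the matrix identity $M(q^{2n},z)\,\Omega_{n+1} = \Omega_n\,M(q^{2n},zq^2)$, where both $\Omega_n$ and $\Omega_{n+1}$ are replaced by the right-hand side of \eqref{omegan}. Since the denominator $(1-yzq)(1-y^{-1}zq)$ is independent of $n$, it cancels from both sides after clearing, and the identity reduces to a polynomial equation between two $2\times 2$ matrices with entries in $\mathbb{Z}[z, y^{\pm 1}, q^{\pm 1}, w]$, where $w:=q^{2n}$ is treated as a formal parameter. Comparing entries yields four polynomial identities, each verifiable by direct expansion.

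The main obstacle is purely the bookkeeping of this matrix multiplication, not any conceptual difficulty. A useful structural guide that keeps the computation tractable is the determinant $\det M(w,z) = z^2(1-wq^2)^2(1+wqy)(1+wqy^{-1})$: it makes explicit the denominators introduced by $M(q^{2n},z)^{-1}$, and one can check that they are exactly cancelled by compensating factors in $\Omega_n$ evaluated at $zq^2$, so that the denominator $(1-yzq)(1-y^{-1}zq)$ is indeed preserved at each step of the induction.
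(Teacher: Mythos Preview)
Your approach is essentially identical to the paper's: both verify the claimed formula by induction on $n$, using the base case $\Omega_0=S$ and the recurrence $\Omega_{n+1}=M(q^{2n},z)^{-1}\Omega_n M(q^{2n},zq^2)$, reducing the inductive step to a routine (if bulky) polynomial identity that the paper likewise leaves to direct expansion or a computer algebra check. Your added details---the entry-by-entry verification of the base case and the rewriting of the inductive step as $M(q^{2n},z)\,\Omega_{n+1}=\Omega_n\,M(q^{2n},zq^2)$ to avoid inverting $M$---are helpful refinements within the same strategy.
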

\begin{proof}
  Although calculations are quite cumbersome, there is a straightforward
  recursive verification of the given expression, using the relations $\Omega_{0}=S$ and
\begin{equation} \label{omega_rec}
 \Omega_{n+1} =  M(q^{2n},z)^{-1}  \Omega_{n}  M(q^{2n},zq^2)
\end{equation}
for $n\geq 0$.
There are $4$ coefficients in $\Omega_{n}$, each appears as a sum of $4$ terms when we expand the previous equation,
and each of these term is a product of $3$ coefficients of the matrices in \eqref{def_m} and \eqref{omegan}. So there is a
small ``explosion'' of the size of computations to perform. However, this is a verification that can be done with no particular
cleverness, since expanding everything in \eqref{omega_rec} will clearly makes possible a term-by-term identification of
both sides. We omit details and invite the unconvinced reader to use some computer algebra system for checking that
the lemma is true.
\end{proof}

\section{\texorpdfstring{New Touchard-Riordan--like formulas}{New Touchard-Riordan–like formulas}}

From Theorem \ref{jtp_th}, we can derive a whole family of S-fractions having
a Touchard-Riordan--like formula, as given in the theorem below. A very
interesting property of these is that there are exponential generating functions
linked with trigonometric functions. The theorem below is also a wide
generalization of the result in \eqref{qsec}, which is related with a $q$-analog
of secant numbers having exponential generating function $\sec(z)$.

Note that in the definition of $\qint{n}=(1-q^n)/(1-q)$, $n$ can be any number, not
necessarily an integer.

\begin{thm} \label{theoS}
For any numbers $a$ and $b$, we define $\mu_n(a,b,q)$ by
\begin{equation} \label{def_mu}
 \sum_{n=0}^\infty \mu_n(a,b,q) z^n = S_\lambda(z),
\quad \hbox{ where } \quad
\lambda_n =
\begin{cases}
  \left[ nb+a\right]_q\left[nb-a\right]_q
     & \hbox{ if $n$ is odd},\\[2mm]
  \left[nb \right]_q^2
     & \hbox{ if $n$ is even}.\\
\end{cases}
\end{equation}
Then we have
\[
\mu_n(a,b,q) = \frac 1{(1-q)^{2n}} \sum_{k=0}^n
   \left(\tbinom{2n}{n-k} - \tbinom{2n}{n-k-1}\right)
   \sum_{j=-k}^{k} (-1)^j q^{  aj+ b(k(k+1)-j^2) },
\]
and
\begin{equation} \label{coscos}
\sum_{n=0}^\infty \mu_n(a,b,1) \frac{z^n}{n!} = \frac{\cos(az)}{\cos(bz)}.
\end{equation}
Moreover, $\mu(a,b,q)$ is a polynomial in $q$ with nonnegative coefficients in the following situations:
\begin{itemize}
 \item $a$ and $b$ are integers such that $0\leq a <b$,
 \item $a$ and $b$ are half-integers such that $0\leq a <b$ (the set of half-integers is $\frac12+\mathbb{Z}$).
\end{itemize}
\end{thm}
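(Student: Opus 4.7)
The plan is to derive the Touchard--Riordan-like formula from Theorem~\ref{thm:main} by a substitution, and then to handle the EGF identity \eqref{coscos} and the polynomiality/positivity claims as separate corollaries.

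First, I would apply the joint substitution $q \mapsto q^b$ and $y \mapsto -q^a$ in Theorem~\ref{thm:main}. Under this substitution, the odd-indexed factor $\yqint{n}\yiqint{n}$ becomes
\[
\frac{(1-q^{nb+a})(1-q^{nb-a})}{(1-q^b)^2}
\;=\; [nb+a]_q\,[nb-a]_q \cdot \frac{(1-q)^2}{(1-q^b)^2},
\]
while the even-indexed factor $\qint{n}^2$ (after $q\mapsto q^b$) becomes $[nb]_q^2 \cdot (1-q)^2/(1-q^b)^2$. Thus every $\lambda_n$ of Theorem~\ref{thm:main} is obtained from the $\lambda_n$ of \eqref{def_mu} by multiplying by the same constant $c = (1-q)^2/(1-q^b)^2$. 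Since rescaling all $\lambda_n$ by $c$ amounts to the change of variable $z \mapsto cz$ in $S_\lambda(z)$, it follows that $\mu_n(a,b,q) = c^{-n}\cdot [z^n]\,S_{\lambda^{\mathrm{sub}}}(z)$; substituting the Touchard--Riordan expansion on the right-hand side from Theorem~\ref{thm:main} and simplifying (the $(1-q^b)^{2n}$ factors cancel out) yields exactly the claimed formula.

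Second, for the identity \eqref{coscos} at $q=1$, I would appeal to a classical S-fraction expansion: at $q=1$ the weights degenerate to $\lambda_n = (nb)^2$ for even $n$ and $\lambda_n = (nb)^2 - a^2$ for odd $n$, and this is the well-known contracted S-fraction corresponding (in the usual convention for moments of symmetric orthogonal polynomials) to $\cos(az)/\cos(bz)$. A self-contained alternative is to extract the $q \to 1$ limit of the Touchard--Riordan-like formula already established: the inner sum vanishes at $q=1$ to order $2n$, and expanding the exponentials to this order exhibits $\mu_n(a,b,1)$ as an explicit polynomial in $a^2, b^2$ which one then matches against the Taylor coefficients of $\cos(az)/\cos(bz)$.

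Third, for polynomiality and nonnegativity, Lemma~\ref{lem1} gives
\[
\mu_n(a,b,q) \;=\; \sum_{p\in\D_n}\wt(p;\A,\B)
\]
under the natural factorization $a_h = [hb+a]_q$, $b_h = [hb-a]_q$ for odd $h$, and $a_h = b_h = [hb]_q$ for even $h$. In both cases of the theorem (integers $0 \le a < b$, or half-integers $a,b \in \tfrac{1}{2}+\mathbb{Z}$ with $0 \le a < b$), the exponents $hb$ and $hb \pm a$ are positive integers for $h \ge 1$: in the half-integer case, $hb \pm a$ is an integer for odd $h$ (sum/difference of half-integers) and $hb$ is an integer for even $h$ (writing $hb = (h/2)(2b)$). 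Each entry of $\A$ and $\B$ is therefore $[k]_q = 1 + q + \dots + q^{k-1}$ for some positive integer $k$, a polynomial in $q$ with nonnegative coefficients. Consequently every weight $\wt(p;\A,\B)$ is such a polynomial, and so is $\mu_n(a,b,q)$.

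The main obstacle is the EGF identification \eqref{coscos}, which relies on invoking or independently reproving a classical continued fraction expansion for $\cos(az)/\cos(bz)$; the first and third parts are direct formal consequences of Theorem~\ref{thm:main} and Lemma~\ref{lem1}, respectively.
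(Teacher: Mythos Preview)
Your proposal is correct and follows essentially the same route as the paper: the substitution $(y,q)\mapsto(-q^a,q^b)$ in Theorem~\ref{thm:main} (the paper does it in the equivalent T-fraction form \eqref{jtp_frac} and then invokes Lemma~\ref{lem2bis}), the appeal to the classical Stieltjes continued fraction for $\cos(az)/\cos(bz)$, and the observation that each $\lambda_n$ factors into genuine $q$-integers under the stated integrality hypotheses. The paper additionally supplies an alternative, self-contained derivation of \eqref{coscos} by adapting an argument of Dumont, but this is presented as a supplement rather than as the primary proof.
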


\begin{remark}
Before proving this, note that if $nb\pm a=0$ for some odd $n\geq1$, so that $\lambda_n=0$, then on one
hand $S_\lambda(z)$ is a finite continued fraction, and on the other hand the exponential generating
function in \eqref{coscos} is a polynomial in $\cos(bz)$.
\end{remark}

\begin{proof}
  Consider the identity obtained by substituting $(y,q)$ with
  $(-q^{a},q^{b})$ in \eqref{jtp_frac}, and apply Lemma~\ref{lem2bis} to this
  T-fraction. This gives the desired formula for $(1-q)^{2n}\mu_n(a,b,q)$.

  One easily check that $\lambda_n$ is a $q$-integer when
  $a$ and $b$ are both integers or both half-integers such that $0 \leq a < b$.
  It follows that $\mu_n(a,b,q)$ is indeed a polynomial in $q$ with nonnegative coefficients
  in this situation.

  It remains only to obtain the exponential generating function of
  $\mu_n(a,b,1)$ as a ratio of cosines. Actually, this was essentially known by
  Stieltjes \cite{stieltjes} via analytical methods.  It is also possible to
  prove this going through an addition formula satisfied by $\cos(az)/\cos(bz)$,
  and using a theorem of Stieltjes and Rogers, see for example in \cite[Chapter
  5]{goulden} (this method is generally well-suited for trigonometric
  functions).
\end{proof}

Note that this makes apparent that a function such as $\cos(z/2)/\cos(3z/2)$ is the
exponential generating function for a sequence of nonnegative integers.  
Theorem~\ref{theoS} opens some combinatorial problems, for a better understanding of these
quantities $\mu_n(a,b,q)$. We can ask if there is a model 
from which both the ordinary generating function and the exponential one (for
$q=1$) can be obtained. It would be quite remarkable to obtain the
continued fraction on one side and the trigonometric function on the other side.

We can give an alternative proof of \eqref{coscos} using generating functions. This will take the rest of this
section, and it is an adaptation of a result from \cite{dumont}. Indeed, Dumont proves in Corollary 3.3 of his article \cite{dumont}
a continued fraction related with the exponential generating function $\cosh(z)/\cosh(2z)$, which is essentially
the case $a=1,b=2$ of our general result in \eqref{coscos}. We show here that his argument can be extended, and
it is a very interesting proof: first for its originality, second because it is elementary in the sense that it does not rely
on any important theorem such as the one of Stieltjes and Rogers.

Here, for a sequence $\{u_n\}_{n\geq0}$ the series $\sum_{n=0}^\infty u_n\frac{z^n}{n!}$ will be called
its exponential generating function and the series $\sum_{n=0}^\infty u_nz^{n+1}$ its ordinary generating function.
We will denote by $\mathcal{L}[f]$ the formal Laplace transform of a series $f$, which send
an exponential generating function to the ordinary one, {\it i.e.}
\[
   f(z)=\sum_{n=0}^\infty u_n \frac{z^n}{n!}  \quad \Longleftrightarrow \quad \mathcal{L}[f](z) = \sum_{n=0}^\infty u_n z^{n+1}.
\]
Note that the general term in the latter series is $u_nz^{n+1}$ and not $u_nz^n$. This is convenient
in this context, because of the following:

\begin{lem} \label{dumlem1}
Let $f(z)=\sum_{n=0}^\infty u_n \frac{z^n}{n!} $ be a series, then we have:
\[
  \mathcal{L}\left[ f(z) e^{az} \right](z) = \mathcal{L}[f]\Big( \frac{z}{1-az}  \Big).
\]
\end{lem}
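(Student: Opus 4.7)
The plan is to verify the identity by expanding both sides as formal power series in $z$ and matching coefficients of $z^{n+1}$.

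First I would expand $f(z)e^{az}$ as an exponential generating function. Since the product of two EGFs corresponds to binomial convolution, writing $e^{az} = \sum_{k\geq 0} a^k z^k/k!$ gives
\[
f(z)e^{az} = \sum_{n=0}^\infty \left( \sum_{k=0}^n \binom{n}{k} u_k a^{n-k} \right) \frac{z^n}{n!},
\]
so applying $\mathcal{L}$ (which simply strips the $n!$ and shifts the exponent by one) yields
\[
\mathcal{L}\!\left[f(z)e^{az}\right](z) = \sum_{n=0}^\infty z^{n+1} \sum_{k=0}^n \binom{n}{k} u_k a^{n-k}.
\]

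For the right-hand side, I would substitute $w = z/(1-az)$ into $\mathcal{L}[f](w)=\sum_{k\geq 0} u_k w^{k+1}$, obtaining a sum of terms $u_k z^{k+1}(1-az)^{-(k+1)}$. Expanding each factor $(1-az)^{-(k+1)}$ by the negative binomial series $\sum_{j\geq 0}\binom{k+j}{k} a^j z^j$ and re-indexing with $n=k+j$ produces exactly the same double sum $\sum_{n\geq 0} z^{n+1} \sum_{k=0}^n \binom{n}{k} u_k a^{n-k}$. The two expressions agree term by term, which proves the lemma.

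There is no genuine obstacle here: the only point requiring any care is to ensure that all manipulations make sense as formal power series. This is fine because $z/(1-az) = z + O(z^2)$ has zero constant term, so the substitution into $\mathcal{L}[f]$ produces a well-defined formal power series and the interchange of the summations over $k$ and $j$ is legitimate (for each fixed $n$, only finitely many pairs $(k,j)$ contribute to the coefficient of $z^{n+1}$).
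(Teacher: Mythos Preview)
Your proof is correct and is essentially identical to the paper's own argument: both compute the coefficient sequence $v_n=\sum_{k=0}^n\binom{n}{k}u_k a^{n-k}$ of $f(z)e^{az}$ and then expand $\big(\tfrac{z}{1-az}\big)^{k+1}=\sum_{n\ge k}\binom{n}{k}a^{n-k}z^{n+1}$ to match. The only difference is cosmetic---you write the expansion as $z^{k+1}(1-az)^{-(k+1)}$ via the negative binomial series and re-index, whereas the paper states the resulting expansion directly.
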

\begin{proof}
 Let us form the new sequence $\{v_n\}_{n\geq0}$ where $v_n=\sum_{k=0}^n \binom n k u_k a^{n-k}$. Its exponential
generating function is $f(z) e^{az}$. Using the expansion
\[
 \Big(\frac{z}{1-az}\Big)^{k+1} = \sum_{n\geq k} \binom n k a^{n-k}z^{n+1},
\]
we obtain its ordinary generating function as $\mathcal{L}[f]\big( \tfrac{z}{1-az}  \big)$.
\end{proof}

\begin{lem}
There is a unique series $f(z)$ such that $f(z)e^{(a+1)z}$ is even and $(1-f(z))e^{az}$ is odd. It is explicitly
given by
\begin{equation} \label{def_f}
  f(z)= \frac{\cosh(az)}{\cosh(z)} e^{-(a+1)z} .
\end{equation}
\end{lem}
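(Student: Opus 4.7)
The plan is to establish existence and uniqueness separately, and both parts reduce to direct manipulations of formal power series.

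For uniqueness, suppose $f_1$ and $f_2$ both satisfy the conditions, and set $g = f_1 - f_2$. Then $g(z)e^{(a+1)z}$ is even (as the difference of two even series) while $g(z)e^{az}$ is odd. Writing $O(z) := g(z)e^{az}$, we have $g(z)e^{(a+1)z} = O(z)e^z$, and the evenness of this last product reads $O(-z)e^{-z} = O(z)e^z$. Combined with $O(-z) = -O(z)$ this yields $O(z)(e^z + e^{-z}) = 0$. Since $2\cosh(z)$ has constant term $2$, it is invertible in the ring of formal power series, so $O(z) = 0$, and then $g(z) = 0$ because $e^{az}$ is also invertible.

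For existence, I would simply verify that $f(z) = \frac{\cosh(az)}{\cosh(z)}e^{-(a+1)z}$ satisfies both conditions. The first is transparent: $f(z)e^{(a+1)z} = \cosh(az)/\cosh(z)$, a ratio of even series, hence even. For the second, compute
\[
(1-f(z))e^{az} = \frac{e^{az}\cosh(z) - \cosh(az)e^{-z}}{\cosh(z)},
\]
expand each factor in the numerator using $\cosh(z) = (e^z+e^{-z})/2$ and $\cosh(az) = (e^{az}+e^{-az})/2$, and observe that the two $e^{(a-1)z}/2$ terms cancel, leaving the numerator equal to $(e^{(a+1)z}-e^{-(a+1)z})/2 = \sinh((a+1)z)$. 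Thus $(1-f(z))e^{az} = \sinh((a+1)z)/\cosh(z)$, which is odd since $\sinh$ is odd and $\cosh$ is even.

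There is no real obstacle here: the only point worth a moment's care is that $f(z)$ is a well-defined element of the ring of formal power series, which holds because $\cosh(z)$ has constant term $1$ and is therefore invertible, while $\cosh(az)$ and $e^{-(a+1)z}$ are power series for any constant $a$. The whole argument is self-contained and does not rely on any earlier result in the paper.
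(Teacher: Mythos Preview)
Your proof is correct. The existence half is the same as the paper's: both verify directly that $f(z)e^{(a+1)z}=\cosh(az)/\cosh(z)$ is even and that $(1-f(z))e^{az}=\sinh((a+1)z)/\cosh(z)$ is odd, with you spelling out the numerator computation a bit more explicitly.

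For uniqueness you take a genuinely different route. The paper argues that if $g(z)=\sum u_n z^n/n!$ satisfies the two parity conditions, then the vanishing of the odd (respectively even) coefficients of $g(z)e^{(a+1)z}$ and $(1-g(z))e^{az}$ yields a recursion that determines each $u_n$ from its predecessors, so $g$ is uniquely determined. Your argument is more algebraic: you subtract two putative solutions, observe that the difference $g$ makes $O(z)=g(z)e^{az}$ odd and $O(z)e^z$ even, and deduce $O(z)\cdot 2\cosh(z)=0$, hence $O=0$ because $\cosh(z)$ is a unit in the formal power series ring. Your approach is cleaner and self-contained; the paper's coefficient-by-coefficient recursion, while slightly heavier, is in keeping with the surrounding discussion (it mirrors the Seidel--Arnold triangle viewpoint from Dumont's work that motivates this characterization of $f$).
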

\begin{proof} With $f(z)$ defined in \eqref{def_f}, clearly $f(z)e^{(a+1)z}$ is even, and we have
\[
 (1-f(z))e^{az} = \frac{e^{(a+1)z} - e^{-(a+1)z} }{ 2\cosh(z) },
\]
so the proposed function satisfies the conditions in the lemma. To prove the uniqueness,
suppose that $g(z)=\sum_{n=0}^\infty u_n \frac{z^n}{n!}$ satisfies the same conditions, we show that the coefficients $u_n$
can be computed recursively, hence are fully determined. To begin, $u_0=0$ since $(1-g(z))e^{az}$ is odd. More
generally:
\begin{itemize}
 \item cancel the coefficient of $z^{2n+1}$ in $g(z)e^{(a+1)z}$ gives an expression of $u_{2n+1}$ which only depends
         on $a$ and $u_0,\dots,u_{2n}$,
\item  cancel the coefficient of $z^{2n}$ in $(1-g(z))e^{az}$ gives an expression of $u_{2n}$ which only depends on
         $a$ and $u_0, \dots , u_{2n-1}$.
\end{itemize}
This implies the uniqueness.
\end{proof}

Out of the context, it might seem curious to characterize a series by such a
criterion as in the previous lemma.  In Dumont's article \cite{dumont}, the
characterization of $\cosh(z)/\cosh(2z)$ in these terms
appears naturally since it is an interpretation of the Arnold-Seidel triangles
for Springer numbers at the level of exponential generating functions. The
important property of this characterization of $f(z)$ is that it is easily
translated in terms of $\mathcal{L}[f](z)$, using Lemma~\ref{dumlem1} on one
side, and on the other side, the fact that $f$ is odd or even if and only if $\mathcal{L}[f](z)$
is respectively even or odd.

To use this characterization of $\mathcal{L}[f](z)$ we will need the following:

\begin{lem}[First and second contractions of continued fractions]  \label{contractions}
  The following three representations of a series $F(z)$ are equivalent:
\begin{align}
  F(z) &= \frac z 1 \cpo \frac{c_1z}{1} \cpo \frac{c_2z}{1} \cpo \frac{c_3z}{1} \cpo \dotsb, \\
  F(z) &= \frac z {1+c_1z} \cmo \frac{c_1c_2z^2}{1+(c_2+c_3)z} \cmo \frac{c_3c_4z^2}{1+(c_4+c_5)z} \cmo \dotsb, \label{cont1}  \\
  F(z) &= z - \frac {c_1z^2} {1+(c_1+c_2)z} \cmo \frac{c_2c_3z^2}{1+(c_3+c_4)z} \cmo \frac{c_4c_5z^2}{1+(c_5+c_5)z} \cmo \dotsb . \label{cont2}
\end{align}
\end{lem}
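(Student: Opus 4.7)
The plan is to establish both \eqref{cont1} and \eqref{cont2} by the classical even/odd contraction trick for continued fractions. That is, I would track the numerator and denominator convergents $P_n, Q_n$ of the S-fraction $\tfrac{z}{1}\cpo\tfrac{c_1z}{1}\cpo\dotsb$, apply their fundamental three-term recurrence twice in a row, and verify that the resulting two-step recurrence is exactly the recurrence for the convergents of the J-fraction on the right-hand side, with matching initial conditions.

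First I would introduce the convergents $F_n = P_n/Q_n$ of the S-fraction, so that with the usual initial data $P_{-1}=1$, $P_0=0$, $Q_{-1}=0$, $Q_0=1$, the fundamental recurrence reads $P_n = P_{n-1}+c_{n-1}z\,P_{n-2}$ and analogously for $Q_n$ (with $c_0 = 1$, say, to start things off). Substituting the recurrence at step $n{-}1$ into that at step $n$, a short elimination yields the two-step recurrence
\begin{equation*}
P_{2n} = \bigl(1+(c_{2n-2}+c_{2n-1})z\bigr)P_{2n-2} - c_{2n-3}c_{2n-2}z^2\,P_{2n-4},
\end{equation*}
and an analogous one for the odd convergents $P_{2n+1}$ involving the coefficients $c_{2n-1}+c_{2n}$ and $c_{2n-2}c_{2n-1}z^2$. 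Each is precisely the recurrence governing the convergents of the J-fraction on the right-hand side of \eqref{cont1} (respectively of \eqref{cont2}).

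To finish \eqref{cont1} I would check the first two even convergents: $P_2/Q_2 = z/(1+c_1 z)$ is the first convergent of the claimed J-fraction, and $P_4/Q_4 = z(1+(c_2+c_3)z)/(1+(c_1+c_2+c_3)z+c_1c_3 z^2)$ is its second; an induction based on the identical two-step recurrences then gives $F_{2n}=$ the $n$th convergent of \eqref{cont1} for all $n$. For \eqref{cont2}, the odd convergents start at $F_1 = z$, which accounts for the leading ``$z - \dotsb$'' in \eqref{cont2}; the same two-step recurrence with shifted indices, together with a direct check at $n=2$, closes the corresponding induction. Since the $n$th convergent of the original S-fraction agrees with $F(z)$ up to order $z^{n+1}$ (cf. Lemma~\ref{lemconv} applied with $w_n=0$ and adapted to the $\cpo$ convention), convergence as formal power series yields both \eqref{cont1} and \eqref{cont2}.

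The only real obstacle is careful bookkeeping of the indices in the two-step recurrence, particularly near the boundary where the first J-fraction convergent is irregular (the ``$z - \dotsb$'' prefix in \eqref{cont2} arising from $P_1/Q_1=z$). There is no genuine analytic difficulty: the computation is a routine instance of contraction of continued fractions, classically attributed to Galois and developed systematically in Perron's and Wall's treatises.
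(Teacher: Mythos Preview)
Your approach is correct and is precisely the standard argument for even/odd contraction. The paper itself offers no proof of this lemma, merely citing it as ``common knowledge'' with a reference to Wall's treatise~\cite{wall}; your outlined argument via the two-step recurrence for the convergents $P_n/Q_n$ is exactly the proof one finds in such references, so there is nothing to compare.
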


This is common knowledge in continued fractions, see for example \cite[Chapter 1]{wall}.
The following statement and its proof are adapted from Dumont's article~\cite{dumont}.

\begin{lem} \label{dumlem4}
 Let $c_0=0$ and $F(z)$ be the series in the previous lemma. The following conditions are equivalent:
\begin{itemize}
 \item $F\big(\frac{z}{1-(a+1)z}\big)$ is odd and $\frac{z}{1-az} - F\big(\frac{z}{1-az}\big)$ is even,
 \item for all $n\geq0$, there holds $c_{2n}+c_{2n+1}=a+1$, $c_{4n+1}+c_{4n+2}=2a$, $c_{4n+3}+c_{4n+4}=0$.
 \item for all $n\geq0$, there holds $c_{4n}=-2n$, $c_{4n+1}=2n+1+a$, $c_{4n+2}=-2n-1+a$, $c_{4n+3}=2n+2$.
\end{itemize}
\end{lem}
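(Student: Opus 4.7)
The plan is to handle the three equivalences in two parts. The equivalence $(2)\Leftrightarrow(3)$ is an elementary recursion: starting from $c_0=0$ and using the three families of relations in $(2)$ in sequence, every $c_n$ is determined uniquely, and a case check against the explicit formulas of $(3)$ confirms they match. All of the substantive work lies in $(1)\Leftrightarrow(2)$, which I would establish by treating the two halves of $(1)$ separately and exploiting the contractions of Lemma~\ref{contractions}. The common tool is a simple substitution identity: multiplying numerator and denominator by $1-cz$ at every level shows that substituting $z\mapsto z/(1-cz)$ in a J-fraction $\alpha_0 z^m/(1+b_0z - \alpha_1 z^2/(1+b_1z-\cdots))$ shifts every linear denominator coefficient $b_k$ to $b_k - c$; for $m=1$ this produces a bona fide J-fraction in $z$, whereas for $m=2$ an unavoidable factor $1-cz$ remains in front.

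For the odd condition on $F(z/(1-(a+1)z))$: apply the first contraction \eqref{cont1} (so $m=1$) and substitute $z\mapsto z/(1-(a+1)z)$, which yields a J-fraction in $z$ with shifted linear coefficients $c_1-(a+1)$ and $c_{2k}+c_{2k+1}-(a+1)$. A standard parity/uniqueness argument (compare $G(z)$ with $-G(-z)$ level by level) shows that a J-fraction of the form $z/(1+b_0z - \alpha_1 z^2/(1+b_1z - \cdots))$ is odd in $z$ iff every $b_k$ vanishes; this instantly gives the first family $c_{2n}+c_{2n+1}=a+1$ of $(2)$.

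For the even condition on $z/(1-az) - F(z/(1-az))$: this is the more delicate step. Apply the second contraction \eqref{cont2} to get $z-F(z) = c_1 z^2/G_0(z)$ (now $m=2$), substitute $z\mapsto z/(1-az)$, and carry along the leftover $1-az$ factor, yielding
\[
\frac{z}{1-az}-F\!\left(\tfrac{z}{1-az}\right) = \frac{c_1 z^2}{(1-az)\,E_0(z)},
\]
where the auxiliary series $E_k(z) = 1 + (c_{2k+1}+c_{2k+2}-a)\,z - c_{2k+2}c_{2k+3}\,z^2/E_{k+1}(z)$ obey a genuine J-fraction recursion in $z$. The desired parity is equivalent to $(1-az)E_0(z)$ being even. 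Expanding to linear order forces $c_1+c_2=2a$; with this relation, $(1-az)E_0(z)=1-a^2 z^2 - c_2 c_3\,z^2(1-az)/E_1(z)$, so evenness reduces to evenness of $(1-az)/E_1(z)$. Expanding that to linear order forces $c_3+c_4=0$; using this, one factors $E_1(z) = (1-az)\bigl[1 - c_4 c_5\,z^2/((1-az)E_2(z))\bigr]$, and the evenness of $(1-az)/E_1(z)$ reduces to that of $(1-az)E_2(z)$, which is the starting form shifted by two levels.

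An induction on $k\geq 0$ then extracts the remaining relations $c_{4k+1}+c_{4k+2}=2a$ and $c_{4k+3}+c_{4k+4}=0$, completing $(1)\Leftrightarrow(2)$. I expect the main obstacle to be exactly this second half: the substitution into a $z^2$-type J-fraction does not produce a clean J-fraction in $z$, so one must set up the auxiliary series $E_k$ carefully and run a two-step alternating parity induction between the shapes ``$(1-az)E_{2k}$ even'' and ``$(1-az)/E_{2k+1}$ even''. Once this bookkeeping is in place, the verification at each level reduces to a one-line computation of the coefficient of $z$, and the result follows.
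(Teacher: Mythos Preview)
Your proposal is correct and follows the same overall strategy as the paper: use the first contraction \eqref{cont1} for the oddness condition, the second contraction \eqref{cont2} for the evenness condition, and reduce $(2)\Leftrightarrow(3)$ to an elementary recursion. For the oddness half your argument is identical to the paper's.

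For the evenness half there is a genuine difference in execution worth noting. The paper substitutes $z\mapsto z/(1-az)$ directly into \eqref{cont2} and renormalizes to display a single continued fraction whose denominators alternate between the shapes $(1-az)^2+(c_{4n+1}+c_{4n+2})z(1-az)$ and $1+(c_{4n+3}+c_{4n+4})z$, then reads off all the relations at once from the linear terms. Your approach instead absorbs only one factor of $(1-az)$, introduces the auxiliary series $E_k$, and runs a two-step alternating induction between the shapes ``$(1-az)E_{2k}$ even'' and ``$(1-az)/E_{2k+1}$ even''. Your route is a bit longer but makes the ``only if'' direction more transparent: at each step the vanishing of the coefficient of $z$ is forced \emph{before} the next simplification is available, so the relations are extracted one at a time rather than asserted simultaneously. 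The paper's single display, by contrast, is cleaner to state but leaves the reader to supply exactly this inductive unwinding (and in fact its even-level denominators only take the stated form once $c_{4n+3}+c_{4n+4}=0$ is already in hand). Both arguments are the same idea; yours is the more carefully unpacked version.
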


\begin{proof}
Substitute $z$ with $\frac{z}{1-(a+1)z} $ in \eqref{cont1}, this gives:
\begin{equation}  \label{Fexp} \begin{split}
 F\Big(\frac{z}{1-(a+1)z}\Big) = \frac z {1-(a+1)z+c_1z}  \cmo  &  \frac{c_1c_2z^2}{1-  (a+1)z+(c_2+c_3)z}  \cmo \\
   & \qquad  \frac{c_3c_4z^2}{1-(a+1)z+(c_4+c_5)z} \cmo \dotsb . 
\end{split}\end{equation}
This continued fraction is odd if and only if the terms $-(a+1)z+c_1z$, $-(a+1)z+(c_2+c_3)z$, {\it etc.} are 0, more
precisely $c_1=a+1$ and $c_{2n}+c_{2n+1}=a+1$ for all $n\geq0$. Then, substitute
$z$ with $\frac{z}{1-az}$ in \eqref{cont2}, this gives:
\begin{equation*}\begin{split}
   \frac{z}{1-az} - F\left(\frac{z}{1-az} \right) & = \frac {c_1z^2} {(1-az)^2+(c_1+c_2)z(1-az)} \cmo \frac{c_2c_3z^2}{1+(c_3+c_4)z} \cmo \\
    & \qquad\qquad \frac{c_4c_5z^2}{(1-az)^2+(c_5+c_6)z(1-az)} \cmo \frac{c_6c_7z^2}{1+(c_7+c_8)z} \cmo\dotsb .
 \end{split}
\end{equation*}
After expanding the terms $(1-az)^2$ and $z(1-az)$, we can see that this continued fraction is even if and only if 
$c_1+c_2=2a$, $c_3+c_4=0$, $c_5+c_6=2a$, $c_7+c_8=0$, {\it etc.}, more precisely
$c_{4n+3}+c_{4n+4}=0$, $c_{4n+1}+c_{4n+2}=2a$ for all $n\geq0$.

Thus we have proved the equivalence between the first two items in the list. The equivalence between the last two
items is elementary and details are omitted.
\end{proof}

With all the previous lemmas, we can prove the second part of Theorem~\ref{theoS}, {\it i.e.}
obtain the exponential generating function \eqref{coscos} from \eqref{def_mu}.

\begin{proof}
  After the substitution $(a,b)\mapsto(ai,i)$, 
  what we want can be rephrased as:
\begin{equation}  \label{rephrased}
 \mathcal{L}\left[ \frac{\cosh(az)}{\cosh(z)} \right](z) =
 \frac z 1 \cpo \frac{(1-a^2)z^2}{1} \cpo \frac{4z^2}{1} \cpo \frac{(9-a^2)z^2}{1} \cpo \frac{16z^2}{1} \cpo \dotsb .
\end{equation}
There is no loss of generality in this substitution since we have
\[
 \mu_n(a,b,1) = b^{2n} \mu_n \big( \tfrac ab,1,1 \big).
\]
Let $f(z)$ be the series in \eqref{def_f}, and $F=\mathcal{L}[f]$. It follows that $F(z)$ satisfies the conditions in
Lemma~\ref{dumlem4}. We have:
\begin{equation*} \begin{split}
 \mathcal{L}\left[ \tfrac{\cosh(az)}{\cosh(z)} \right](z) &= 
\mathcal{L}\left[ f(z)e^{(a+1)z} \right](z) = F\big( \tfrac{z}{1-(a+1)z} \big)
   = \frac z1 \cmo \frac{c_1c_2z^2}{1} \cmo \frac{c_3c_4z^2}{1} \cmo \dotsb , \\
\end{split}
\end{equation*}
where the last equality comes from \eqref{Fexp} and the relations satisfied by
$c_i$ in the second point in the list of Lemma~\ref{dumlem4}. Then from the
explicit form in the third point of the same list, we have $c_1c_2=a^2-1$,
$c_3c_4=-4$, {\it etc.} and this way we obtain
\eqref{rephrased}, thus completing the proof.
\end{proof}

\section{\texorpdfstring{A Touchard-Riordan--like formula for $q$-Genocchi numbers}{A Touchard-Riordan--like formula for q-Genocchi numbers}}
\label{secgeno}

In this section, we prove a Touchard-Riordan--like formula for a $q$-analog of Genocchi numbers.
Whereas the previous results were related with the triple product identity, the one in this section
will be related with the following, also due to Jacobi:
\begin{equation} \label{jacobicube}
   \prod_{n=1}^\infty (1-q^n)^3 = \sum_{i=0}^{\infty} (-1)^i (2i+1) q^{\binom{i+1}2 }.
\end{equation}

Genocchi numbers $\{ G_{2n} \}_{n\geq1}$ can be defined through their generating function:
\begin{equation*} 
    \sum_{n=1}^\infty G_{2n} \frac{z^{2n}}{ (2n)! } = z \cdot \tan\Big(\frac z2 \Big).
\end{equation*}
The $q$-analog $G_{2n}(q)$ of these numbers can be defined by:
\begin{equation} \label{gen_frac}
    \sum_{n=1}^{\infty}  G_{2n}(q) z^{n-1} =
      \frac 11  \cmo \frac{[1]_{q}[1]_{q}z}1 \cmo
      \frac{[1]_{q}[2]_{q}z} 1 \cmo
      \frac{[2]_{q}[2]_{q}z} 1 \cmo
      \frac{[2]_{q}[3]_{q}z} 1 \cmo \dotsb ,
    \end{equation}
    {\it i.e.} the generating function is $S_\lambda(z)$ where
    $\lambda_n = [  (n+1)/2 ]_q ^2  $ for odd $n$
    and $\lambda_n = [ n/2]_q [ n/2+1 ]_q $ for even $n$.
    The fact that $G_{2n}(1)=G_{2n}$ follows from a continued fraction 
    given by Viennot \cite{viennot}, which is the case $q=1$ of $S_\lambda(z)$ as above.
    This $G_{2n}(q)$ was also considered in \cite{BJS}, where it is defined combinatorially
    in terms of Dumont permutations.

The following result was conjectured in the first author's Phd thesis \cite{josuat2}. We prove it in this
section, using the same method as in the previous section, and make the link with \eqref{jacobicube} in the next section.

\begin{thm} \label{qgeno}
Let
\begin{equation} \label{def_Pk}
  P_k=\sum_{i=0}^k (-1)^i(2i+1) q^{ \binom{k+1}{2} - \binom{i+1}{2} },
 \qquad
  Y_k = \frac{ P_{k-1} + 2P_{k} + P_{k+1} } {q-1}.
\end{equation}
Then $Y_k$ is a polynomial in $q$ such that for any $n\geq 0$ we have
\begin{equation}\label{eq:GY}
   G_{2n+2}(q) = \frac 1{(1-q)^{2n}} \sum_{k=0}^n
   \left(\binom{2n}{n-k} - \binom{2n}{n-k-1}\right) Y_k.
\end{equation}
\end{thm}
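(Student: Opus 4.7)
The plan is to adapt the matrix argument of Lemma~\ref{funeq1} that proves Theorem~\ref{jtp_th}. Write $\lambda_n = \tilde\lambda_n/(1-q)^2$ with $\tilde\lambda_{2m-1}=(1-q^m)^2$ and $\tilde\lambda_{2m}=(1-q^m)(1-q^{m+1})$, so that $G_{2n+2}(q) = (1-q)^{-2n}[z^n]S_{\tilde\lambda}(z)$. Lemma~\ref{lem2bis} then reformulates \eqref{eq:GY} as the identity $[z^k]T_{\tilde\lambda}(z) = Y_k$ for every $k\geq 0$, which is what I would aim to prove. The polynomiality of $Y_k$ follows a posteriori from this identity (since $T_{\tilde\lambda}$ has polynomial coefficients); it can also be seen directly from $P_k(1)=\sum_{i=0}^k(-1)^i(2i+1)=(-1)^k(k+1)$, which gives $(P_{k-1}+2P_k+P_{k+1})|_{q=1}=0$, hence $(q-1)$ divides the numerator.

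Next I would derive a functional equation for $H(z):=\sum_{k\geq0}Y_kz^k$. Separating the $i=k+1$ term in the definition of $P_{k+1}$ yields the recurrence $P_{k+1}=q^{k+1}P_k+(-1)^{k+1}(2k+3)$, which, after summing against $z^{k+1}$, translates to $P(z)-qzP(qz)=(1-z)/(1+z)^2$ where $P(z):=\sum_k P_kz^k$. Combining this with the identity $(1+z)^2P(z)=1+z(q-1)H(z)$ (immediate from the definition of $Y_k$, using $P_{-1}=0$), a short algebraic manipulation produces
\begin{equation*}
(1+qz)^2 H(z) - q^2 z(1+z)^2 H(qz) = 1 - qz^2. \tag{$\star$}
\end{equation*}
Matching the coefficient of $z^n$ in $(\star)$ gives a recursion that determines $H(z)$ uniquely from $H(0)=Y_0=1$.

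To show $T_{\tilde\lambda}(z)$ satisfies $(\star)$, rewrite $(\star)$ as the Möbius identity $T_{\tilde\lambda}(z) = S[T_{\tilde\lambda}(qz)]$ with $S=\left(\begin{smallmatrix} q^2z(1+z)^2 & 1-qz^2 \\ 0 & (1+qz)^2 \end{smallmatrix}\right)$, and follow the template of Lemma~\ref{funeq1}. Group two consecutive levels of the T-fraction into a single $2\times 2$ Möbius matrix $M_m(z)$ depending on $(q^m,q^{m+1})$, so that $T_{\tilde\lambda}(z)=M_1(z)M_2(z)\cdots[0]$ as an infinite matrix product (the first matrix $M_1$ absorbing the outermost $\frac{1}{1+z-\cdot}$). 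For each $n$ define
\[
\Omega_n = M_n(z)^{-1}\cdots M_1(z)^{-1}\, S\, M_1(qz)\cdots M_n(qz),
\]
which satisfies the recurrence $\Omega_{n+1}=M_{n+1}(z)^{-1}\Omega_n M_{n+1}(qz)$ with $\Omega_0=S$. Guess an explicit closed form for $\Omega_n$ from a few small cases, verify it by induction, and crucially check that $\Omega_n[0]$ is a genuine formal power series in $z$ (i.e.\ has no pole at $z=0$). Lemma~\ref{lemconv} then guarantees that the partial products $M_1(z)\cdots M_n(z)[\Omega_n[0]]$ converge to $T_{\tilde\lambda}(z)$, and passing to the limit in the definition of $\Omega_n$ yields $T_{\tilde\lambda}(z)=S[T_{\tilde\lambda}(qz)]$, i.e.\ $(\star)$. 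Combined with the uniqueness noted above, we conclude $H(z)=T_{\tilde\lambda}(z)$ and the theorem follows.

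The main obstacle is the explicit determination of $\Omega_n$, analogous to Lemma~\ref{lem_omega}. Here each matrix $M_m(z)$ depends on the two parameters $q^m,q^{m+1}$ (rather than the single $q^{2m-2}$ as in the triple-product case), so the matrices and the recurrence for $\Omega_n$ are slightly more intricate than in Lemma~\ref{lem_omega}. Nevertheless, the verification is mechanical in nature, and one expects a closed form for $\Omega_n$ amenable to direct algebraic check, exactly as in the proof of Lemma~\ref{funeq1}.
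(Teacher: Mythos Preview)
Your proposal is correct and follows essentially the same route as the paper's proof. Two cosmetic differences are worth noting. First, instead of working directly with $H(z)=\sum Y_kz^k$ and your equation $(\star)$, the paper passes to $F(z)=(1+z)^{-2}\bigl(1+z(q-1)H(z)\bigr)$, which satisfies the slightly cleaner $F(z)=\tfrac{1-z}{(1+z)^2}+qzF(qz)$; a short computation shows this is equivalent to your $(\star)$, and the corresponding Möbius matrix $S$ factors as $P(z)^{-1}RP(qz)$ for simple matrices $P,R$. Second, your worry that each $M_m(z)$ genuinely depends on two parameters $q^m,q^{m+1}$ is unfounded: because the two levels carry $(1-q^m)^2$ and $(1-q^m)(1-q^{m+1})$, the paper parametrizes the block by the single variable $w=q^{m-1}$ via $N(w,z)$, and the resulting $\Lambda_n$ (the paper's analogue of your $\Omega_n$) has a closed form of exactly the same complexity as in Lemma~\ref{lem_omega}.
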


\begin{remark} It is possible to rewrite this formula directly in terms of
  $P_k$. Denoting $C_n$ the $n$th Catalan number
  $\frac{1}{n+1}\binom{2n}{n}$, this is such that for any $n\geq1$ we have
\begin{equation} \label{qgenoPk}
  G_{2n}(q) = \frac{1}{(q-1)^{2n-1}} \left(  C_{n-1} + \sum_{k=0}^{n}
   \left(\binom{2n}{n-k} - \binom{2n}{n-k-1}\right) P_k    \right).
\end{equation}
\end{remark}

\bigskip

\begin{proof}
Let $\lambda'=\{\lambda'_n\}_{n\geq0}$ where $\lambda'_n=(1-q)^2\lambda_n$.
If we substitute $z$ with $(1-q)^2z$ in \eqref{gen_frac}, the resulting S-fraction in the right-hand side
is $S_{\lambda'}(z)$. Apply Lemma~\ref{lem2bis} to this S-fraction, it follows that
\begin{equation*}
   (1-q)^{2n} G_{2n+2}(q) =  \sum_{k=0}^n
   \left(\binom{2n}{n-k} - \binom{2n}{n-k-1}\right) W_k
\end{equation*}
where the sequence $\{W_k\}_{k\geq0}$ is defined by
\begin{equation} \label{W_frac}
   \sum_{k=0}^{\infty} W_k z^{k} = T_{\lambda'}(z).
\end{equation}
It remains only to show that $Y_k=W_k$, and it will be done by showing that the
generating functions satisfy a common functional equation. To this end, we can first
obtain the generating function of the sequence $\{P_k\}_{k\geq0}$:
\begin{equation*}\begin{split}
 \sum_{k=0}^\infty P_kz^k &= \sum_{0\leq i\leq k} (-1)^i(2i+1)q^{\binom{k+1}2-\binom{i+1}2}z^k \\
 &= \sum_{0\leq i,j} (-1)^i(2i+1)q^{ \frac{j(j+2i+1)}2 }z^{i+j} = \sum_{j=0}^{\infty} \frac{1-zq^j}{(1+zq^j)^2}q^{\binom{j+1}2}z^j,
\end{split}\end{equation*}
where we have introduced the index $j=k-i$ and used the expansion
\begin{equation*}
  \sum_{i=0}^{\infty} (2i+1) x^i = \frac{1+x}{(1-x)^2}.
\end{equation*}
To derive the generating function of $\{Y_k\}_{k\geq0}$, note that the natural extension when $k<0$ is such that
$Y_k=0$ if $k<-1$, and $Y_{-1}=P_0/(q-1)$ where $P_0=1$. It is practical to take into account this $Y_{-1}$,
and we have:
\begin{equation*}
  (q-1) \sum_{k=0}^{\infty} Y_{k-1} z^k = \sum_{k=0}^{\infty} (P_{k-2} + 2P_{k-1} + P_k)z^k
  = (1+z)^2 \sum_{k=0}^{\infty} P_k z^k.
\end{equation*}
We thus obtain:
\begin{equation} \label{Q_gen}
1 + z(q-1)\sum\limits_{k=0}^\infty Y_kz^k
= (1+z)^2 \sum_{k=0}^\infty \frac{1-zq^k}{(1+zq^k)^2} q^{\binom {k+1}2} z^k.
\end{equation}
As a consequence, if we define $F(z)$ as
\begin{equation}  \label{deffz}
F(z) = \frac{1}{(1+z)^2} \left( 1 + z(q-1)\sum\limits_{k=0}^\infty Y_kz^k \right)
= \sum_{k=0}^\infty \frac{(1-zq^k)}{(1+zq^k)^2} q^{\binom {k+1}2} z^k,
\end{equation}
then it is the unique formal power series in $z$ such that
\begin{equation*}
F(z) = \frac{1-z}{(1+z)^2} + qzF(qz).
\end{equation*}
As in the case of $H(z)$ previously seen, the uniqueness comes from the fact that the previous equation
gives a relation between $a_n$ and $a_{n-1}$ if $a_n$ is the coefficient of $z^n$ in $F(z)$,
hence these coefficients are determined by $a_0=1$.

At this point, it remains only to show that the series
\begin{equation*}
   G(z)  =  \frac{1}{(1+z)^2} \big( 1 + z(q-1) T_{\lambda'}(z) \big)  = \frac{1}{(1+z)^2} \left( 1 + z(q-1)\sum\limits_{k=0}^\infty W_kz^k \right)
\end{equation*}
satisfies the same functional equation as $F(z)$. This is done in a separate lemma below,
and completes the proof since it follows that $F(z)=G(z)$ and $Y_k=W_k$.
\end{proof}

\begin{lem} \label{funeqGlem}
Let us define
\begin{equation*}
G(z) = \frac{1}{(1+z)^2} \big( 1 + z(q-1) T_{\lambda'}(z) \big)    
\end{equation*}
with $\lambda'$ defined as above, then the series $G(z)$ satisfies
\begin{equation} \label{funeqG}
G(z) = \frac{1-z}{(1+z)^2} + qzG(qz).
\end{equation}
\end{lem}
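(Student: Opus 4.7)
The plan is to mimic the proof of Lemma~\ref{funeq1}: reformulate \eqref{funeqG} as an identity on the continued fraction $T_{\lambda'}(z)$, realize this identity via an equality of infinite matrix products acting as Möbius transformations, and prove it using an explicit intertwining matrix.

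First I reduce the equation to one for $T_{\lambda'}(z)$. Substituting $G(z) = (1+z)^{-2}\bigl(1 + z(q-1)T_{\lambda'}(z)\bigr)$ into \eqref{funeqG}, clearing denominators, and applying the identity $q(1+z)^2 - (1+qz)^2 = (q-1)(1-qz^2)$, one checks that \eqref{funeqG} is equivalent to
\[
(1+qz)^2 T_{\lambda'}(z) - q^2 z(1+z)^2 T_{\lambda'}(qz) = 1 - qz^2,
\]
which, read as a Möbius transformation, is $T_{\lambda'}(z) = S[T_{\lambda'}(qz)]$ with
\[
S = \begin{pmatrix} q^2 z(1+z)^2 & 1-qz^2 \\ 0 & (1+qz)^2 \end{pmatrix}.
\]

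Next I set up a matrix product for $T_{\lambda'}(z)$. Since $\lambda'_{2m-1} = (1-q^m)^2$ and $\lambda'_{2m} = (1-q^m)(1-q^{m+1})$ share the factor $(1-q^m)$, it is natural to pair consecutive partial quotients. Let
\[
M(w,z) = \begin{pmatrix} -(1-w)^2 z & (1-w)^2 z(1+z) \\ -(1+z) & (1+z)^2 - (1-w)(1-wq)z \end{pmatrix}
\]
be the composition of the two single-step Möbius matrices for $\lambda'_{2m-1}, \lambda'_{2m}$ evaluated at $w = q^m$, and let
\[
M_0(z) = \begin{pmatrix} 0 & 1 \\ -1 & 1+z \end{pmatrix}
\]
encode the opening fraction $1/(1+z)$. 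Then $T_{\lambda'}(z) = M_0(z) M(q,z) M(q^2,z)\cdots[0]$, the partial products converging in the $z$-adic topology. Imitating Lemma~\ref{lem_omega}, define the intertwining matrices
\[
\Omega_n = \bigl(M_0(z) M(q,z) \cdots M(q^n,z)\bigr)^{-1} S\, M_0(qz) M(q,qz) \cdots M(q^n,qz),
\]
so $\Omega_0 = M_0(z)^{-1} S M_0(qz)$ and $\Omega_{n+1} = M(q^{n+1},z)^{-1} \Omega_n M(q^{n+1},qz)$.

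The core task is to guess, and verify by induction from this recursion, an explicit closed rational form for $\Omega_n$; the essential property needed is that $\Omega_n[0]$ is a formal power series in $z$ (not a Laurent series with a pole at $z=0$) for every $n\geq 0$. A direct calculation at $n=0$ is encouraging: it yields
\[
\Omega_0[0] = \frac{(1-q)^2 z(1-qz^2)}{1 + q(1+q)z + q(2q-1)z^2} \in \mathbb{Q}(q)[[z]],
\]
supporting the expectation that the pattern persists. Once a closed form is in hand, the equality
\[
M_0(z) M(q,z) \cdots M(q^n,z)\bigl[\Omega_n[0]\bigr] = S\, M_0(qz) M(q,qz) \cdots M(q^n,qz)[0]
\]
between modified approximants, together with Lemma~\ref{lemconv} (valid precisely because $\Omega_n[0]$ is a power series in $z$), lets us take $n\to\infty$ and obtain $T_{\lambda'}(z) = S[T_{\lambda'}(qz)]$, which is equivalent to \eqref{funeqG}.

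The main obstacle is finding and verifying the closed form for $\Omega_n$. This is a mechanical but bulky matrix identity of exactly the same nature as Lemma~\ref{lem_omega}: once a candidate is guessed from the $n=0,1$ cases, the recursion reduces the verification to a polynomial identity in $q$ and $z$ that can be checked by direct expansion or with computer algebra, after which every other step in the argument is routine.
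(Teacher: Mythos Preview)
Your approach is essentially the paper's. The paper also realizes the functional equation as an identity of infinite matrix products for $T_{\lambda'}(z)$, defines an intertwining matrix by the same conjugation recursion, and then appeals to Lemma~\ref{lemconv} once it is known that evaluating this matrix at $0$ yields a genuine power series. The bookkeeping is arranged a little differently: the paper keeps the equation at the level of $G$ via two auxiliary matrices $P(z)$ and $R$ (so that the intertwiner is $\Lambda_n = N(q^{n-1},z)^{-1}\cdots N(1,z)^{-1}P(z)^{-1}R\,P(qz)N(1,qz)\cdots N(q^{n-1},qz)$), whereas you first reduce to a single Möbius $S$ acting on $T_{\lambda'}$ and split off an initial $M_0$. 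These differences are cosmetic; your $\Omega_n$ and the paper's $\Lambda_n$ are related by fixed outer conjugations and scalars.

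The one substantive point is that the paper does not merely assert that a closed form for the intertwiner exists: it records it explicitly (their Lemma~\ref{lem_lambda}), namely
\[
\Lambda_n=\begin{pmatrix}
(q^{n+1}+2zq^{n+1}+q^{n}+z^2q-1)\,qz & 1-qz^2\\
(1-q^{n})(1-q^{n+1})(qz^2-1)\,qz & q^{n+2}z^2+q^{n+1}z^2+2q^{n+1}z+1-qz^2
\end{pmatrix},
\]
so that $\Lambda_n[0]=\dfrac{1-qz^2}{q^{n+2}z^2+q^{n+1}z^2+2q^{n+1}z+1-qz^2}$ is visibly a power series for every $n$. In your write-up this step is left as ``guess from $n=0,1$ and verify by recursion''; since the entire argument hinges on that regularity at $z=0$ for all $n$, you should supply the analogue of this formula (or simply translate the paper's $\Lambda_n$ through your $M_0$ and $S$) before the proof is complete. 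Your computation of $\Omega_0[0]$ is correct and is the right sanity check, but it does not by itself control $\Omega_n[0]$ for general $n$.
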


\begin{proof}
We follow the same scheme as in the proof of Lemma~\ref{funeq1}. We have
\begin{equation*}\begin{split}
       \tfrac{1}
     {1 +z - \tfrac{(1-wq)^2z}
     {1 +z - (1-wq)(1-wq^2)zX}} =
\tfrac{ (1-wq)(1-wq^2)zX-(1+z) }{ (1-wq^2)(1-wq)(1+z)zX + (1-wq)^2z- (1+z)^2 },
\end{split}\end{equation*}
so we can introduce the matrix
\begin{equation*}
N(w,z) = \begin{pmatrix}
(1-wq)(1-wq^2)z & -(1+z) \\[2mm]
(1-wq)(1-wq^2)(1+z)z & (1-wq)^2z- (1+z)^2 \\
\end{pmatrix},
\end{equation*}
and it follows that the continued fraction $T_{\lambda'}(z)$ can
be written as an infinite product in the following way:
\begin{equation} \label{prod1}
T_{\lambda'}(z) =  N(1,z) N(q,z) N(q^2,z) \dotsm .
\end{equation}
Furthermore, let us define two other matrices:
\begin{equation*}
P(z)= \begin{pmatrix}
(q-1)z & 1       \\[2mm]
   0   & (1+z)^2 \\
\end{pmatrix},\qquad
R = \begin{pmatrix}
qz(1+z)^2 & 1-z  \\[2mm]
   0   & (1+z)^2 \\
\end{pmatrix}.
\end{equation*}
We have then
\begin{equation*}
  G(z) = P(z)  N(1,z) N(q,z) N(q^2,z) \dotsm.
\end{equation*}
Using matrices, the functional equation that we have to prove can be written:
\begin{equation} \label{funeqm}
 P(z) N(1,z) N(q,z) N(q^2,z) {\dotsm} = R P(qz) N(1,qz) N(q,qz) N(q^2,qz) \dotsm .
\end{equation}
As in the case of Lemma~\ref{funeq1} and $\Omega_n$, we can form a product of matrices that behaves nicely. Let
\begin{equation} \label{def_lambda}
\Lambda_n = N(q^{n-1},z)^{-1} \dotsm  N(1,z)^{-1} P(z)^{-1} R P(qz) N(1,qz) \dotsm N(q^{n-1},qz),
\end{equation}
its explicit form being given separately in Lemma~\ref{lem_lambda} below. Let $w_n=\Lambda_n [0] $,
from the explicit form in the lemma we have
\begin{equation*}
w_n = \frac{1-qz^2}{q^{n+2}z^2+q^{n+1}z^2+2q^{n+1}z+1-qz^2}.
\end{equation*}
The important point is that $w_n$ has no term of negative degree in $z$ in its expansion, and we can
apply Lemma~\ref{lemconv}. From the definition of $\Lambda_n$, we have:
\begin{equation} \label{matrixeq}
  P(z) N(1,z) \dotsm N(q^{n-1},z) [ w_n ]  = R P(qz) N(1,qz) \dotsm N(q^{n-1},qz) [ 0 ] ,
\end{equation}
and letting $n$ tend to infinity in the previous identity proves the functional equation \eqref{funeqm}
which was just a rewriting of \eqref{funeqG}.
\end{proof}

\begin{lem} \label{lem_lambda}
We have
\begin{equation}\label{lambda_is}
  \Lambda_n
  =\left(\begin{mmatrix}
    (q^{n+1}+2zq^{n+1}+q^{n}+z^2q-1)qz  &  1-qz^2   \\
    (1-q^{n})(1-q^{n+1})(qz^2-1)qz            &  q^{n+2}z^2+q^{n+1}z^2+2q^{n+1}z+1-qz^2 \\
  \end{mmatrix}\right).
\end{equation}
\end{lem}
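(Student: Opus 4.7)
The proof follows the same recursive-verification strategy that the authors use for Lemma~\ref{lem_omega}. Directly from the definition \eqref{def_lambda} one has
\begin{equation*}
\Lambda_0 = P(z)^{-1} R\, P(qz), \qquad \Lambda_{n+1} = N(q^n,z)^{-1} \Lambda_n N(q^n, qz) \quad \text{for } n \geq 0.
\end{equation*}
It therefore suffices to check that the matrix on the right-hand side of \eqref{lambda_is} satisfies both this base case and the one-step recurrence; by induction it must then coincide with $\Lambda_n$ for every $n \geq 0$.

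The base case is a direct $2\times 2$ calculation. Since $P(z)$, $R$ and $P(qz)$ are all upper triangular, the product $P(z)^{-1} R\, P(qz)$ is upper triangular as well, which already matches the factor $(1-q^n)$ in the $(2,1)$ entry of \eqref{lambda_is} at $n=0$. Expanding the remaining entries, one gets $(1,1)$ equal to $q^2 z(1+z)^2$, $(2,2)$ equal to $(1+qz)^2 = q^2 z^2 + 2qz + 1$, and $(1,2)$ equal to $\bigl(q(1+z)^2 - (1+qz)^2\bigr)/(q-1) = 1-qz^2$, each of which agrees with the $n=0$ specialization of \eqref{lambda_is}.

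The inductive step is a computation of the product $N(q^n,z)^{-1}\Lambda_n N(q^n,qz)$ and a term-by-term comparison with the right-hand side of \eqref{lambda_is} with $n$ replaced by $n+1$. This reduces to an identity between rational functions in $q$, $q^n$ and $z$: each entry of the product expands into a sum of at most four products of three polynomials, and the required simplifications are elementary, for instance replacing $(1-q^{n})(1-q^{n+1})$ by $(1-q^{n+1})(1-q^{n+2})$ and factoring common multiples of $(1+z)$ and $(1-qz^2)$.

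The main obstacle is exactly the one flagged by the authors for Lemma~\ref{lem_omega}: the intermediate expressions undergo a small combinatorial explosion, so that a paper-and-pencil verification is tedious though never conceptually difficult. In practice the cleanest way to complete the argument is to expand both sides of the recurrence in a computer algebra system and observe that they agree identically; no new idea beyond what was used for $\Omega_n$ is required.
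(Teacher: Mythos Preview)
Your proof is correct and follows exactly the same approach as the paper's own proof: verify the base case $\Lambda_0 = P(z)^{-1} R\, P(qz)$ directly, then invoke the recurrence $\Lambda_{n+1} = N(q^n,z)^{-1}\Lambda_n N(q^n,qz)$ coming from the definition \eqref{def_lambda}, referring to a computer algebra check for the tedious inductive step. The paper likewise omits the details and points back to the analogous computation for $\Omega_n$.
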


\begin{proof}
As a consequence of the definition of $\Lambda_n$ in \eqref{def_lambda}, we have the recurrence relation:
\begin{equation*}
  \Lambda_{n+1} = N(q^n,z)^{-1} \Lambda_{n} N(q^n,qz).
\end{equation*}
It is possible to check that the expression in the right-hand side of \eqref{lambda_is} satisfies this relation and coincides
with $\Lambda_0$ when $n=0$. All comments concerning the calculation of $\Omega_n$ in Lemma~\ref{lem_omega}
and its proof apply as well in this case, so once again we omit the voluminous details and suggest the help of a
computer for a verification of the expression in \eqref{lambda_is}.
\end{proof}

\begin{remark}
The reader might have noticed that the right-hand side of \eqref{deffz} is a basic
hypergeometric series, more precisely a ${}_4\phi_4 $ in the notation of \cite{GR}.
There are established methods to prove continued fraction expansions of such series,
using {\it contiguous relations}. One of the most general result of this kind is
by Masson \cite{masson}. In the present case, we did not see how to apply or adapt these
known results, and instead had to use the fact that our series satisfy a simple
functional equation.
\end{remark}

\section{\texorpdfstring{Connection with Jacobi's identity for $(q;q)_\infty^3$}{Connection with Jacobi's identity for (q;q)³}}

Using the argument as in the first part of this article, we show here that from
Theorem~\ref{qgeno} we can obtain the following identity, due to Jacobi:
\begin{equation}\label{eq:cubed}
  \prod_{i\geq1} (1-q^i)^3 = \sum_{i=0}^{\infty} (-1)^i(2i+1) q^{\frac{i(i+1)}{2}}.
\end{equation}

Let $\Gg_1 = (-q,-q^2,-q^2,-q^3,-q^3,\dots)$ and $\Gg_2 =
(-q,-q,-q^2,-q^2,\dots)$.
We have shown in the previous section that $Y_k=W_k$ where the generating function of $W_k$
is a T-fraction. The lattice path interpretation of this T-fraction shows that
\[ 
Y_k(q) = \sum_{p\in\MD^*_{k}} \wt(p;\Gg_1,\Gg_2).
\]

We can reformulate $Y_k(q)$ using \dkc.s. For $(\lambda,A)\in \Delta_k^+$, we
define $\wt'_{q}(\lambda,A)$ to be
\[
\wt'_{q}(\lambda,A) = q^{|\lambda|} \prod_{u\in A} (-q^{g(u)}),
\]
where $g(u)=1+\flr{|u|/2}$ if $u$ is a horizontal arrow and $g(u)=\lceil |u|/2 \rceil$
if $u$ is a vertical arrow.  Recall the bijection between $\MD^*_{k}$ and
$\Delta_k^+$ explained in Section~\ref{sec:dk_config}. It is not difficult to
see that if $p\in\MD^*_{k}$ corresponds to $(\lambda,A)\in \Delta_k^+$, then
\[
\wt(p;\Gg_1,\Gg_2) = q^{\binom{k+2}{2} -1} \wt'_{q^{-1}}(\lambda,A),
\]

Thus
\begin{equation}
  \label{eq:11}
\sum_{C\in \Delta_k^+} \wt'_{q}(C)
= q^{\binom{k+2}{2} -1} Y_k(q^{-1}).
\end{equation}

Using the same argument sending $k\to\infty$ as we did in
Section~\ref{sec:limit}, we get
\begin{equation}
  \label{eq:9}
\lim_{k\to\infty} \sum_{C\in \Delta_k^+} \wt'_q(C) = \prod_{i\geq1}
\frac{1}{1-q^i} \prod_{i\geq1} (1-q^i)^2 \frac{1}{1-q} \prod_{i\geq1} (1-q^i)^2
=\frac{1}{1-q} \prod_{i\geq1} (1-q^i)^3.
\end{equation}

Using \eqref{eq:11} and \eqref{eq:9}, we have
\begin{align*}
\prod_{i\geq1} (1-q^i)^3  
&= \lim_{k\to\infty} (1-q) \sum_{C\in \Delta_k^+} \wt'_q(C)
= \lim_{k\to\infty} (1-q) q^{\binom{k+2}{2} -1} Y_k(q^{-1})\\
&= \lim_{k\to\infty} (q^{-1}-1) q^{\binom{k+2}{2}}
 \frac{ P_{k-1}(q^{-1}) + 2P_{k}(q^{-1}) + P_{k+1}(q^{-1}) } {q^{-1}-1}\\
&=\lim_{k\to\infty} \sum_{i=0}^k (-1)^i(2i+1) q^{\binom{i+1}{2}} 
\left(
q^{\binom{k+2}{2} - \binom{k}{2}} + 2 q^{\binom{k+2}{2} - \binom{k+1}{2}} 
+q^{\binom{k+2}{2} - \binom{k+2}{2}} 
\right)\\
&=\sum_{i=0}^\infty (-1)^i(2i+1) q^{\binom{i+1}{2}}.
\end{align*}
This finishes the proof of \eqref{eq:cubed}.

%%%%%%%%%%%%%%%%%%%%%%%%%%%%%%%%%%%%%%%%%%%%%%%%%%%%%%%%%%%

\part*{Final remarks and open problems}

\begin{remark}
As mentioned in the introduction, Touchard-Riordan--like formulas appear as moments of orthogonal polynomials.
It can be checked that the quantities considered in this article are related with the so-called {\it continuous dual
$q$-Hahn polynomials}, denoted $p_n(x;a,b,c|q)$. This is a family of polynomials in $x$ depending on four parameters
$a$, $b$, $c$, and $q$, see \cite{koekoek} for the definition and let $\mu_n(a,b,c|q)$ denote the $n$th
moment of this sequence. The moment generating function of orthogonal polynomials has a continued
fraction expansion (as a J-fraction) where the parameters are simply related with the coefficients in the three-term
recurrence relation, see \cite[Chapter 7]{aigner}. Using the first contraction in Lemma~\ref{contractions}, we can
always transform an S-fraction into a J-fraction, so that it is in theory straightforward to identify our S-fractions and
moment generating functions. Note also that the binomial transform $a^{-n} \sum_{k=0}^n  \binom nk  (-b)^{n-k} \mu_k $
gives the moments of the rescaled polynomials $p_n(ax+b)$. For the continued fraction in \eqref{fjtp}, the result is:
\begin{equation} \label{cdqh1}
\sum_{k=0}^n \left(\tbinom{2n}{n-k}-\tbinom{2n}{n-k-1}  \right) \sum_{j=-k}^{k} y^j q^{k(k+1)-j^2}
= 2^n \sum_{k=0}^n \binom nk (-1)^k  \mu_k(1,-yq,-y^{-1}q|q^2),
\end{equation} 
so that this quantity is the $n$th moment for the orthogonal polynomials
$p_n(1-\frac x2;1,-yq,-y^{-1}q|q^2)$. As for the $q$-Genocchi numbers, we have:
\begin{equation} \label{cdqh2}
  G_{2n+2}(q) = \frac{2^n}{(1-q)^{2n}} \sum_{k=0}^n \binom nk \mu_k(-q,-q,-q|q),
\end{equation}
so that $(1-q)^{2n}G_{2n+2}(q)$ is the $n$th moment for the orthogonal polynomials
$p_n(\frac x2-1;-q,-q,-q|q)$.
These continuous dual $q$-Hahn polynomials are the special case $d=0$ in the
sequence of Askey-Wilson polynomials \cite{koekoek}. A closed formula
for the moments $\mu_n(a,b,c|q)$ was given at the end of \cite{corteel2}, in the form
of a sum over four indices containing three $q$-binomial coefficients. It might be possible
to obtain our formulas by using the one from \cite{corteel2} and simplifying the right-hand
sides of \eqref{cdqh1} and \eqref{cdqh2}, but this would surely give rise to lengthy calculations.
\end{remark}

\begin{remark}
A general method for proving a continued fraction expansion is to use Hankel determinants
(this is related with orthogonal polynomials, see \cite{aigner,cuyt} for example).
Let $\{m_n\}_{n\geq 0}$ be a sequence with $m_0=1$, let $M_n$ be the matrix with
coefficients $(m_{i+j})_{0\leq i,j \leq n-1}$, and $M'_n$ with coefficients $(m_{i+j+1})_{0\leq i,j \leq n-1}$.
Then, provided that all the determinants are non-zero, we have $\sum_{n=0}^\infty m_n z^n= S_\lambda(z)$ 
where for any $n\geq 1$, 
\begin{equation*}
   \lambda_{2n-1} = \frac{ \det(M'_n) \det(M_{n-1})  }{  \det(M_n ) \det(M'_{n-1}) }, \qquad
   \lambda_{2n} = \frac{ \det(M'_{n-1}) \det(M_{n+1})  }{  \det(M_n ) \det(M'_{n}) }.
\end{equation*}
See \cite{jones}. When $m_n$ is equal to the left-hand side of \eqref{cdqh1}, it follows that
Theorem~\ref{thm:main} is equivalent to the evaluations:
\begin{align*}
  \det(M_n) &= \prod_{i=1}^{n-1}  \big(  (1+yq^{2i-1})(1+y^{-1}q^{2i-1})(1-q^{2i})^2 \big)^{n-i},  \\
  \det(M'_n) &= \prod_{i=1}^{n}  \big(  (1+yq^{2i-1})(1+y^{-1}q^{2i-1}) \big)^{n+1-i}   (1-q^{2i})^{2(n-i)},
\end{align*}
which might be proved by examining the vanishing locus. Similarly, there is a product-form for these determinants in
the case where $m_n=G_{2n+2}(q)$. However, it is unclear whether proving these evaluations could be simpler than
our method with the functional equation.
\end{remark}

\begin{remark}
We have seen throughout this work that the crucial property of T-fractions (as opposed to S-fractions) is that they
satisfy some functional equations linking $T_\lambda(z)$ and $T_\lambda(zq)$ or $T_\lambda(zq^2)$. We have no
enlightening explanation for this property, but we have a relevant observation. Consider the tails of the continued fraction
$T_\lambda(z)$, where the $m$th tail is the T-fraction associated with the shifted sequence $\{\lambda_n\}_{n\geq m}$.
In the various cases we have considered, $\lambda_n$ tends to $1$ as $n$ tends to $\infty$, and consequently the
tails converge to
\begin{equation*}
\frac{1}{1+z} \cmo \frac{z}{1+z} \cmo \frac{z}{1+z} \cmo \frac{z}{1+z} \cmo \dotsb .
\end{equation*}
But this is also equal to
\begin{equation*}
\frac{1}{1+qz} \cmo \frac{qz}{1+qz} \cmo \frac{qz}{1+qz} \cmo \frac{qz}{1+qz} \cmo \dotsb ,
\end{equation*}
for the simple reason that both continued fractions are equal to 1. In some sense, the substitution
$z\to zq$ has virtually no effect on the tails, and this is a clue to the fact that some T-fractions satisfy
functional equations.
\end{remark}

\begin{prob}
Some other results in the style of the triple product identify are \cite{berndt}:
\begin{align}   \label{berndt1}
  \sum_{j=-\infty}^\infty (3j+1) q^{3j^2+2j} &= \prod_{i\geq1} (1-q^i)(1-q^{2i-1})(1-q^{4i}), \\
  \sum_{j=-\infty}^\infty (6j+1) q^{3j^2+j}   &= \prod_{i\geq1} (1-q^{2i})^3(1-q^{4i-2})^2, \label{berndt2}
\end{align}
and it is natural to ask if some continued fractions are related. For example, if we define
\begin{equation*}
 \sum_{n=0}^\infty \xi_n(q) z^n = T_\lambda(z),
\quad \hbox{where} \quad
\begin{cases}
  \lambda_{3n+1} = (1-q^{2n+1})^3, \\
  \lambda_{3n+2} = (1-q^{2n+1})(1-q^{2n+2})^2, \\
  \lambda_{3n+3} = (1-q^{2n+2})^2(1-q^{2n+3}), \\
\end{cases}
\end{equation*}
then using a limit argument in paths (somewhat similar to what we did on $\delta_k$-configurations), it is possible to show that
\begin{equation*}
 \lim_{k\to\infty}  q^{2k(k+2)} (-1)^k (1-q^2) \xi_k(q^{-2}) = \prod_{i\geq1} (1-q^{2i})^3(1-q^{4i-2})^2,
\end{equation*}
which is the right-hand side of \eqref{berndt2}.
However we do not find any formula for $\xi_k(q)$. The problem is to find some results
in the style of our finite version of the triple product identify, related to \eqref{berndt1} or \eqref{berndt2}.
These two identities seem more likely to be treated rather than results such as
the ${}_1\Psi_1$ summation formula or the quintuple product \cite{berndt}.
\end{prob}

\begin{prob}
Is there a bijective proof showing that $\sum_{j=-k}^k y^j q^{j^2}$ counts $\delta_k$-configurations with the
$(y,q)$-weight? Going through the functional equation as in the proof of Theorem~\ref{jtp_th} is somewhat
unsatisfactory, mainly because there is no enlightening proof  of the fact that the T-fraction satisfies this
equation. The case $y=-1$ treated in the first part of this article was already quite involved, but maybe
some new ideas can give the general case.
\end{prob}

\begin{prob}
Can we find a good combinatorial model of $\mu_n(a,b,q)$ from which both the ordinary generating function and the
exponential one (as in Theorem~\ref{theoS}) can be derived combinatorially? It seems difficult in such generality.
For example,  little is known about the combinatorics of integers $\mu_n(a,b,1)$ when $a$ and $b$ are half-integers.
\end{prob}

\bigskip

\noindent
{\bf Acknowledgement}

\noindent
This work started when both authors were in Paris in the group of Sylvie Corteel.
We thank her for discussions and encouragements.

\renewcommand*{\refname}{}

\part*{References} 

\vspace{-7mm}

\bibliographystyle{plain}

\begin{thebibliography}{ccc}

\bibitem{aigner}
 M. Aigner, A course in enumeration, Springer, 2007.

\bibitem{alladi}
K. Alladi and A. Berkovich, New polynomial analogues of Jacobi's triple product and Lebesgue's identities,
Adv. in Appl. Math. 32 (2004), 801--824

\bibitem{berndt}
 B. Berndt, Number theory in the spirit of Ramanujan, American Mathmetical Society, 2006.

\bibitem{BJS}
 A. Burstein, M. Josuat-Verg\`es and W. Stromquist, New Dumont permutations, preprint (2010).

\bibitem{comtet}
 L. Comtet, Advanced Combinatorics, Reidel Publishing Company, 1974.

\bibitem{corteel}
S. Corteel and J. Lovejoy, Overpartitions, Trans. Amer. Math. Soc. 356 (2004), 1623--1635.

\bibitem{corteel2}
S. Corteel, R. Stanley, D. Stanton, L.K. Williams, Formulae for Askey-Wilson moments and enumeration
of staircase tableaux, preprint (2010).

\bibitem{cuyt}
A. Cuyt, A. B. Petersen, B. Verdonk, H. Waadeland and W.B. Jones,
Handbook of continued fractions for special functions, Springer, 2008.

\bibitem{dumont}
 D. Dumont, Further triangles of Seidel-Arnold type and continued fractions related to
 Euler and Springer numbers, Adv. in App. Math. 16 (1995), 275--296.

\bibitem{flajolet}
 P. Flajolet and H. Prodinger, personal communication.

\bibitem{GR}
 G. Gasper and M. Rahman, Basic hypergeometric series, Cambridge University Press, 1990.

\bibitem{goulden}
 I.P. Goulden and D.M. Jackson, Combinatorial Enumeration, Wiley, 1983.

\bibitem{ismail}
 M.E.H. Ismail, D. Stanton and X. G. Viennot, The combinatorics of $q$-Hermite polynomials and the
 Askey-Wilson integral, European J. Combin. 8 (1987), 379--392.

\bibitem{jones}
 W.B. Jones and W.J. Thron, Continued Fractions: Analytic Theory and Applications,
 Addison-Wesley, 1980.

\bibitem{josuat1}
M. Josuat-Verg\`es, A $q$-enumeration of alternating permutations,
European J. Combin. 31 (2010), 1892--1906.

% \bibitem{josuat3}
% M. Josuat-Verg\`es, Combinatorics of the three-parameter PASEP partition function, preprint (2009).

\bibitem{josuat2}
M. Josuat-Verg\`es, \'Enum\'eration de tableaux et de chemins, moments de polyn\^omes orthogonaux,
PhD. thesis, Universit\'e d'Orsay, 2010.

\bibitem{rubey}
M. Josuat-Verg\`es and Martin Rubey, Crossings, Motzkin paths and moments, preprint (2010).

\bibitem{koekoek}
 R. Koekoek, P.A. Lesky and R.F. Swarttouw,
 Hypergeometric orthogonal polynomials and their $q$-analogues,
 Springer, 2010.

\bibitem{masson}
D.R. Masson, The last of the hypergeometric continued fractions, Contemp. Math. 190 (1995), 287--294.

\bibitem{penaud}
 J.-G. Penaud, Une preuve bijective d'une formule de Touchard-Riordan, Discrete Math. 139 (1995), 347--360.

\bibitem{read}
 R.C. Read, The chord intersection problem, Ann. N. Y. Acad. Sci. 139 (1979), 444--454.

\bibitem{roblet}
 E. Roblet and X.G. Viennot, Th\'eorie combinatoire des T-fractions et approximants
 de Pad\'e en deux points, Discrete Math. 153 (1996), 271--288.

\bibitem{riordan68}
 J. Riordan, Combinatorial identities, Wiley, 1968.

\bibitem{riordan}
 J. Riordan, The distribution of crossings of chords joining pairs of $2n$ points on a circle,
 Math. Comput. 29(129) (1975), 215--222.

\bibitem{schlosser}
 M. Schlosser,
 Abel-Rothe type generalizations of Jacobi's triple product identity, Dev. Math. 13 (2005), 383--400

\bibitem{stieltjes}
 T.J. Stieltjes, Sur quelques int\'egrales d\'efinies et leur d\'eveloppement en fractions continues,
 Quart. J. Math. 24 (1890), 370--382; \OE uvres compl\`etes 2, Noordhoff and Groningen, 1918, pp. 378--394.

\bibitem{touchard}
 J. Touchard, Sur un probl\`eme de configurations et sur les fractions continues,
 Canad. J. Math. 4 (1952), 2--25.

\bibitem{vahlen}
 K.T. Vahlen, Beiträge zu einer additiven Zahlentheorie,
 J. Reine Angew. Math. 112 (1893), 1--36.

\bibitem{viennot}
 X.G. Viennot, Interpr\'etations combinatoires des nombres d'Euler et de Genocchi,
 S\'eminaire de th\'eorie des nombres de l'Universit\'e Bordeaux I, 1981.

\bibitem{wall}
 H.S. Wall, Analytic theory of continued fractions, Van Nostrand, 1948.

\bibitem{warnaar}
 S.O. Warnaar, $q$-Hypergeometric proofs of polynomial analogues of the triple product identity, Lebesgue's
 identity and Euler's pentagonal number theorem, Ramanujan J. 8 (2005), 467--474.

\end{thebibliography}

\end{document}